\newcommand{\aff}{\mathrm{aff}}
\renewcommand{\sc}{\mathrm{sc}}
\newcommand{\OGr}{{\operatorname{OGr}}}
\newcommand{\Perm}{\mathrm{Perm}}
\newcommand{\Adm}{\mathrm{Adm}}
\newcommand{\sFl}{\mathscr{F}\!\ell}
\begin{document}

\title{Topological flatness of orthogonal spin local models}
\author{Jie Yang}
\address{Yau Mathematical Sciences Center, Tsinghua University, Haidian District, Beijing 100084, China}
\email{jie-yang@mail.tsinghua.edu.cn}

\begin{abstract}
     Let $p$ be an odd prime and $F$ be a complete discretely valued field with residue field of characteristic $p$. For any parahoric level structure of the split even orthogonal similitude group $\operatorname{GO}_{2n}$ over $F$, we prove a preliminary form of the Pappas--Rapoport flatness conjecture \cite[Conjecture 8.1]{pappas2009local}: the associated spin local model is topologically flat.  
\end{abstract}

\maketitle

\setcounter{tocdepth}{1}
\tableofcontents

\section{Introduction}
\subsection{}
Local models are projective schemes over the spectrum of a discrete valuation ring, that were first introduced to analyze the \etale-local structure of integral models of Shimura varieties. Rapoport and Zink \cite{rapoport1996period} constructed \dfn{naive local models} $\RM^\naive$ for Shimura varieties of PEL type via an explicit moduli interpretation. However, these naive local models are not always flat over the $p$-adic integers: Pappas \cite{pappas2000arithmetic}  first observed a failure of flatness in the case of ramified unitary groups, and Genestier later noted similar issues for split orthogonal groups. We refer to \cite{PRS13} for a comprehensive overview of the theory of local models.  

One remedy for the non-flatness of $\RM^\naive$ is to refine the (naive) moduli problem by imposing additional linear algebraic conditions, thereby cutting out a closed subscheme $\RM\sset\RM^\naive$ that is flat with the same generic fiber as $\RM^\naive$. 
Unlike the cases for symplectic groups and unitary groups (see \cite{gortz2001flatness,gortz2003flatness, pappas2000arithmetic, pappas2009local, smithling2015moduli, yu2019moduli, luo}, etc), local models for orthogonal groups have been less systematically investigated in the literature (but see \cite{smithling2011topological, he2020good, Zachos23}) for related results).

The subject of this paper is the case of PEL type D: local models for the split even orthogonal similitude group with \dfn{any} parahoric level structure.

Let $F$ be a complete discretely valued field with ring of integers $\CO$, uniformizer $\pi$, and residue field $k$ of characteristic $p> 2$. For an integer $n\geq 1$, set $V=F^{2n}$ and equip $V$ with a non-degenerate symmetric $F$-bilinear form $$\psi\colon V\times V\ra F.$$ Assume $\psi$ is split, i.e., $V$ admits an $F$-basis $(e_i)_{1\leq i\leq 2n}$ such that $\psi(e_i,e_j)=\delta_{i,2n+1-j}$. Denote by $$G\coloneqq \GO(V,\psi)$$ the (split) orthogonal similitude group associated with $(V,\psi)$. Let $G^\circ$ be the connected component of $G$ containing the identity element. 

Let $\CL$ be a self-dual periodic lattice chain of $V$ in the sense of \cite[Chapter 3]{rapoport1996period}. For each $\Lambda\in\CL$, we let $$\Lambda^\psi\coloneqq \cbra{x\in V\ |\ \psi(x,\Lambda)\sset \CO } $$ denote the dual lattice of $\Lambda$ with respect to $\psi$. Attached to $\CL$, one can define the naive local model $\RM^\naive_\CL$ following Rapoport--Zink \cite{rapoport1996period} (see Definition \ref{defn-naive} for the precise definition), via an explicit moduli functor. Then $\RM^\naive_\CL$ is a projective $\CO$-scheme, whose generic fiber is $\OGr(n,2n)_F$, the orthogonal Grassmannian of totally isotropic $n$-dimensional subspaces in $V$. In general, $\RM^\naive_\CL$ is not flat over $\CO$, see Genestier's example in \cite[\S 2.3]{PRS13}. In \cite[\S 7,8]{pappas2009local}, Pappas and Rapoport propose to add the \dfn{spin $\pm$-condition} to the moduli functor of $\RM^\naive_\CL$. The resulting moduli functor is representable by a closed subscheme $\RM^\pm_\CL$ of $\RM^\naive_\CL$ (see Definition \ref{defn-spin}). 

\begin{conj}[Pappas--Rapoport]  \label{conjPR} 
	The spin local model $\RM^\pm_\CL$ is flat over $\CO$. 
\end{conj}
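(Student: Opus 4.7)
The plan is to deduce Conjecture \ref{conjPR} from two separate statements: \emph{topological flatness} of $\RM^\pm_\CL$ (its special fiber equals set-theoretically the closure of the generic fiber) together with \emph{reducedness} of the special fiber $\overline{\RM}^\pm_\CL$. These two together imply scheme-theoretic flatness: the scheme-theoretic closure $X$ of the generic fiber inside $\RM^\pm_\CL$ is, by construction, a closed $\CO$-flat subscheme, and its special fiber $X_s$ is a closed subscheme of $\overline{\RM}^\pm_\CL$ with the same support by topological flatness. Since $\overline{\RM}^\pm_\CL$ is reduced, two closed reduced subschemes with the same support coincide, so $X_s=\overline{\RM}^\pm_\CL$ and hence $X=\RM^\pm_\CL$.

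First I would establish topological flatness by constructing the local model diagram embedding $\overline{\RM}^\pm_\CL$ into the twisted affine partial flag variety $\sFl_{G^\circ,\CL}$ associated with the parahoric stabilizer of $\CL$. The moduli description realizes the image as a closed union of affine Schubert varieties, and the objective is to identify this union with the admissible locus $\Adm^\pm(\mu)$ for the minuscule coweight $\mu=(1^n,0^n)$ of the orthogonal Grassmannian and the chosen spin sign. Since each admissible stratum lifts to a point of the generic fiber, the nontrivial direction is the opposite containment: one must verify that no Schubert variety indexed by a non-admissible element, or by an admissible element of the opposite component, can satisfy the spin $\pm$ condition. This reduces to explicit sign and pfaffian calculations with the half-spin representation restricted to each $G^\circ(\CO)$-orbit.

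Next I would prove reducedness of $\overline{\RM}^\pm_\CL$ by transferring a Frobenius splitting from the affine flag variety. Zhu's proof of the Pappas--Rapoport coherence conjecture provides a Frobenius splitting of $\sFl_{G^\circ,\CL}$ that compatibly splits every affine Schubert variety; pulling this splitting back along the local model diagram produces a Frobenius splitting of the special fiber, and hence reducedness. To move the information from equal to mixed characteristic one uses the Beilinson--Drinfeld / Pappas--Zhu global affine Grassmannian, which interpolates between the affine flag variety in characteristic $p$ and the local model over $\CO$ as a flat family; flatness of the family then lets one recognize the scheme-theoretic structure of the special fiber from the structure of the corresponding Schubert varieties in the affine flag variety.

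The principal obstacle, as with every type $D$ local model, is the interaction between the two connected components of $\OGr(n,2n)$ and the spin $\pm$ condition. Over $F$ the spin condition selects one connected component; over $k$ one must show that it continues to isolate exactly the ``correct'' half of the admissible set---no admissible stratum of the wrong sign appears, and no non-admissible Schubert variety slips in. This is precisely the phenomenon that Genestier's example reveals, and handling it requires an orbit-by-orbit computation of the spin functions together with the combinatorics of $\Adm(\mu)$ inside the Iwahori--Weyl group of $G^\circ$. I anticipate this to be the main technical content, and the step where the passage from topological flatness to scheme-theoretic flatness (through the reducedness input) is most delicate, since any hidden nilpotent along a Schubert boundary would break the argument above.
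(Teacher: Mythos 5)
First, be aware of what the target statement is: it is a \emph{conjecture}, and the paper does not prove it. The paper establishes only the ``preliminary form,'' namely topological flatness (Theorem \ref{intro-thmtopo}), and explicitly defers the reducedness of the special fiber $\RM^\pm_{\CL,k}$ to a forthcoming paper. So there is no proof in the paper of the full statement against which to check you; I can only assess whether your proposed route would close the remaining gap. Your reduction (topological flatness together with reducedness of the special fiber implies flatness) is correct, and your sketch of topological flatness is in the same spirit as the paper's \S\ref{sec-naivespinmod}--\S\ref{sec-topoflat}: embed $\RM^\pm_{I,k}$ into $\sFl_I$ and identify the permissible set $\Perm^\pm_I$ with the admissible set $\Adm(\mu_\pm)$. (The paper's actual mechanism, which you do not describe, is a reduction to the pseudo-maximal case $I=\{i\}$ --- where all $\min\{i,n-i\}+4$ Schubert cells are lifted explicitly, some only after a ramified base change $F(\sqrt{\pi})$ --- followed by the Haines--He vertexwise criterion to handle general $I$; your ``orbit-by-orbit'' phrasing is compatible with this but substantially less specific.)

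The genuine gap is in your reducedness step. Frobenius splitting of $\sFl_I$ compatibly splitting all Schubert varieties (via Zhu's coherence theorem and the Pappas--Zhu global degeneration) proves that the \emph{reduced} union of Schubert varieties is weakly normal and that the flat closure $\RM^{\pm\loc}_\CL$ has reduced special fiber --- but this is already known and is quoted in the paper as Proposition \ref{propiso}. It says nothing about the scheme structure that the moduli problem LM1--LM4$\pm$ imposes on $\RM^\pm_{\CL,k}$. Your phrase ``pulling this splitting back along the local model diagram produces a Frobenius splitting of the special fiber'' presupposes that $\RM^\pm_{\CL,k}$ is already identified \emph{as a scheme} with the corresponding union of (reduced) Schubert varieties; that identification is exactly the content of the conjecture, since topological flatness only gives equality of underlying topological spaces. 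A hidden embedded or nilpotent structure in the explicit linear-algebraic equations defining $\RM^\pm_{\CL,k}$ cannot be ruled out by splitting the ambient flag variety. To close this one needs an argument directly on the equations (in the style of G\"ortz's coordinate computations or standard monomial theory for the orthogonal case), which is precisely what the paper leaves open. So your proposal, as written, proves the part the paper proves and restates --- rather than resolves --- the part it does not.
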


\subsection{Main results}
For $n\leq 3$, Conjecture \ref{conjPR} can be verified by hand calculations. Throughout the paper, we assume $n\geq 4$. 

We say $P\sset G(F)$ is a \dfn{parahoric subgroup} of $G(F)$ if $P$ is a parahoric subgroup of $G^\circ(F)$ in the usual sense of Bruhat--Tits theory (\cite[\S 5.2]{bruhat1984groupes},\cite[\S 7.4]{kaletha2023bruhat}) for connected reductive groups. 
Our first result is the following description of $G^\circ(F)$-conjugacy classes of parahoric subgroups of $G(F)$.

\begin{thm}[{Proposition \ref{prop-Jform}}] \label{introthm-conjpara}
	Let $I$ be a non-empty subset of $[0,n]$ (notation as in \S \ref{subsec-notation-intro}). Define \begin{flalign*}
		\Lambda_i\coloneqq \CO\pair{\pi\inverse e_1,\ldots,\pi\inverse e_i,e_{i+1}, \ldots, e_{2n} }, \text{\ for $0\leq i\leq n$}. 
	\end{flalign*}
	Then the subgroup \begin{flalign*}
		P_I^\circ\coloneqq \cbra{g\in G(F)\ |\ g\Lambda_i=\Lambda_i, i\in I} \cap\ker\kappa,
	\end{flalign*}
	where $\kappa\colon G^\circ(F)\twoheadrightarrow \pi_1(G)$ denotes the Kottwitz map, is a parahoric subgroup of $G(F)$. 
	
	Furthermore, any parahoric subgroup of $G(F)$ is $G^\circ(F)$-conjugate to a subgroup $P_I^\circ$ for some (not necessarily unique) $I\sset [0,n]$. 
	The $G^\circ(F)$-conjugacy classes of maximal parahoric subgroups of $G(K)$ are in bijection with the set $\cbra{0,2,3,\ldots,\lfloor n/2\rfloor}$. The index set $\cbra{0}$ corresponds to a hyperspecial subgroup. 
\end{thm}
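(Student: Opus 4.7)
The plan is to apply Bruhat--Tits theory to the split connected reductive group $G^\circ$, which has root system of type $D_n$ and connected center $Z^\circ \cong \mathbb{G}_m$ (the diagonal similitudes). I would first fix the diagonal maximal split torus $T \subset G^\circ$, whose cocharacter lattice $X_*(T)$ is free of rank $n+1$ with coordinates $(a_1,\ldots,a_n;c)$ reflecting the similitude constraint $t_i t'_i = c$. The coroots $\pm e_i^\vee \pm e_j^\vee$ of type $D_n$ span the coroot lattice $Q^\vee$ inside the first $n$ coordinates, and the affine Dynkin diagram of $(G^\circ,T)$ is $\tilde{D}_n$ with vertex set $\{0,1,\ldots,n\}$ in Bourbaki labelling: a left fork $\{0,1\}$ attached to the vertex $2$, the chain $2 - 3 - \cdots - (n-2)$, and a right fork $\{n-1,n\}$ attached to $n-2$.

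The second step matches the $n+1$ vertices of the fundamental alcove in the reduced building of $G^\circ$ with the lattices $\Lambda_0,\ldots,\Lambda_n$. A direct computation yields $\Lambda_i^\psi = \pi\Lambda_{2n-i}$, so all the $\Lambda_i$ lie in a single self-dual periodic lattice chain and their stabilizers $P_{\{i\}}=\cbra{g\in G(F):g\Lambda_i=\Lambda_i}$ are bounded. A key observation is that any $g\in G^\circ(F)$ satisfying $g\Lambda_i = \pi^k\Lambda_i$ has $v(c(g))=2k$ (by comparing determinants), so the condition $g\in\ker\kappa$ forces $k=0$; hence $\Stab_{G^\circ(F)}(v_i)\cap \ker\kappa = P_{\{i\}}\cap \ker\kappa$, which is exactly the parahoric of $G^\circ(F)$ attached to the vertex $v_i$. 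Since the fundamental alcove is a simplex, for any non-empty $I\sset[0,n]$ the subgroup $P_I^\circ$ is the parahoric attached to the facet spanned by $\cbra{v_i:i\in I}$, and by Bruhat--Tits theory every parahoric of $G(F)$ is then $G^\circ(F)$-conjugate to some such $P_I^\circ$.

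To enumerate $G^\circ(F)$-conjugacy classes of maximal parahorics, I would compute the effective group acting on vertices of the alcove via $G^\circ(F)$-conjugation. Since $Z^\circ$ acts trivially on the reduced building, this group is $\Omega_{\mathrm{eff}} = X_*(T/Z^\circ)/Q^\vee = P^\vee/Q^\vee = \pi_1(\mathrm{PSO}_{2n})$, where $P^\vee$ denotes the coweight lattice of $D_n$; this equals $(\Z/2)^2$ for $n$ even and $\Z/4$ for $n$ odd. Lifting its generators to explicit monomial similitudes in $G^\circ(F)$ — one coming from the fundamental coweight $\omega_n^\vee=\tfrac12(e_1^\vee+\cdots+e_n^\vee)$, and one from a basis permutation $e_i\leftrightarrow e_{2n+1-i}$ composed with scaling — one identifies the action on $\tilde{D}_n$ as the diagram automorphisms swapping within each fork and flipping the diagram left-to-right. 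A direct orbit computation then gives $\cbra{0,1,n-1,n}$, the pairs $\cbra{i,n-i}$ for $2\le i < n/2$, and the singleton $\cbra{n/2}$ when $n$ is even, yielding $\cbra{0,2,3,\ldots,\lfloor n/2\rfloor}$ as a set of orbit representatives. The representative $0$ is hyperspecial since $\Lambda_0$ is $\psi$-self-dual.

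The main obstacle is verifying rigorously that intersection with $\ker\kappa$ yields exactly the connected parahoric group scheme at each facet, rather than some strictly larger lattice stabilizer. This reduces to describing $\pi_1(G^\circ)\cong \Z\oplus \Z/2$ concretely, with the $\Z$-factor being the valuation of the similitude character and the $\Z/2$-factor being the image of $e_1^\vee$ mod $Q^\vee$ (a spinor-norm-type invariant on the $\mathrm{SO}_{2n}$-part), and to matching the Kottwitz map with this pair of invariants. A secondary subtlety is to track carefully the difference between $G(F)$ and $G^\circ(F)$ throughout, since $P_I^\circ$ is defined as a subgroup of $G(F)$ even though $\ker\kappa$ is defined only on $G^\circ(F)$.
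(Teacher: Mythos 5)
Your overall strategy — describe the alcove, identify vertices with lattice stabilizers, and compute orbits of the diagram automorphism group $\Omega_{\mathrm{eff}}$ — is the same one the paper uses. However, there is a genuine gap at the foundation.

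\textbf{The vertex--lattice identification is wrong.} You claim that the $n+1$ vertices of the $\widetilde{D}_n$ alcove are matched with the lattices $\Lambda_0,\ldots,\Lambda_n$. That is not the case. The four fork vertices of $\widetilde{D}_n$ correspond to four distinguished lattices $\Lambda_0,\ \Lambda_{0'},\ \Lambda_n,\ \Lambda_{n'}$, where $\Lambda_{0'}=\CO\langle\pi^{-1}e_1,e_2,\ldots,e_{2n-1},\pi e_{2n}\rangle$ and $\Lambda_{n'}=\CO\langle\pi^{-1}e_1,\ldots,\pi^{-1}e_{n-1},e_n,\pi^{-1}e_{n+1},e_{n+2},\ldots,e_{2n}\rangle$; only the $n-3$ internal chain vertices are the $\Lambda_i$ for $2\le i\le n-2$. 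The lattices $\Lambda_1$ and $\Lambda_{n-1}$ do \emph{not} sit at vertices: the points $a_1=(-1/2,0^{(2n-2)},1/2)$ and $a_{n-1}=((-1/2)^{(n-1)},0,0,(1/2)^{(n-1)})$ are the \emph{midpoints} of the fork edges $\{a_0,a_{0'}\}$ and $\{a_n,a_{n'}\}$, so $P_1^\circ$ and $P_{n-1}^\circ$ are parahorics attached to one-dimensional facets, not maximal parahorics. Your sentence ``hence $\Stab_{G^\circ(F)}(v_i)\cap\ker\kappa=P_{\{i\}}\cap\ker\kappa$, which is exactly the parahoric attached to the vertex $v_i$'' is false for $i=1$ and $i=n-1$, and ``for any non-empty $I\subset[0,n]$ the subgroup $P_I^\circ$ is the parahoric attached to the facet spanned by $\{v_i:i\in I\}$'' has the same defect.

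This matters for the coverage assertion. Facets whose closure meets $a_{0'}$ or $a_{n'}$ (but not $a_0$ or $a_n$) are not of the form ``span of $\{v_i:i\in I\}$'' for any $I\subset[0,n]$ under your identification, so your argument does not directly show every parahoric is conjugate to some $P_I^\circ$. The paper handles this by first working with the genuine vertex index set $\sI=\{0,0',2,\ldots,n-2,n,n'\}$, showing every facet is $G^\circ(K)$-conjugate into the alcove and thereby to some $P_J^\circ$ for $J\subseteq\sI$, and \emph{then} translating to the $[0,n]$-indexing with the dictionary $1\leftrightarrow\{0,0'\}$ and $n-1\leftrightarrow\{n,n'\}$; under that translation some $J$'s are reached only after conjugating by explicit elements $\tau_1,\tau_2\in\Omega$. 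Your orbit-representative set $\{0,2,\ldots,\lfloor n/2\rfloor\}$ is correct, but only because it happens to avoid the problematic indices $1$ and $n-1$; the enumeration argument as written does not establish the full ``every parahoric is conjugate to some $P_I^\circ$'' claim.

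Finally, you flag as an ``obstacle'' the need to verify that $\Stab(\Lambda_i)\cap\ker\kappa$ is precisely the connected stabilizer. This is indeed a step you must carry out; the paper does it by invoking \cite[Proposition 3]{haines2008parahoric} together with the lattice-chain description of the facet fixer, rather than by re-deriving the structure of $\pi_1(G^\circ)$.
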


An analogous statement holds for the unitary similitude groups, see \cite[\S 1.2]{pappas2009local}.
The proof of Theorem \ref{introthm-conjpara} is based on the lattice-theoretic description of the Bruhat--Tits building for $G$; compare the unitary case in \cite{yang2024wildly}.


The following theorem is a weaker version of Conjecture \ref{conjPR}. 
\begin{thm}[{Corollary \ref{coro-mainresults}}] \label{intro-thmtopo}
	The spin local model $\RM^\pm_\CL$ is topologically flat over $\CO$, i.e., the generic fiber $\RM_\CL^\pm\otimes F$ is dense in $\RM^\pm_\CL$ (with respect to the Zariski topology).
\end{thm}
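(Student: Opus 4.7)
The plan is to use the Pappas--Rapoport affine-flag-variety approach. Topological flatness of $\RM^\pm_\CL$ amounts to showing that every point of the special fiber $\bar{\RM}^\pm_\CL := \RM^\pm_\CL \otimes k$ lies in the Zariski closure of the generic fiber. Let $P \sset G^\circ(F)$ be the parahoric associated with $\CL$ via Theorem \ref{introthm-conjpara}, let $\sFl_P$ be the corresponding partial affine flag variety of the loop group of $G^\circ$, and let $\mu$ be the minuscule cocharacter whose associated flag variety is $\OGr(n,2n)$. The moduli description provides a natural closed embedding $\bar{\RM}^\naive_\CL \hookrightarrow \sFl_P$; restricting gives $\bar{\RM}^\pm_\CL \hookrightarrow \sFl_P$. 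By general theory, the flat closure of the generic fiber inside $\RM^\naive_\CL$ is topologically identified with the $\mu$-admissible locus
\[
\mathcal{A}_P(\mu) = \bigcup_{w \in \Adm_P(\mu)} S_w \sset \sFl_P.
\]
Since the spin condition is closed and holds on the generic fiber, this flat closure sits inside $\RM^\pm_\CL$, so $\mathcal{A}_P(\mu) \sset |\bar{\RM}^\pm_\CL|$ automatically. The real content is the reverse inclusion $|\bar{\RM}^\pm_\CL| \sset \mathcal{A}_P(\mu)$.

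The strategy for this is to pass through the intermediate $\mu$-permissible locus. A Kottwitz--Rapoport-type analysis for the naive model yields $|\bar{\RM}^\naive_\CL| \sset \bigcup_{w \in \Perm_P(\mu)} S_w$. In type $\widetilde{D}_n$ one has $\Perm_P(\mu) \supsetneq \Adm_P(\mu)$ in general, and the extra Schubert cells $S_w^\circ$ for $w \in \Perm_P(\mu) \setminus \Adm_P(\mu)$ are exactly the locus of non-flatness observed by Genestier. The key claim is therefore: for each such extra $w$, a generic point of $S_w^\circ$ fails the spin condition. This will be an explicit calculation with lattice chains: parametrize a representative point of $S_w^\circ$, compute the associated line $\bigwedge^n \Lambda$ in the ambient exterior algebra (on which the half-spin decomposition $W^+ \oplus W^-$ is defined), and check that the resulting line falls outside the prescribed $\pm$-eigencomponent.

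For the Iwahori case, this verification was carried out by Smithling \cite{smithling2011topological}. The main obstacle is to extend the analysis uniformly across all parahorics $P$ in Theorem \ref{introthm-conjpara}. I would pursue a reduction-to-Iwahori argument: pull back the situation along the projection $\sFl_I \to \sFl_P$, noting that a class $w \in \Perm_P(\mu) \setminus \Adm_P(\mu)$ at level $P$ lifts to non-admissible elements at the Iwahori level, and that the spin condition at level $P$ is compatible (under this projection) with its Iwahori analogue. The hardest step is expected to be this compatibility, together with a careful combinatorial bookkeeping of $\mu$-admissibility over the Kottwitz-map fibers parametrizing the $P$-conjugacy classes classified in Theorem \ref{introthm-conjpara}; the uniform treatment across the maximal parahoric types indexed by $\{0,2,3,\ldots,\lfloor n/2 \rfloor\}$ is what prevents a naive appeal to Smithling's Iwahori-level result.
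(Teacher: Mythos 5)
Your plan correctly identifies the framework (affine flag variety embedding, permissible vs.\ admissible sets, excess Schubert cells as the obstruction), but the proposed reduction to the Iwahori level is a genuine gap, and it is not what the paper does. The issue is that the spin condition at level $P_I$ constrains only the lattices $\Lambda_i$ for $i\in I$, whereas the Iwahori spin condition constrains the full chain; a $P_I$-level cell could in principle satisfy the $P_I$-spin condition while every Iwahori lift fails the (strictly stronger) Iwahori spin condition, so knowing Smithling's Iwahori verification does not by itself rule out excess cells at level $P_I$. You flag this ``compatibility'' as the hardest step, but it is not merely hard --- it is precisely the point where the argument as stated does not close.

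The paper goes in the opposite direction: instead of reducing \emph{up} to the Iwahori, it reduces \emph{down} to the pseudo-maximal (vertex) parahorics $I=\{i\}$. In that case (\S\ref{subsec-pseud}) one enumerates the Schubert cells in $\RM^\naive_{i,k}$ explicitly --- there are exactly $\min\{i,n-i\}+4$ of them --- and writes down lifts of each to the generic fiber (Proposition~\ref{prop-liftm4}), so the \emph{naive} local model $\RM^\naive_i$ is already topologically flat (Corollary~\ref{coro-topoflat}); the spin $\pm$-conditions merely sort these cells between $\RM^+_i$ and $\RM^-_i$. This gives $\Perm_i^\pm = \Adm_i(\mu_\pm)$ directly (Lemma~\ref{lem-perm-admi}), with no need to identify or discard excess cells. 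The passage to arbitrary $I$ then comes from the observation that $\RM^\pm_I = \bigcap_{i\in I}q_i^{-1}(\RM^\pm_i)$ (Lemma~\ref{lem-intersection}), hence $\Perm^\pm_I = \bigcap_{i\in I}\rho_i^{-1}(\Adm_i(\mu_\pm))$, and the Haines--He vertexwise criterion (Theorem~\ref{thm-vertexcri}) identifies this intersection with $W_I\backslash\Adm(\mu_\pm)/W_I$. Your proposal does not invoke or anticipate the vertexwise criterion, which is the combinatorial key that makes the reduction clean and uniform across parahoric types; without it, the reduction-to-Iwahori route would require reproving a comparable admissibility statement plus the spin compatibility you correctly identify as the sticking point.
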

\begin{remark}
   \begin{enumerate}
   	\item Theorem \ref{intro-thmtopo} is a generalization of \cite[Theorem 7.6.1]{smithling2011topological}, where Smithling proves the topological flatness in the Iwahori case. We emphasize that our approach is different from his.
   	\item By Theorem \ref{intro-thmtopo}, Conjecture \ref{conjPR} reduces to verifying that the special fiber $\RM^\pm_{\CL,k}$ is reduced. This will be investigated in a forthcoming paper.
   \end{enumerate}
\end{remark}

By Theorem \ref{introthm-conjpara}, any self-dual lattice chain $\CL$ is $G^\circ(F)$-conjugate to the standard self-dual lattice chain  $$\Lambda_I\coloneqq \cbra{\Lambda_\ell}_{\ell\in 2n\BZ\pm I}$$ for some $I\sset [0,n]$, where $\Lambda_{-i}\coloneqq \Lambda_i^\psi$ and $\Lambda_\ell\coloneqq \pi^{-d}\Lambda_{\pm i}$, for $i\in I$ and $\ell=2nd\pm i, d\in\BZ$. We have $\RM^\naive_\CL\simeq \RM^\pm_{\Lambda_I}$ and $\RM^\pm_\CL\simeq \RM^\pm_{\Lambda_I}$ (see Remark \ref{rmk-toI}). Set \begin{flalign*}
	\RM_I^\naive\coloneqq \RM^\naive_{\Lambda_I} \text{\ and\ } \RM^\pm_I\coloneqq \RM^\pm_{\Lambda_I}.
\end{flalign*}

If $\CL$ is $G^\circ(F)$-conjugate to $\Lambda_I$ for a singleton $I=\cbra{i}\sset [0,n]$, then we say $\CL$ is \dfn{pseudo-maximal}. In this case, we also write $\RM^\naive_i$ (resp. $\RM^\pm_i$) for $\RM_{\cbra{i}}^\naive$ (resp. $\RM^\pm_{\cbra{i}}$).

\begin{thm} \label{intro-thmpseumax}
	Suppose that $\CL$ is pseudo-maximal corresponding to $I=\cbra{i}$. 
	\begin{enumerate}
       \item The $\CO$-scheme $\RM^\naive_i$ (and hence $\RM^\pm_i$) is topologically flat. The underlying topological space of $\RM^\naive_i$ is the union of those of $\RM^+_i$ and $\RM^-_i$.
       \item If $i=0$ or $n$, then $\RM^\pm_i$ is isomorphic to a connected component of the orthogonal Grassmannian $\OGr(n,2n)$ over $\CO$. In particular, $\RM^\pm_i$ is irreducible and smooth of relative dimension $n(n-1)/2$.  
       
       If $i\neq 0,n$, then the special fiber $\RM^\pm_{i,k}$ has equidimension $n(n-1)/2$, and has exactly two irreducible components whose intersection is irreducible.
       \item There exists a stratification on the reduced special fiber \begin{flalign*}
           (\RM^\pm_{i,k})_\red =\coprod_{\ell=\max{\cbra{0,2i-n}}}^i\RM^\pm_i(\ell),
       \end{flalign*}
       where each stratum $\RM^\pm_i(\ell)$ (see Definition \ref{introdefn-Ml}) is a $k$-smooth locally closed subscheme of $\RM^\pm_{i,k}$, and $\RM^\pm_i(\ell')$ is contained in the closure $\ol{\RM^\pm(\ell)}$ if and only if $\ell'\leq \ell$.
   \end{enumerate} 
\end{thm}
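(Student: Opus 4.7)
The plan is to analyze $\RM^\pm_i$ in the pseudo-maximal case by directly unwinding its moduli description, then extracting each of the three statements from the structure of the special fiber.

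For (2) in the boundary cases $i = 0$ and $i = n$, the single lattice $\Lambda_i$ is self-dual up to a uniformizer, so $\RM^\naive_i$ is canonically identified with the orthogonal Grassmannian $\OGr(n,2n)_\CO$, which is smooth over $\CO$ with two geometrically connected components of relative dimension $n(n-1)/2$. The spin $\pm$-condition, built from the half-spin representations of $\operatorname{Spin}_{2n}$, is designed to cut out exactly one such component, yielding $\RM^\pm_i$ smooth irreducible of the stated dimension. Topological flatness (1) and the equality $\RM^\naive_i = \RM^+_i \cup \RM^-_i$ are then immediate.

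For the interior case $0 < i < n$, a $k$-point of $\RM^\naive_{i,k}$ is an isotropic $n$-dimensional subspace $\bar\CF \subset V := \Lambda_i \otimes k$ satisfying the naive compatibility with the image of $\Lambda_i^\psi$ in $V$. The subspace $W := (\Lambda_i^\psi + \pi\Lambda_i)/\pi\Lambda_i \subset V$ carries a natural symmetric pairing (induced from the $\CO$-pairing $\Lambda_i \times \Lambda_i^\psi \to \CO$) whose radical has dimension $i$. The discrete invariant $\ell := \dim_k(\bar\CF \cap W)$ takes values in $[\max\{0,2i-n\},\, i]$ by a straightforward linear-algebra argument using isotropy of $\bar\CF$ and non-degeneracy of the non-radical quotient, and produces the stratification in (3). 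Each stratum $\RM^\pm_i(\ell)$ fibers over a Grassmannian of isotropic $\ell$-subspaces of $W/(W\cap W^\perp)$ with fibers that are themselves orthogonal-Grassmannian-type varieties, and is therefore $k$-smooth and locally closed in $\RM^\pm_{i,k}$. The closure relation $\RM^\pm_i(\ell') \subseteq \overline{\RM^\pm_i(\ell)}$ iff $\ell' \leq \ell$ follows from lower semicontinuity of $\dim_k(\bar\CF \cap W)$, combined with explicit families witnessing non-containment in the opposite direction.

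A direct dimension count then shows that the top stratum $\ell = i$ is open dense of dimension $n(n-1)/2$ in the special fiber, establishing equidimensionality. The top stratum itself splits into two connected components because its typical fiber is a classical orthogonal Grassmannian of a non-degenerate quadratic space, which has two components; these give the two irreducible components of $\RM^\pm_{i,k}$ in (2). Their intersection is, by the closure relations, $\overline{\RM^\pm_i(i-1)}$, which is irreducible because that stratum is. Topological flatness in (1) follows by exhibiting, for each connected component of $\RM^\pm_i(i)$, an explicit $F$-point of $\OGr(n,2n)$ specializing into it---easy from the split basis $(e_j)$. The equality $\RM^\naive_i = \RM^+_i \cup \RM^-_i$ on underlying topological spaces is then the pointwise statement that every naive point satisfies at least one spin condition, verified on representatives of each stratum. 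The principal obstacle is to verify that the two connected components of $\RM^\pm_i(i)$ in fact lie in distinct spin loci $\RM^+_i$ and $\RM^-_i$ (rather than both within one), and in parallel to check the irreducibility of $\RM^\pm_i(i-1)$; both demand a concrete evaluation of the half-spin invariants on stratum representatives, and this is where the technical heart of the Pappas--Rapoport spin condition must be brought to bear.
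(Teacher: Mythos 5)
Your high-level strategy is related to the paper's but differs in its mechanism: rather than embedding $\RM^\naive_{i,k}$ into an affine flag variety and counting $W_i$-orbits of naively-permissible subsets (the paper's Lemma \ref{lem-orbits4}, Corollary \ref{coro-bijection}, Proposition \ref{prop-liftm4}), you propose a direct linear-algebra stratification and dimension count. That alternative route could work in principle, but there are several concrete problems.

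First, your definition of $W := (\Lambda_i^\psi + \pi\Lambda_i)/\pi\Lambda_i$ seems off. Since $\pi\Lambda_i \subset \Lambda_i^\psi \subset \Lambda_i$ (with $\dim_k \Lambda_i/\Lambda_i^\psi = 2i$), we have $W = \Lambda_i^\psi/\pi\Lambda_i$, a $(2n-2i)$-dimensional space. The induced symmetric pairing on $W$ is \emph{non-degenerate} (the radical of $\psi$ mod $\pi$ on $\Lambda_i^\psi$ is precisely $\pi\Lambda_i$, which becomes zero in $W$); it does not have a radical of dimension $i$. Also, $W = \ker(\bar\iota)$, so your invariant $\dim_k(\bar\CF \cap W)$ is $n$ minus the paper's $\ell = \operatorname{rank}(\iota(\CF_i))$, and therefore ranges over $[\,n-i,\,\min\{n,2n-2i\}\,]$ rather than $[\max\{0,2i-n\},\,i]$. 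These numerical discrepancies would have to be reconciled before the stratification argument can proceed.

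Second, and more seriously, the ``principal obstacle'' you identify is not the right thing to worry about. You say you must check that ``the two connected components of $\RM^\pm_i(i)$ lie in distinct spin loci $\RM^+_i$ and $\RM^-_i$ (rather than both within one).'' But $\RM^\pm_i$ denotes a \emph{fixed} choice of sign, so by definition any component of $\RM^\pm_i(i)$ lies in $\RM^\pm_i$; it is tautologically impossible for them to lie in distinct spin loci. What actually has to be verified (and what the paper verifies in Lemma \ref{lem-cells1}, invoking Proposition \ref{prop-pmperm} and the explicit form of the $a$-involution from \cite[\S 7.1.4]{pappas2009local}) is that the \emph{four} top Schubert cells of $\RM^\naive_{i}(i)$, represented by the sets $E^i_1,\dots,E^i_4$, split two-and-two between $\RM^+$ and $\RM^-$, rather than, say, all four landing in $\RM^+$. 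Without that computation the claim that $\RM^\pm_{i,k}$ has exactly two irreducible components has no support. Your proposal acknowledges this as the ``technical heart'' but does not carry it out; the paper's handle on Schubert-cell representatives via permissible subsets makes this verification tractable.

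Finally, for topological flatness you argue that it suffices to lift a point from each component of the top stratum, which does work once the closure relations and equidimensionality are established; but the paper also lifts representatives of all lower strata (Proposition \ref{prop-liftm4} uses a ramified quadratic extension $L = F(\sqrt{\pi})$ to produce lifts for strata with $\ell < i$). This is not strictly necessary once you know the top stratum is dense, but it is worth noting that establishing that density (and the equidimensionality claim) requires knowing the dimensions of each stratum, which the paper again obtains through the Schubert-cell description. Your proposal asserts these dimensions by ``a direct dimension count'' without supplying it; this must be supplied, either via the Schubert-cell calculus or via an explicit fibration structure on each stratum, and the latter requires the corrected linear algebra above.
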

Theorem \ref{intro-thmpseumax} follows from Proposition \ref{prop-i0n}, Corollary \ref{coro-topoflat} and Proposition \ref{prop-stratificationMpm}.
\begin{remark}
	We prove Theorem \ref{intro-thmtopo} using Theorem \ref{intro-thmpseumax}, and in fact only Part (1) of Theorem \ref{intro-thmpseumax} is needed. Parts (2) and (3) provide additional information on the geometry of $\RM^\pm_{i,k}$.
\end{remark}

\subsection{}
Our strategy for proving Theorem \ref{intro-thmtopo} is to first reduce to the pseudo-maximal case, namely Theorem \ref{intro-thmpseumax}. Let $I$ be a non-empty subset of $[0,n]$. Following \cite{gortz2001flatness,smithling2011topological}, we first embed the special fiber $\RM^\naive_{I,k}$ (resp. $\RM^\pm_{I,k}$) of $\RM^\naive_I$ (resp. $\RM^\pm_I$) in an affine flag variety $\sFl_I$, identifying $\RM^\naive_{I,k}$ (resp. $\RM^\pm_{I,k}$) with a union of Schubert cells in $\sFl_I$ (set-theoretically). Since the group $G$ is disconnected, 
the situation is slightly more complicated than the unitary case. 

Let $\wt{W}$ (resp. $\wt{W}^\circ$) denote the Iwahori--Weyl group of $G$ (resp. $G^\circ$).  We prove (see Corollary \ref{coro-inWcirc} and \eqref{perm46}) that the Schubert cells in $\RM^\pm_{I,k}$ are bijection with a double coset space $\Perm^\pm_I$ taken inside $W_I\backslash\wt{W}^\circ/W_I$. For each $i\in I$, let $$\rho_i\colon W_I\backslash \wt{W}^\circ/W_I\ra W_i\backslash\wt{W}^\circ/W_i$$ denote the obvious projection map.
We obtain that \begin{flalign*}
	\Perm_I^\pm =\bigcap_{i\in I}\rho_i\inverse(\Perm_i^\pm).
\end{flalign*} 

By Theorem \ref{intro-thmpseumax} (1), $\Perm_i^\pm$ equals the admissible set $\Adm_i(\mu_\pm)\coloneqq W_i\backslash\Adm(\mu_\pm)/W_i$, where \begin{flalign}
    \mu_+\coloneqq (1^{(n)},0^{(n)}) \text{\ and\ } \mu_-\coloneqq (1^{(n-1)},0,1,0^{(n-1)}) \label{1cochar}
\end{flalign}
are two minuscule cocharacters of $G$.
Then Theorem \ref{intro-thmtopo} follows from the vertexwise criteria \cite[Theorem 1.5]{haines2017vertexwise} for the admissible sets. For further details of the reduction step, see \S \ref{subsec-genepara}.
 
We now explain the proof of Theorem \ref{intro-thmpseumax} in greater detail. For $I=\cbra{i}$, we compute the double coset space $\Perm_i^\pm$ parametrizing the Schubert cells in $\RM^\pm_{i,k}$. In this situation, $\RM^\naive_i$ is a union of $\RM^+_i$ and $\RM^-_i$, and $\RM^\naive_{i,k}$ consists of exactly $\min{\cbra{i,n-i}}+4$ Schubert cells. We explicitly write down representatives for each of these cells, and verify that all representatives lift to the generic fiber $\RM^\naive_{i,F}$. This proves the density of $\RM^\naive_{i,F}$ in $\RM^\naive_i$, and hence Theorem \ref{intro-thmpseumax} (1).
 
\begin{defn}[{Definition \ref{defn-Mell}}]  \label{introdefn-Ml}
    Let $\iota\colon \Lambda_i\ra \pi\inverse\Lambda_i^\psi=\Lambda_{2n-i}$ denote the natural inclusion map (and its base change). 
    For an integer $\ell$, denote by $$\RM^\pm_i(\ell)\sset \RM^\pm_{i,k}$$ the locus where $\iota(\CF_i)$ has rank $\ell$.
\end{defn}
By standard properties of the rank function, these loci provide a stratification of $(\RM^\pm_{i,k})_\red$ as in Theorem \ref{intro-thmpseumax} (3), see Proposition \ref{prop-stratificationMpm}. Note that $\RM^\pm_{i,k}$ is an empty set if $\ell<\max\cbra{0,2i-n}$ or $\ell>i$. We show that if $i\neq 0,n$, then  $\RM^\pm_i(i)$ consists of two Schubert cells, while $\RM^\pm_i(\ell)$ for $\max{\cbra{0,2i-n}}\leq \ell<i$ is a single Schubert cell. This implies parts (2) and (3) of Theorem \ref{intro-thmpseumax}.

\subsection{Organization}
We now give an overview of the paper. 

In \S \ref{sec-para}, we explicitly describe the Bruhat--Tits building for $G$ in terms of norms and lattice chains. As a corollary, we describe $G^\circ(F)$-conjugacy classes of parahoric subgroups as in Theorem \ref{introthm-conjpara}. 

In \S \ref{sec-naivespinmod}, we construct naive and spin local models using explicit moduli functors, following Pappas--Rapoport \cite[\S 8]{pappas2009local}. We embed the special fiber $\RM^\naive_{I,k}$ into an affine flag variety $\sFl _I$, identifying $\RM^\naive_{I,k}$ and $\RM^\pm_{I,k}$ with unions of Schubert cells in $\sFl_I$. 

In \S \ref{sec-topoflat}, we prove results for the topological flatness, namely  Theorem  \ref{intro-thmtopo} and \ref{intro-thmpseumax}. In \S \ref{subsec-pseud}, we consider the pseudo-maximal case, that is, $I=\cbra{i}$ is a singleton. We show that in this case $\RM^\naive_{i}$ is already topologically flat. More precisely, we obtain that there are exactly $\min{\cbra{i,n-i}}+4$ Schubert cells in $\RM^\naive_{i,k}$. By explicitly writing down representatives of each Schubert cell, we show that these representatives all lift to the generic fiber $\RM^\naive_{i,F}$, thereby proving Theorem \ref{intro-thmpseumax} (1). In addition, we construct a stratification of the special fiber of the spin local model $\RM^\pm_i$. By relating each stratum to Schubert cells, we prove Theorem \ref{intro-thmpseumax} (2) and (3).  In \S \ref{subsec-genepara}, we prove that $\RM^\pm_I$ is topologically flat for any $I$, which implies Theorem \ref{intro-thmtopo}. The argument proceeds by reduction to the pseudo-maximal case. The key ingredient here is the vertexwise criteria \cite{haines2017vertexwise} for the admissible sets.

\subsection*{Acknowledgements}
I thank I. Zachos and Z. Zhao for helpful discussions while preparing this paper. I am also grateful to Y. Luo for comments on a preliminary draft. This work was partially supported by the Shuimu Tsinghua Scholar Program of Tsinghua University.

\subsection{Notation} \label{subsec-notation-intro}
Throughout the paper, $(F,\CO,\pi, k)$ denotes a complete discretely valued field with ring of integers $\CO$ , uniformizer $\pi$, and residue field $k$ of characteristic $p>2$. Denote by $\ol{k}$ an algebraic closure of $k$. We also employ an auxiliary complete discretely valued field $K$ with ring of integers $\CO_K$, uniformizer $t$, and the same residue field $k$; eventually $K$ will be the field $k((t))$ of Laurent series over $k$.

For $1\leq i\leq 2n$, define $i^*\coloneqq 2n+1-i$. Given $v\in\BZ^{2n}$, we write $v(i)$ for the $i$-th entry of $v$ of $v$, and $\Sigma v$ for the sum of entries of $v$. Let $v^*\in\BZ^{2n}$ denote the vector defined by $v^*(i)\coloneqq v(i^*)$. For $v,w\in\BZ^{2n}$, we write $v\geq w$ if $v(i)\geq w(v)$ for all $1\leq i\leq 2n$. For $d\in \BZ$, denote by $\mathbf d$ the image of $d$ along the diagonal embedding $\BZ\ra \BZ^{2n}$. The expression $(a^{(r)},b^{(s)},\ldots)$ denotes the vector with $a$ repeated $r$ times, followed by $b$ repeated $s$ times, and so on. 

For any real number $x$, denote by $\lfloor x\rfloor$ (resp. $\lceil x\rceil$) for the greatest (resp. smallest) integer $\leq x$ (resp. $\geq x$).

For integers $n_2\geq n_1$, we denote $[n_1,n_2]\coloneqq \cbra{n_1,\ldots,n_2}$.  For a subset $E\sset [1,2n]$ of cardinality $n$, set $E^*\coloneqq \cbra{i^*\ |\ i\in E}$, and $E^\perp\coloneqq (E^*)^c$ (the complement of $E^*$ in $[1,2n]$). Denote by $\Sigma E$ the sum of the entries in $E$.  

For a scheme $\CX$ over $\CO$, we let $\CX_k$ or $\CX\otimes k$ denote the special fiber $\CX\otimes_\CO k$, and $\CX_F$ or $\CX\otimes F$ denote the generic fiber $\CX\otimes_\CO F$.  

\section{Parahoric subgroups of the orthogonal similitude groups} \label{sec-para}
We follow the notation as in \S \ref{subsec-notation-intro}. In particular, $K$ is a complete discretely valued field. Denote by $\val\colon K\cross\ra \BZ$ the normalized valuation on $K$. 

\subsection{Orthogonal similitude group}
Let $n\geq 4$ be an integer.
Let $V=K^{2n}$ be a $K$-vector space of dimension $2n$ equipped with a symmetric $K$-bilinear form $\psi$. Assume that there exists a $K$-basis $(e_i)_{1\leq i\leq 2n}$ of $V$ such that $\psi(e_i,e_j)=\delta_{i,2n+1-j}$. Denote by $G=\GO(V,\psi)$ the associated split orthogonal similitude group. Let $G^{\circ}$ be the connected component of $G$ containing the identity element. Following \cite[\S 5]{smithling2011topological}, let $\tau$ denote the permutation matrix in $G(K)$ corresponding to the transposition $(n,n+1)$. Then we have \begin{flalign}
	G = G^{\circ}\sqcup \tau G^{\circ}.  \label{Gtwocompo}
\end{flalign}   

Let $T$ be the standard split maximal torus of diagonal matrices in $G$. For a $K$-algebra $R$, we have \begin{flalign*}
	 T(R)=\cbra{\diag(x_1,\ldots,x_{2n})\in\GL_{2n}(R)\ |\ x_1x_{2n}=x_2x_{2n-1}=\cdots=x_nx_{n+1} }.
\end{flalign*}
Then the group $X_*(T)$ of cocharacters of $T$ is isomorphic to \begin{flalign*}
	\cbra{(r_1,\ldots,r_{2n})\in\BZ^{2n}\ |\ r_1+r_{2n}=\cdots=r_n+r_{n+1} }.
\end{flalign*}
We have an isomorphism \begin{flalign}
    \begin{split}
    	T(K)/T(\CO_K) &\simto X_*(T)\\ \diag(x_1,\ldots,x_{2n}) &\mapsto (\val(x_1),\ldots,\val(x_{2n})).  \label{Tlattice}
    \end{split}
\end{flalign}
The coroot lattice $Q^\vee\sset X_*(T)$ can be identified with \[\cbra{(r_1,\ldots,r_{2n})\in\BZ^{2n}\ \vline \ \Centerstack[l]{$r_1+r_{2n}=\cdots=r_n+r_{n+1}=0$ \\ $r_1+\cdots+r_n$ is even } }. \]

Let $\sB=\sB(G,K)$ denote the (extended) Bruhat--Tits building of $G$. The standard apartment $\CA$ associated with $T$ can be identified with \begin{flalign*}
	  X_*(T)\otimes_\BZ\BR \simeq \cbra{(r_1,\ldots,r_{2n})\in\BR^{2n}\ |\ r_1+r_{2n}=\cdots=r_n+r_{n+1} }.
\end{flalign*}
We fix the base alcove $\fa$ as in \cite[\S 8.1]{smithling2014topological}. Then $\fa$ has  $n+1$ vertices (minimal facets) $a+\BR\cdot(1,\ldots,1)$, for $a$ one the following points  \begin{equation} \label{vertices}
	\begin{split}
		a_0 &\coloneqq (0,\ldots,0),\\ a_{0'}&\coloneqq (-1,0^{(2n-2)},1),\\ a_i&\coloneqq \rbra{(-1/2)^{(i)}, 0^{(2n-2i)},(1/2)^{(i)} },\ 2\leq i\leq n-2,\\ a_n&\coloneqq \rbra{(-1/2)^{(n)},(1/2)^{(n)}},\\ a_{n'}&\coloneqq \rbra{(-1/2)^{(n-1)},1/2,-1/2,(1/2)^{(n-1)} }.
	\end{split}
\end{equation}
The vertices corresponding to $a_0,a_{0'},a_n,a_{n'}$ are hyperspecial; the other vertices are nonspecial.

Define the Iwahori-Weyl groups $$\wt{W}\coloneqq N_{G}(K)/T(\CO_K),\quad \wt{W}^\circ\coloneqq N_{G^{\circ}}(K)/T(\CO_K), $$ where $N_{G}$ (resp. $N_{G^{\circ}}$) is the normalizer of $T$ in $G$ (resp. $G^{\circ}$). Then \[\wt{W}=\wt{W}^\circ\sqcup \tau\wt{W}^\circ. \]
As usual, $\wt{W}$ admits two semi-direct product decompositions\begin{flalign*}
    X_*(T)\rtimes W \text{\ and\ } W_{\aff}\rtimes \Omega,
\end{flalign*} 
where $W=N_G(K)/T(K)$ denotes the (finite) Weyl group, $W_{\aff}$ denotes the affine Weyl group, and $\Omega$ denotes the stabilizer subgroup of $\fa$. We also have $\wt{W}^\circ\simeq X_*(T)\rtimes W^\circ$, where $W^\circ=N_{G^\circ}(K)/T(K)$. Concretely, we may identify \begin{flalign*}
	W^\circ\simeq S_{2n}^\circ \text{\ and\ } W\simeq S_{2n}^*,
\end{flalign*} 
where $S_{2n}^*$ (resp. $S_{2n}^\circ$) denotes the subgroup of $S_{2n}$ consisting of (resp. even) permutations $\sigma$ satisfying \[\sigma(2n+1-i)+\sigma(i)=2n+1 \text{\ for all $1\leq i\leq 2n$}. \]
We can also identify 
\begin{flalign}
    \wt{W} &\simeq \cbra{(r_1,\ldots,r_{2n})\in\BZ^{2n}\ |\ r_1+r_{2n}=r_2+r_{2n-1}=\cdots=r_n+r_{n+1} }\rtimes S_{2n}^*, \label{weyl} \\ 
    \wt{W}^\circ &\simeq \cbra{(r_1,\ldots,r_{2n})\in\BZ^{2n}\ |\ r_1+r_{2n}=\cdots=r_n+r_{n+1} }\rtimes S_{2n}^\circ,  \label{Wcirc}\\
     W_\aff &\simeq Q^\vee\rtimes W^0\simeq \cbra{(r_1,\ldots,r_{2n})\in\BZ^{2n}\ \vline \ \Centerstack[l]{$r_1+r_{2n}=\cdots=r_n+r_{n+1}=0$ \\ $r_1+\cdots+r_n$ is even } }\rtimes S_{2n}^\circ.
\end{flalign}

Note that the group $\wt{W}$ acts on $\BR^{2n}$ via affine transformations: for any $v\in \BR^{2n}$ and $w=t^ww_0$, where $w_0\in S_{2n}^*$ and $t^w\in\BZ^{2n}$ as in \eqref{weyl}, define \begin{flalign}
    wv\coloneqq w_0v+t^w,   \label{actionW}
\end{flalign}
where $w_0$ acts on $v$ by permuting the coordinates: $(w_0v)(i)\coloneqq v(w_0\inverse i)$ for $1\leq i\leq 2n$.

Let \begin{flalign}
	\kappa\colon G^{\circ}(K)\twoheadrightarrow \pi_1(G^{\circ})\simeq \BZ\oplus \BZ/2\BZ  \label{eq-kott}
\end{flalign} be the Kottwitz homomorphism for $G^{\circ}(K)$, cf. \cite[\S 4.3]{smithling2011topological}.  Then $\kappa$ can be characterized as follows: for $g\in G^{\circ}(K)$, with respect to the Cartan decomposition $$G^{\circ}(K)=G^{\circ}(\CO_K)T(K)G^{\circ}(\CO_K),$$ write $g=g_1tg_2$ with $g_1,g_2\in G^{\circ}(\CO_K)$ and $t=\diag(t_1,\ldots,t_{2n})\in T(K)$, then $$\kappa(g)\coloneqq \rbra{\val(c(g)),\sum_{j=1}^{n}\val(t_j)\mod 2},$$ where $c\colon G^\circ\ra \BG_m$ denotes the similitude character.


The Kottwitz homomorphism $\kappa$ induces a map \begin{flalign*}
	\kappa'\colon \wt{W}^\circ\ra \BZ\oplus\BZ/2\BZ.
\end{flalign*}
With respect to the identification \eqref{Wcirc}, $\kappa'$ sends the element $(r_1,\ldots,r_{2n})\sigma\in\wt{W}^\circ$ to $(r_1+r_{2n},\sum_{j=1}^nr_j\mod 2)\in \BZ\times\BZ/2\BZ$.
\begin{defn} \label{defnWprime}
    Denote $\varepsilon\colon \wt{W}^\circ\xrightarrow{\kappa'}\BZ\oplus\BZ/2\BZ\twoheadrightarrow\BZ/2\BZ$.
	Set \begin{flalign}
		{W}'\coloneqq \ker\varepsilon\sset \wt{W}^\circ. \label{Wprime}
	\end{flalign}
\end{defn}
By construction, we have \begin{flalign*}
	W' \simeq \cbra{(r_1,\ldots,r_{2n})\in\BZ^{2n}\ \vline \ \Centerstack[l]{$r_1+r_{2n}=\cdots=r_n+r_{n+1}$ \\ $r_1+\cdots+r_n$ is even } }\rtimes S_{2n}^\circ.
\end{flalign*}

\subsection{Bruhat--Tits theory for $G$}
In this subsection, we describe the Bruhat--Tits building $\sB$ in terms of norms and self-dual lattice chains. This is a variant of a special case of the results in \cite{bruhat1987schemas} (or \cite[\S 15]{kaletha2023bruhat}) that describe $\sB(O(n),K)$.

Recall that a \dfn{norm} on $V$ is a map $\alpha\colon V\ra \BR\cup\cbra{+\infty}$ such that for $x,y\in V$ and $\lambda\in K$, we have \begin{flalign*}
	   \alpha(x+y) \geq \inf\cbra{\alpha(x),\alpha(y)},\ \alpha(\lambda x)=\val(\lambda)+\alpha(x), \text{\ and\ } x=0 \Leftrightarrow \alpha(x)=+\infty.
\end{flalign*}
The set $\CN$ of norms on $V$ carries a natural action of $\GL(V)(K)$: for $g\in \GL(V)(K)$ and $\alpha\in\CN$, define $g\alpha(x)\coloneqq \alpha(g\inverse x)$, for $x\in V$. 
It is well-known (see \cite[Proposition 15.1.31, Remark 15.1.32]{kaletha2023bruhat}) that there exists a $\GL(V)(K)$-equivariant bijection $$\sB(\GL(V),K)\simto \CN$$ between the Bruhat--Tits building $\sB(\GL(V),K)$ and $\CN$. For a norm $\alpha$ on $V$, we define the \dfn{dual norm} $\alpha^\vee$ via setting $$\alpha^\vee(x)\coloneqq \inf_{y\in V}\cbra{\val(\psi(x,y))-\alpha(y)}, \text{\ for $x\in V$}.$$
We say $\alpha$ is \dfn{almost self-dual} if $\alpha^\vee=\alpha+r$ for some constant $r\in \BR$. If moreover $r=0$, we say $\alpha$ is \dfn{self-dual}.

\begin{prop}\label{prop-Bnorms}
	There exists a $G(K)$-equivariant injection \begin{flalign*}
		\sB\hookrightarrow \sB(\GL(V),K)\simeq\CN
	\end{flalign*} whose image is the set of almost self-dual norms on $V$.  Furthermore, for $$a=(r_1,\ldots,r_{2n})  \in \CA,$$ the corresponding element in ${\CN}$ is the almost self-dual norm \begin{flalign}
		\alpha_a\colon V\ra \BR\cup\cbra{+\infty},\quad   \sum_{i=1}^{2n}x_ie_i \mapsto \inf\tcbra{\val(x_i)-r_i\ |\ 1\leq i\leq 2n}.   \label{alphaa}
	\end{flalign}
	We have $\alpha_a^\vee=\alpha_a+(r_1+r_{2n})$.
\end{prop}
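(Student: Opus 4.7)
My plan is to obtain the embedding by functoriality of Bruhat--Tits buildings and then identify the image through an explicit apartment calculation combined with a $G(K)$-orbit argument.

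First I would invoke the existence of a functorial $G(K)$-equivariant injection $\sB(G,K)\hookrightarrow \sB(\GL(V),K)$ associated to the closed immersion $G\hookrightarrow \GL(V)$; this is available from \cite{bruhat1987schemas} (compare the orthogonal discussion in \cite[\S 15]{kaletha2023bruhat}). Under the identification $\sB(\GL(V),K)\simeq \CN$, the image lands among certain norms on $V$, and the task is to pin down which ones.

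To cut out the image I compute how duality interacts with the $G(K)$-action. For $g\in G(K)$ with similitude factor $c(g)$, the defining identity $\psi(gx,gy)=c(g)\psi(x,y)$, together with the substitution $y\mapsto gy$ inside the infimum defining $\alpha^\vee$, yields $(g\alpha)^\vee=g(\alpha^\vee)+\val(c(g))$ for every $\alpha\in\CN$. Consequently the condition ``$\alpha^\vee-\alpha\in\BR$'' is $G(K)$-stable, so the image is contained in the set of almost self-dual norms. For the apartment calculation, compatibility with $T\hookrightarrow T_{\GL(V)}$ forces $a=(r_1,\ldots,r_{2n})\in\CA$ to map to the split norm $\alpha_a$ of the statement. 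To verify $\alpha_a^\vee=\alpha_a+(r_1+r_{2n})$, I compute $\alpha_a^\vee(e_j)$ by hand: the test vector $y=e_{j^*}$ produces the value $r_{j^*}$, and the opposite bound follows from $\val(\psi(e_j,y))=\val(y_{j^*})$ combined with $\alpha_a(y)\leq \val(y_{j^*})-r_{j^*}$ for a general $y=\sum y_i e_i$. Applying the apartment condition $r_i+r_{i^*}=r_1+r_{2n}$ then gives the identity on the standard basis, i.e., $\alpha_a^\vee(e_j)=r_{j^*}=-r_j+(r_1+r_{2n})=\alpha_a(e_j)+(r_1+r_{2n})$; since both sides are split norms with respect to the standard basis, the same relation extends to all of $V$.

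The main obstacle is the surjectivity onto the set of almost self-dual norms. Given such an $\alpha$, I would construct a hyperbolic basis $(e_i')$ of $V$ (one with $\psi(e_i',e_j')=\delta_{i,2n+1-j}$) that is simultaneously a splitting basis for $\alpha$; this exhibits $\alpha$ as $\alpha_a$ in an apartment $G(K)$-conjugate to $\CA$. The construction should proceed by a Witt-style induction, extracting hyperbolic pairs compatible with the lattice filtration defined by $\alpha$, and using the fact that the discriminant-like invariant forced by almost self-duality is compatible with the hyperbolic normalization. Injectivity of the whole embedding then follows from injectivity on apartments (the linear map $X_*(T)\otimes\BR\hookrightarrow X_*(T_{\GL(V)})\otimes\BR$) together with the transitivity of $G(K)$ on apartments of $\sB$.
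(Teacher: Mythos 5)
Your plan is a genuinely different route from the paper's. You attempt to characterize the image directly for $G=\GO(V,\psi)$: establish that almost self-duality is $G(K)$-stable via the identity $(g\alpha)^\vee = g(\alpha^\vee) + \val(c(g))$ (which is correct and cleanly derived), compute the norm on the apartment by hand, and then attack surjectivity by a Witt-style induction producing a hyperbolic basis that is simultaneously a splitting basis for a given almost self-dual norm. The paper instead \emph{avoids} the surjectivity problem entirely by passing to the ordinary orthogonal group $H=\mathrm{O}(V,\psi)$: it cites \cite[Prop.\ 15.2.14(3)]{kaletha2023bruhat}, which already identifies the image of $\sB(H,K)\hookrightarrow\CN$ with the \emph{self-dual} norms, and then uses the product decomposition $\sB(G,K)\simeq\sB(H,K)\times\BR$ together with the explicit shift $\iota'(x,r)=\iota(x)-r$ to translate ``self-dual'' into ``almost self-dual''. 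The apartment formula and the identity $\alpha_a^\vee=\alpha_a+(r_1+r_{2n})$ then drop out of $\alpha_{a-r}^\vee=\alpha_{a-r}$.

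The weak point in your proposal is precisely where the paper invests no effort: surjectivity. Your Witt-style induction — extracting hyperbolic pairs compatible with the lattice filtration determined by $\alpha$ and matching the splitting values — is exactly the content of the cited Kaletha--Prasad result (for $H$), and it is nontrivial; you state it only as a plan (``The construction should proceed by ...''). If you carry this out, you are essentially reproving that proposition rather than using it. One small simplification you could make within your own framework: once $\alpha^\vee=\alpha+r$, replace $\alpha$ by the self-dual norm $\alpha-r/2$ and reduce to the self-dual case, which again lands you on the known orthogonal result. So the approach is sound, but as written it has a gap at the hardest step, whereas the paper's reduction disposes of it with a citation and a trivial shift.
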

\begin{proof}
	Denote by $H\coloneqq \mathrm{O}(V,\psi)$ the orthogonal group associated to $(V,\psi)$. Since $p\neq 2$, by \cite[Proposition 15.2.14 (3)]{kaletha2023bruhat}, the image of the inclusion $$\iota\colon \sB(H,K)\hookrightarrow \sB(\GL(V),K)\simeq \sN$$ is the set of self-dual norms on $V$. Note that $\sB(G,K)\simeq \sB(H,K)\times\BR$ (cf. \cite[\S 2.1]{tits1979reductive}). Here, $G(K)$ acts on the first factor via the $G_\ad(K)$-action on $\sB(H,K)\simeq\sB(G_\ad,K)$, and acts trivially on the second factor. We define \begin{equation} \label{iota1}
		\begin{split}
			\iota'\colon \sB=\sB(H,K)\times \BR &\lra \CN\\ (x, r) &\mapsto \iota(x)-r.
		\end{split}
	\end{equation}
	Set $T_H\coloneqq T\cap H$. Then $T_H$ is a maximal split torus in $H$, and \begin{flalign*}
		X_*(T_H)\simeq \cbra{(s_1,\ldots,s_{2n})\in\BZ^{2n}\ |\ s_1+s_{2n}=\cdots=s_n+s_{n+1}=0}.
	\end{flalign*}
	Let $b=(s_1,\ldots,s_{2n})$ be a point in the apartment  $X_*(T_H)\otimes\BR\sset \sB(H,K)$. By \cite[Lemma 15.2.11 (2),(3)]{kaletha2023bruhat},  The norm $\iota(b)$ is given by \begin{flalign}
		   \sum_{i=1}^{2n}x_ie_i\mapsto \inf\cbra{\val(x_i)-s_i\ |\ 1\leq i\leq 2n }.  \label{norm2}
	\end{flalign}
	Let $a=(r_1,\ldots,r_{2n})\in\CA$. Set $r\coloneqq (r_1+r_{2n})/2$. Write $a=(a-r,r)\in \sB(H,K)\times \BR$. Using \eqref{iota1} and \eqref{norm2}, we obtain   the description of the associated norm $\alpha_a$ as in \eqref{alphaa}. 
	By construction, we have $\alpha_{a-r}=\alpha_a+r$. Since $\alpha_{a-r}\in \sB(H,K)$ is self-dual, we obtain that \begin{flalign*}
		\alpha_a^\vee = (\alpha_{a-r}-r)^\vee=\alpha_{a-r}^\vee+r=\alpha_{a-r}+r=\alpha_a+2r=\alpha_a+(r_1+r_{2n}).
	\end{flalign*}
\end{proof}

Recall that a \dfn{(periodic) lattice chain} of $V$ is a non-empty set $L_\bullet$ of lattices in $V$ such that lattices in $L_\bullet$ are totally ordered with respect to the inclusion relation, and $\lambda L\in L_\bullet$ for $\lambda\in K\cross$ and $L\in L_\bullet$.  A \dfn{graded lattice chain} is a pair $(L_\bullet,c)$, where $L_\bullet$ is a lattice chain of $V$ and $c\colon L_\bullet\ra \BR$ is a strictly decreasing function such that for any $\lambda\in K$ and $L\in L_\bullet$, we have $$c(\lambda L)=\val(\lambda)+c(L).$$ The function $c$ is called a \dfn{grading} of $L_\bullet$. Denote by $\CGLC$ the set of graded lattice chains of $V$. Then $\CGLC$ carries a natural $\GL(V)(K)$-action: for $(L_\bullet,c)\in \CGLC$ and $g\in\GL(V)(K)$, define $g(L_\bullet,c)\coloneqq (gL_\bullet,gc)$, where $gL_\bullet$ consists of lattices of the form $gL$  for $L\in L_\bullet$, and $(gc)(gL)\coloneqq c(L)$ for $L\in L_\bullet$. 

Let $(L_\bullet,c)$ be a graded lattice chain. The dual graded lattice chain is $(L_\bullet^\vee,c^\vee),$ where $L_\bullet^\vee$ is the set of the lattices of the form $L^\vee\coloneqq \cbra{x\in V\ |\ \psi(x,L)\sset \CO_K}$ for $L\in L_\bullet$, and $c^\vee(L^\vee)\coloneqq -c(L^-)-1,$ where $L^-$ is the smallest member of $L_\bullet$ that properly contains $L$. We say $(L_\bullet,c)$ is \dfn{almost self-dual} if $(L_\bullet,c)=(L_\bullet^\vee,c^\vee)+r$ for some constant $r\in\BR$. If moreover $r=0$, then we say $(L_\bullet,c)$ is self-dual.

\begin{prop}\label{prop-grlattice}
   \begin{enumerate}
   	\item There exists a $\GL(V)(K)$-equivariant bijection between $\CN$ and $\CGLC$. More precisely, given $\alpha\in\CN$, we can associate a graded lattice chain $(L_\alpha,c_\alpha)$, where $L_\alpha$ is the set of following lattices $$L_{\alpha,r}=\cbra{x\in V\ |\ \alpha(x)\geq r}, \text{\ for\ }r\in \BR,$$ and the grading $c_\alpha$ is defined by $$c_\alpha(L_{\alpha,r})=\inf_{x\in L_{\alpha,r}}\alpha(x).$$
	Conversely, given a graded lattice chain $(L_\bullet,c)\in\CGLC$, we can associate a norm $$\alpha_{(L_\bullet,c)}(x)\coloneqq  \sup\cbra{c(L)\ |\ x\in L \text{\ and\ }L\in L_\bullet}.$$
	We say that the norm $\alpha$ and the graded lattice chain $(L_\alpha,c_\alpha)$ in the above bijection \dfn{correspond to} each other.
	\item There exists a $G(K)$-equivariant bijection between $\sB$ and the set of almost self-dual graded lattice chains. 
   \end{enumerate}
\end{prop}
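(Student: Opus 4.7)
The plan is to prove (1) by direct construction of the two maps and verification that they are mutually inverse and $\GL(V)(K)$-equivariant; then (2) follows by combining part (1) with Proposition \ref{prop-Bnorms} and checking that this bijection intertwines the two notions of duality.

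For the forward direction of (1), given $\alpha\in\CN$, I would first invoke the standard splittability result from Bruhat--Tits theory for $\GL(V)$ (see, e.g., \cite[Proposition 15.1.31]{kaletha2023bruhat}): there is a $K$-basis $(v_i)$ of $V$ such that $\alpha\rbra{\sum_i a_iv_i}=\inf_i(\val(a_i)+\alpha(v_i))$. Two consequences follow. First, each sublevel set $L_{\alpha,r}=\bigoplus_i\pi^{\lceil r-\alpha(v_i)\rceil}\CO_K v_i$ is a genuine $\CO_K$-lattice of $V$. Second, $\alpha(V\setminus\cbra{0})$ lies in the discrete set $\bigcup_i(\alpha(v_i)+\BZ)\sset\BR$, so the infimum defining $c_\alpha(L_{\alpha,r})$ is attained. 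The periodicity $L_{\alpha,r+\val\lambda}=\lambda L_{\alpha,r}$ and the relation $c_\alpha(\lambda L)=\val\lambda+c_\alpha(L)$ follow directly from the norm axioms, while strict monotonicity of $c_\alpha$ on $L_\alpha$ is automatic. For the reverse direction, the three norm axioms for $\alpha_{(L_\bullet,c)}$ are a direct verification from the definition of a graded lattice chain. Mutual inverseness follows from unpacking the definitions: $\alpha(x)$ lies in the image of $\alpha$, so the supremum defining $\alpha_{(L_\alpha,c_\alpha)}(x)$ is attained at $L_{\alpha,\alpha(x)}$ and equals $\alpha(x)$; conversely, every $L\in L_\bullet$ coincides with the sublevel set of $\alpha_{(L_\bullet,c)}$ at level $c(L)$. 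The $\GL(V)(K)$-equivariance of both maps is evident from the constructions.

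For part (2), I would combine part (1) with Proposition \ref{prop-Bnorms} to obtain a $G(K)$-equivariant injection $\sB\hookrightarrow\CGLC$ and identify its image with the almost self-dual graded lattice chains. The crux is to show that if $\alpha\in\CN$ corresponds to $(L_\alpha,c_\alpha)\in\CGLC$, then $\alpha^\vee$ corresponds to the dual chain $(L_\alpha^\vee,c_\alpha^\vee)$. Unwinding the definition, $\alpha^\vee(x)\geq r$ is equivalent to $\val\psi(x,y)\geq r+\alpha(y)$ for every nonzero $y\in V$; by discreteness of the image of $\alpha$ this reduces, for each $L\in L_\alpha$, to the condition $\psi(x,L)\sset\pi^{\lceil r+c_\alpha(L)\rceil}\CO_K$, i.e., $x\in\pi^{\lceil r+c_\alpha(L)\rceil}L^\vee$. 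A direct manipulation using the definition of $L^-$ and the ceiling function then identifies the intersection $\bigcap_{L\in L_\alpha}\pi^{\lceil r+c_\alpha(L)\rceil}L^\vee$ with the appropriate member of $L_\alpha^\vee$, and shows that the induced grading matches the prescription $c^\vee(L^\vee)=-c_\alpha(L^-)-1$. With this correspondence in hand, $\alpha^\vee=\alpha+r$ translates exactly into $(L_\alpha^\vee,c_\alpha^\vee)=(L_\alpha,c_\alpha)+r$, yielding the desired characterization.

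The main obstacle will be the bookkeeping in the duality comparison, specifically accounting for the offset $-c(L^-)-1$ in the definition of $c^\vee$. This offset arises precisely because the constraint $\alpha^\vee(x)\geq r$ is naturally cut out by a ceiling, which, when translated into the lattice chain picture, corresponds to passing from $L$ to its immediate predecessor $L^-$ in $L_\bullet$. Keeping the indexing between the discrete image of $\alpha$ and the combinatorial structure of $L_\bullet$ consistent is the only subtle point; once this is settled, the $G(K)$-equivariance and the bijectivity statement in (2) follow formally from Proposition \ref{prop-Bnorms} and part (1).
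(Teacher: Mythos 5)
Your proposal is correct, and it follows the same route the paper takes: establish (1) as a bijection between norms and graded lattice chains, then combine it with Proposition \ref{prop-Bnorms} for (2). The paper's own proof is essentially a pair of citations—part (1) is deferred entirely to \cite[Proposition 15.1.21]{kaletha2023bruhat}, and part (2) is stated to ``follow from (1) and Proposition \ref{prop-Bnorms}''—so what you have done is supply the details the paper omits. Your reconstruction of (1) via the splittability result \cite[Proposition 15.1.31]{kaletha2023bruhat} is the standard argument underlying the cited proposition and is fine.

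The one place where you add genuine content beyond the paper's terse statement is in (2): you correctly identify that the implication from ``(1) + Proposition \ref{prop-Bnorms}'' to the claim is not purely formal, because one must verify that the bijection of (1) intertwines the dual-norm operation $\alpha\mapsto\alpha^\vee$ with the dual-chain operation $(L_\bullet,c)\mapsto(L_\bullet^\vee,c^\vee)$ before ``almost self-dual'' transfers from one side to the other. Your sketch of this duality comparison (translating $\alpha^\vee(x)\geq r$ into membership in $\pi^{\lceil r+c_\alpha(L)\rceil}L^\vee$ across the chain, and tracking the ceiling to recover the $-c(L^-)-1$ shift in $c^\vee$) is the right computation, though you should be a little careful when replacing the pointwise constraint $\val\psi(x,y)\geq r+\alpha(y)$ for all $y$ with $\alpha(y)=c$ by the lattice constraint $\psi(x,L_{\alpha,c})\subset\pi^{\lceil r+c\rceil}\CO_K$: this passage uses that the constraints for larger values $c'>c$ are automatically at least as strong (since $\lceil r+c'\rceil\geq\lceil r+c\rceil$), so the union of constraints over $y\in L_{\alpha,c}$ really does collapse to the single inclusion for the minimal attained value $c=c_\alpha(L)$. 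With that spelled out, your argument is complete and matches the intended proof.
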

\begin{proof}
	(1) See \cite[Proposition 15.1.21]{kaletha2023bruhat}. 
	
	(2) It follows from (1) and Proposition \ref{prop-Bnorms}.
\end{proof}

\subsection{Parahoric subgroups}
\begin{defn}
     Set \begin{flalign*}
     	\lambda_{0'} &\coloneqq \CO_K\pair{t\inverse e_1,e_2,\ldots,e_{2n-1},t e_{2n}}, \\
     	\lambda_i &\coloneqq \CO_K\pair{t\inverse e_1,\ldots,t\inverse e_i,e_{i+1},\ldots,e_{2n}}, 0\leq i\leq n \text{\ and\ } i\neq 1,n-1, \\
     	\lambda_{n'} &\coloneqq \CO_K\pair{t\inverse e_1,\ldots,t\inverse e_{n-1}, e_n, t\inverse e_{n+1},e_{n+2},\ldots,e_{2n}}.
     \end{flalign*}
     Denote \begin{flalign*}
     	\sI\coloneqq \cbra{0,0',2,3,\ldots,n-2,n,n'}.
     \end{flalign*} 
     For $j\in \sI$, denote by $\lambda_{-j}=\lambda_j^\psi$ the dual lattice of $\lambda_j$ with respect to $\psi$.  We have \begin{flalign*}
     	   \lambda_{-j}\sset \lambda_j\sset t\inverse\lambda_{-j}.
     \end{flalign*}
     Note that $\lambda_0=\lambda_{-0}$, $\lambda_{0'}=\lambda_{-0'}$, $\lambda_{-n}=t\lambda_n$, and $\lambda_{-n'}=t\lambda_{n'}$.
     For $\ell=2nd\pm j$ for some $d\in\BZ$ and $j\in \sI$, define $\lambda_\ell\coloneqq t^{-d}\lambda_{\pm j}$. We obtain a self-dual lattice chain \begin{flalign*}
     	 \lambda_{\cbra{j}}\coloneqq \cbra{\lambda_\ell}_{\ell\in \cbra{2n\BZ\pm j}}.
     \end{flalign*}
\end{defn}

The following lemma is straightforward to verify. 
\begin{lemma} \label{lem-normlattice}
    \begin{enumerate}
    	\item Under the identification in Proposition \ref{prop-Bnorms}, the point $a_j$ ($j\in\sI$) in \eqref{vertices} corresponds to the self-dual norm  \begin{flalign*}
    		  \alpha_j\colon V \lra \BR,\quad \sum_{i=1}^{2n}x_ie_i \mapsto \inf\cbra{\val(x_i)-a_j(i)\ |\ 1\leq j\leq 2n }.
    	\end{flalign*} 
    	\item Under the identifications in Proposition \ref{prop-Bnorms} and Proposition \ref{prop-grlattice} (1), the self-dual graded lattice chain corresponding to the vertex $a_j$ ($j\in\sI'$) in \eqref{vertices} is given by \begin{flalign*}
		  (\lambda_{\cbra{j}}, c_{\cbra{j}}),
	\end{flalign*}
	where the grading $c_{\cbra{j}}$ is 
	\begin{flalign*}
		  c_{\cbra{j}}(\lambda_\ell) = \begin{cases}
		  	 -d -1/2 \quad &\text{if $\ell=2nd+j$ and $j\neq 0,0'$,}\\ -d &\text{if $\ell=2nd-j$ and $j\neq n,n'$}.
		  \end{cases}
	\end{flalign*}
    \end{enumerate}
\end{lemma}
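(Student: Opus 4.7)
The plan is to verify both parts by direct substitution into the explicit formulas of Proposition \ref{prop-Bnorms} and Proposition \ref{prop-grlattice}(1).

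Part (1) is essentially immediate. Applying formula \eqref{alphaa} with $a = a_j$ gives precisely the norm $\alpha_j$ displayed in the statement. Each of the vertices listed in \eqref{vertices} satisfies $r_1 + r_{2n} = 0$, so by the final assertion of Proposition \ref{prop-Bnorms} one has $\alpha_j^\vee = \alpha_j$, and self-duality is automatic.

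For part (2), I extract the lattice chain from $\alpha_j$ using Proposition \ref{prop-grlattice}(1). For $x = \sum_{i=1}^{2n} x_i e_i \in V$, the condition $\alpha_j(x) \geq r$ is equivalent to $\val(x_i) \geq \lceil r + a_j(i) \rceil$ for all $i$, whence
$$L_{\alpha_j, r} = \bigoplus_{i=1}^{2n} t^{\lceil r + a_j(i) \rceil} \CO_K e_i.$$
Since the coordinates $a_j(i)$ take values in $\{0, \pm 1, \pm 1/2\}$, this lattice depends on $r$ only through a discrete set of jump points. A short case analysis on $j \in \sI$ then identifies the resulting chain. When $j \in \{0, 0'\}$, all coordinates of $a_j$ are integers, so the chain jumps only at $r \in \BZ$, and plugging in $r = -d$ gives $L_{\alpha_j, -d} = t^{-d}\lambda_j$. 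When $j \in \{2, \ldots, n-2, n, n'\}$, half-integer coordinates appear, so jumps occur at $r \in \tfrac{1}{2}\BZ$; taking $r = -d$ recovers $t^{-d}\lambda_{-j}$ while $r = -d - 1/2$ recovers $t^{-d}\lambda_j$. In both cases the output matches $\lambda_{\{j\}}$ after rewriting $\lambda_{2nd \pm j} = t^{-d}\lambda_{\pm j}$.

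The grading $c_{\alpha_j}(L) = \inf_{x \in L}\alpha_j(x)$ is then obtained by exhibiting a vector that saturates the binding coordinate inequality: for $L = t^{-d}\lambda_{-j}$ one takes $x = t^{-d} e_i$ for $i$ in the appropriate range, yielding $-d$; for $L = t^{-d}\lambda_j$ the analogous generator yields $-d - 1/2$. This produces the piecewise formula for $c_{\{j\}}$. The only care required is in tracking ceilings at half-integer values of $r$ and the identifications $\lambda_{-n} = t\lambda_n$, $\lambda_{-n'} = t\lambda_{n'}$ when $j \in \{n, n'\}$, which is precisely why those indices are excluded from the second branch of the grading formula. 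No step is conceptually difficult; the main obstacle is simply keeping the six vertex types straight.
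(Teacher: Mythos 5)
Your proof is correct, and it supplies the direct computation that the paper omits entirely — the paper merely asserts "The following lemma is straightforward to verify" and gives no proof. Your formula $L_{\alpha_j,r}=\bigoplus_i t^{\lceil r+a_j(i)\rceil}\CO_K e_i$ is exactly the right way to unwind Proposition \ref{prop-grlattice}(1), the observation that $r_1+r_{2n}=0$ for every $a_j$ handles self-duality via the final identity of Proposition \ref{prop-Bnorms}, and you correctly track the wrinkle at $j\in\{n,n'\}$ where $\lambda_{-j}=t\lambda_j$ causes the two branches to collapse to one.
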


\begin{corollary} \label{coro-stabvi}
    The stabilizer subgroup of $a_j$ ($j\in\sI$) in $G(K)$ equals \begin{flalign*}
    	P_{\cbra{j}}\coloneqq \cbra{g\in G(K)\ |\ g\lambda_j=\lambda_j}.
    \end{flalign*}	
\end{corollary}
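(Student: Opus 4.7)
The plan is to translate the assertion through the $G(K)$-equivariant chain of bijections
\begin{flalign*}
    \sB \hookrightarrow \CN \simto \CGLC
\end{flalign*}
supplied by Proposition \ref{prop-Bnorms} and Proposition \ref{prop-grlattice}(1). By Lemma \ref{lem-normlattice}(2), this identifies $a_j$ with the self-dual graded lattice chain $(\lambda_{\cbra{j}}, c_{\cbra{j}})$, so equivariance reduces the problem to showing that the stabilizer in $G(K)$ of that graded lattice chain equals $P_{\cbra{j}}$.

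First I would observe that since $c_{\cbra{j}}$ is strictly decreasing along the chain, stabilizing the pair $(\lambda_{\cbra{j}}, c_{\cbra{j}})$ is equivalent to fixing each individual lattice $\lambda_\ell$ for $\ell \in 2n\BZ \pm j$. In that form the inclusion $\mathrm{Stab}_{G(K)}(\lambda_{\cbra{j}}, c_{\cbra{j}}) \subseteq P_{\cbra{j}}$ is tautological.

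The substantive direction is the reverse: given $g \in G(K)$ with $g\lambda_j = \lambda_j$, one must show $g$ fixes every $\lambda_\ell$ in the chain. The key preliminary I would establish is that $c(g) \in \CO_K^\times$; this is because $g|_{\lambda_j}$ is an $\CO_K$-linear automorphism of a free $\CO_K$-module of rank $2n$, forcing $\det g \in \CO_K^\times$, and the similitude identity $\det(g)^2 = c(g)^{2n}$ then yields $\val(c(g)) = 0$. With this in hand, the two strands of the chain are handled separately: on the $+$ strand, $\lambda_{2nd + j} = t^{-d}\lambda_j$ immediately gives $g\lambda_{2nd + j} = \lambda_{2nd + j}$, while on the $-$ strand the similitude relation $\psi(gx, gy) = c(g)\psi(x,y)$ yields
\begin{flalign*}
    g(\lambda_j^\psi) = c(g)^{-1}(g\lambda_j)^\psi = c(g)^{-1}\lambda_{-j} = \lambda_{-j},
\end{flalign*}
the last equality using $c(g) \in \CO_K^\times$; scaling by $t^{-d}$ then handles every $\lambda_{2nd - j}$.

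The main obstacle is precisely this unit condition on $c(g)$: without it, $g$ would still preserve the chain setwise but could shift the dual strand $\{\lambda_{2nd - j}\}_d$ by $2n\val(c(g))$ indices, violating the grading. It is the similitude structure on $G$, together with the link between $\det g$ and $c(g)$, that rules this out; the rest of the argument is purely formal manipulation of the two strands.
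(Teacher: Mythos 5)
Your proof is correct and takes essentially the same route as the paper: translate via the $G(K)$-equivariant bijection to the stabilizer of $(\lambda_{\cbra{j}}, c_{\cbra{j}})$, then observe that the strictly decreasing grading forces $g$ to fix each lattice in the chain. The only place you go further than the paper is at the final step, where you explicitly justify $\val(c(g)) = 0$ via the determinant--similitude relation; the paper simply asserts the equality $\cbra{g : g\lambda_j = \lambda_j,\ g\lambda_{-j} = \lambda_{-j}} = P_{\cbra{j}}$ without comment.
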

\begin{proof}
	Since the bijection between $\sB$ and the set of almost self-dual graded lattice chains is $G(K)$-equivariant by Proposition \ref{prop-grlattice}, we obtain that the stabilizer subgroup of $a_j$ equals the stabilizer subgroup $C$ of $(\lambda_{\cbra{j}},c_{\cbra{j}})$. For $g\in C$, the condition that $g$ preserves the grading $c_{\cbra{j}}$ amounts to that $g$ sends $\lambda\in \lambda_{\cbra{j}}$ to $\lambda$. Therefore, the stabilizer subgroup equals \begin{flalign*}
		 \cbra{g\in G(K)\ |\ g\lambda_j=\lambda_j\text{\ and\ } g\lambda_{-j}=\lambda_{-j}} = P_{\cbra{j}}.
	\end{flalign*} 
\end{proof}

Recall that $\kappa\colon G^\circ(K)\ra \pi_1(G^\circ)$ denotes the Kottwitz homomorphism for $G^\circ$.  
\begin{defn}
	For a non-empty subset $J\sset \sI$, set \begin{flalign}
		P_J\coloneqq \cbra{g\in G(K)\ |\ g\lambda_j=\lambda_j, j\in J} \text{\ and\ } P_J^\circ\coloneqq P_J\cap \ker \kappa. \label{PJsubgroup}
	\end{flalign}
	If $J=\cbra{j}$ is a singleton, we simply write $P_j$ for $P_{\cbra{j}}$.
\end{defn}

\begin{defn}
	We say $P\sset G(K)$ is a \dfn{parahoric subgroup of $G(K)$} if $P$ is a parahoric subgroup of $G^\circ(K)$ in the usual sense of Bruhat--Tits theory (for connected reductive groups).  
\end{defn}

\begin{prop} \label{prop-paraIndex}
    \begin{enumerate}
    	\item The subgroup $P_J^\circ$ in \eqref{PJsubgroup} is a parahoric subgroup of $G(K)$. Any parahoric subgroup of $G(K)$ is $G^\circ(K)$-conjugate to a subgroup $P_J^\circ$ for some (not necessarily unique) $J\sset \sI$. 
    	\item The $G^\circ(K)$-conjugacy classes of maximal parahoric subgroups of $G^\circ(K)$ are in bijection with the set $\cbra{P^\circ_{j}\ |\ j=0 \text{\ or\ }2\leq j\leq \lfloor n/2\rfloor }$.
    \end{enumerate}
\end{prop}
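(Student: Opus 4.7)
My plan proceeds in two parts. For Part (1), I apply the Bruhat--Tits dictionary for the connected reductive group $G^\circ$: parahoric subgroups of $G^\circ(K)$ correspond bijectively to facets of $\sB(G^\circ, K) = \sB$, with the parahoric attached to a facet being the intersection of its stabilizer with $\ker \kappa$. By Corollary \ref{coro-stabvi}, the stabilizer of the vertex $a_j$ in $G(K)$ is $P_j$; since facet stabilizers are intersections of vertex stabilizers, the stabilizer of the facet $\sigma_J \subset \fa$ with vertex set $\{a_j\}_{j \in J}$ is $P_J$, so the associated parahoric is $P_J \cap \ker\kappa = P_J^\circ$. Standard transitivity of $G^\circ(K)$ on alcoves of $\sB$ then implies every parahoric is $G^\circ(K)$-conjugate to some $P_J^\circ$.

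For Part (2), each maximal parahoric is conjugate to some $P_j^\circ$ with $j \in \sI$ by Part (1), and I determine the $G^\circ(K)$-orbits on $\sI$ by constructing explicit conjugators. The corner identifications are realized by torus elements $t_r \in T(K) \subset G^\circ(K)$: for $r = (-1, 0^{(2n-2)}, 1)$ one has $t_r \cdot a_0 = a_{0'}$; for $r = (0^{(n)}, 1^{(n)})$ one has $t_r \cdot a_0 \equiv a_n$ modulo $\BR \cdot (1, \ldots, 1)$; and for $r = (0^{(n-1)}, 1, -1, 0^{(n-1)})$, $t_r \cdot a_n = a_{n'}$. Since $\ker\kappa$ is normal in $G^\circ(K)$, conjugation by each $t_r$ sends $P_j^\circ$ to the corresponding $P_{j'}^\circ$. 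For the flips $a_j \sim a_{n-j}$ with $2 \leq j \leq n-2$, I combine a Weyl element and a translation: choose $w \in S_{2n}^*$ mapping the nonzero support $[1, j] \cup [2n-j+1, 2n]$ of $a_j$ bijectively onto the zero support $[n-j+1, n+j]$ of $a_{n-j}$ (both being mirror-symmetric sets of cardinality $2j$); then $r \coloneqq a_{n-j} - w \cdot a_j + \tfrac{1}{2}(1, \ldots, 1)$ has integer entries and satisfies $r_i + r_{2n+1-i} = 1$ for all $i$, so $r \in X_*(T)$. The element $t^r w \in \wt{W}$ carries the vertex line of $a_j$ to that of $a_{n-j}$. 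If the natural $w$ is odd, I correct by composing with the transposition $(j+1, 2n-j) \in S_{2n}^*$, which fixes $a_j$ pointwise (both swapped positions lie in its zero block) and has odd sign, producing an even permutation, i.e., an element of $W^\circ = S_{2n}^\circ$.

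For the negative direction, I distinguish $P_j^\circ$ for distinct $j \in \{0, 2, 3, \ldots, \lfloor n/2\rfloor\}$ by the Dynkin type of the reductive quotient modulo the pro-unipotent radical. This type is obtained by removing the node $a_j$ from the local affine Dynkin diagram of $G^\circ$, which is of type $\tilde{D}_n$ with vertex set indexed by $\sI$: removing any of the four corners $a_0, a_{0'}, a_n, a_{n'}$ yields type $D_n$; removing $a_j$ for $2 \leq j \leq n-2$ yields type $D_j \times D_{n-j}$. For distinct $j, j' \in \{0, 2, \ldots, \lfloor n/2\rfloor\}$ the multisets $\{j, n-j\}$ differ, so the semisimple types differ, precluding conjugacy. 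The main technical step is the combinatorial construction of the permutation $w$ in the flip argument, which requires simultaneous control over the support condition and the parity constraint for membership in $S_{2n}^\circ$; all other steps reduce to explicit verifications in the coordinate description of $\CA$.
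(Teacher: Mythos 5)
Your proof of Part (1) matches the paper's argument essentially verbatim: identify $P_J$ as the pointwise stabilizer of the facet $\ff_J$ via Corollary~\ref{coro-stabvi}, cite Haines--Rapoport for parahoricity of $P_J \cap \ker\kappa$, and use transitivity of $G^\circ(K)$ on alcoves.

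For Part (2) you take a genuinely different route. The paper identifies the group $\Xi$ of automorphisms of the local Dynkin diagram $\Delta$ induced by the $G^\circ(K)$-action, computes it explicitly as $\BZ/4\BZ$ or $\BZ/2\BZ\times\BZ/2\BZ$ via the quotient $T(K)/(T(\CO_K)C(K)H_0)$, constructs generators $\tau_1,\tau_2 \in \Omega$, and then invokes Tits' result that $G^\circ(K)$-conjugacy classes of parahorics biject with $\Xi$-orbits on $\mathcal{P}^*(\Delta)$. This single statement handles both directions (conjugate / non-conjugate) at once, and, importantly, handles \emph{all} parahorics, not just the maximal ones --- the paper exploits this in the remark after the proposition. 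You instead prove the two directions separately: explicit torus conjugators for the corner identifications $0\sim 0'\sim n \sim n'$, an explicit element $t^r w \in \wt{W}^\circ$ for each flip $j \sim n-j$, and then the Dynkin type of the reductive quotient (type $D_n$ versus $D_j\times D_{n-j}$) as an invariant to rule out any further coincidences. Your conjugators check out: the listed $r$'s lie in $X_*(T)$, the associated $t_r$ have similitude factor making them land in $G^\circ(K)$, and the sign-correcting transposition $(j+1,2n-j)$ does lie in $S_{2n}^*$, fix $a_j$ pointwise, and flip parity. Your approach is more hands-on and avoids the abstract computation of $\Xi$, at the cost of needing a separate invariant-based argument for the negative direction and of not directly yielding the corresponding statement for non-maximal parahorics. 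As a small clarification worth spelling out in a full write-up: your conjugators $t_r$ do not lie in $\Omega$ (they move the base alcove), but this is harmless since you only need them to move one vertex to another; the paper's use of $\Omega$-representatives is tied to the $\Xi$-orbit formalism, which you bypass.
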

\begin{proof}
   (1)
    Note that any non-empty $J\sset \sI$ determines a facet $\ff_J$ in the base alcove $\fa$, where the vertices of the closure $\ol{\ff}_J$ are given by $\cbra{a_j+\BR(1,\ldots,1)\ |\ j\in J }$, notation as in \eqref{vertices}. 
	 By Corollary \ref{coro-stabvi}, the (pointwise) stabilizer subgroup of $\ff_J$ is equal to $P_J$. 
	Therefore,  $P_J^\circ$ is a parahoric subgroup of $G(K)$ by \cite[Proposition 3]{haines2008parahoric}.
	
	Note that any facet in the building $\sB$ is $G^\circ(K)$-conjugate to a facet $\ff$ in the alcove $\fa$. Set $J\coloneqq \cbra{j\in \sI\ |\ a_j\in \ov{\ff}}$. Then the parahoric subgroup attached to $\ff$ is equal to $P^\circ_J$. 
	
	(2) Let $G_{\mathrm{sc}}$ denote the simply connected cover of the derived subgroup $G_\der$. Let $T_{\mathrm{sc}}$ be the maximal torus of $G_\sc$.    Let $H_0$ denote the image of $T_{\mathrm{sc}}(K)$ in $T(K)$. Then \begin{flalign*}
		 H_0=(\ker\kappa)\cap T_\der(K).
	\end{flalign*} 
	Let $H_1$ (resp. $H_2$) be the subgroup of $T(K)$ generated by \begin{flalign*}
		t_1\coloneqq \diag(t\inverse,1^{(2n-2)},t)\ (\text{resp.\ } t_2\coloneqq \diag(1^{(n)},t^{(n)})). 
	\end{flalign*} 
	Note that the isomorphism in \eqref{Tlattice} takes $H_0T(\CO_K)$ to the coroot lattice $Q^\vee\sset X_*(T)$.
	Since the image of $H_0H_1H_2\sset T(K)$ under \eqref{Tlattice} generates $X_*(T)$,  we have \begin{flalign*}
		  T(K)=T(\CO_K)H_0H_1H_2.
	\end{flalign*}
	Denote by $\Delta$ the local Dynkin diagram of $G$. By \cite[\S 2.5]{tits1979reductive},  the $G^\circ$-action \begin{flalign}
		G^\circ \ra \Aut(\Delta)  \label{GDelta}
	\end{flalign}
	factors through the Kottwitz map $\kappa$, and the image $\Xi$ is isomorphic to $$T(K)/(T(\CO_K)C(K)H_0),$$ where $C(K)$ denotes the $K$-points of the center $C$ of $G$. It is straightforward to check that \begin{flalign*}
		\Xi \simeq \frac{T(\CO_K)H_0H_1H_2}{T(\CO_K)C(K)H_0}\simeq \begin{cases}
			\BZ/4\BZ\quad &\text{if $n$ is odd};\\
			\BZ/2\BZ\times \BZ/2\BZ &\text{if $n$ is even}.
		\end{cases}
	\end{flalign*}
	If $n$ is odd (resp. even), the generator(s) of $\Xi$ is $t_2$ (resp. $t_1$ and $t_2$). Set \begin{flalign*}
		\tau_1 &\coloneqq t_1(1,2n)(n,n+1)\in N_{G^\circ}(K),\\ \tau_2 &\coloneqq \begin{cases}
			t_2\sigma_2(n,n+1)\in N_{G^\circ}(K) \quad &\text{if $n$ is odd};\\ t_2\sigma_2\in N_{G^\circ}(K) &\text{if $n$ is even}.
		\end{cases} 
	\end{flalign*}
	Here, $\sigma_2\in S_{2n}^*$ is the permutation sending $(x_1,\ldots,x_{2n})$ to $(x_{n+1},\ldots,x_{2n},x_1,\ldots,x_n)$. Then $\tau_i\in \Omega$ stabilizes the base alcove $\fa$, and $\kappa(\tau_i)=\kappa(t_i)$ for $i=1,2$. Hence, $\Xi$ is also generated by (the actions of) $\tau_i$. Let us identify the vertices of $\Delta$ with the set $\tcbra{\lambda_{\cbra{j}}}_{j\in \sI}$ of self-dual lattice chains $\lambda_{\cbra{j}}$ indexed by $\sI$. The $\Xi$-action on $\Delta$ can be explicitly described in the following way. 
	\begin{enumerate}[label=(\roman*)]
		\item If $n$ is odd, then $\Xi=\pair{\tau_2}\simeq\BZ/4\BZ$. We have: $\tau_2\lambda_0=\lambda_{-n}$, $\tau_2\lambda_n=\lambda_{-0'}$, $\tau_2\lambda_{0'}=\lambda_{-n'}$, $\tau_2\lambda_{n'}=\lambda_{-0}$, and $\tau_2\lambda_i=\lambda_{-(n-i)}$ for $2\leq i\leq n-2$.
		\item If $n$ is even, then $\Xi=\pair{\tau_1,\tau_2}\simeq\BZ/2\BZ\times \BZ/2\BZ$. We have:\begin{itemize}
			\item $\tau_1\lambda_{0}=\lambda_{0'}$, $\tau_1\lambda_{0'}=\lambda_{0}$, and $\tau_1\lambda_j=\lambda_j$ for $j\in \sI\backslash\cbra{0,0'}$,
			\item $\tau_2\lambda_0=\lambda_{-n}$, $\tau_2\lambda_n=\lambda_{-0}$, $\tau_2\lambda_{0'}=\lambda_{-n'}$, $\tau_2\lambda_{n'}=\lambda_{-0'}$, and $\tau_2\lambda_i=\lambda_{-(n-i)}$ for $2\leq i\leq n-2$.
		\end{itemize} 
	\end{enumerate}
	Note that for lattices $\Lambda,\Lambda'$ in $V$ and $g\in G^\circ(K)$, if $g\Lambda=\Lambda$, then $g\Lambda^\psi=\Lambda^\psi$; and if $g\Lambda=\Lambda'$, then $g\Stab(\Lambda)g\inverse=\Stab(\Lambda')$, where $\Stab(\Lambda)=\cbra{h\in G(K)\ |\ h\Lambda=\Lambda}$.  
	By \cite[\S 2.5]{tits1979reductive}, the $G^\circ(K)$-conjugacy classes of parahoric subgroups are in bijection with the $\Xi$-orbits on the set of non-empty subsets of $\Delta$. By the above $\Xi$-action, we obtain that any maximal parahoric subgroup is $G^\circ(K)$-conjugate to some $P_{{j}}^\circ$, where $j=0$ or $2\leq j\leq \lfloor n/2\rfloor$.
\end{proof}

\begin{remark}
	It follows from the proof of Proposition \eqref{prop-paraIndex} that the $G^\circ(K)$-conjugacy classes of parahoric subgroups are in bijection with the orbit set $\CP^*(\Delta)/\Xi$, where $\CP^*(\Delta)$ denotes the set of non-empty subsets of $\Delta$.
\end{remark}

	Set \begin{flalign*}
		a_1\coloneqq \rbra{-1/2,0^{(2n-2)},1/2} \text{\ and\ } a_{n-1}\coloneqq \rbra{(-1/2)^{(n-1)},0,0,(1/2)^{(n-1)}}.
	\end{flalign*}
	Then $a_1$ is the midpoint of $a_0$ and $a_{0'}$; $a_{n-1}$ is the midpoint of $a_n$ and $a_{n'}$. Define \begin{flalign*}
		\lambda_1\coloneqq \CO_K\pair{t\inverse e_1,e_2,\ldots,e_{2n}},\ \lambda_{n-1}\coloneqq \CO_K\pair{t\inverse e_1,t\inverse e_2,\ldots,t\inverse e_{n-1},e_n,\ldots,e_{2n}}.
	\end{flalign*}
Then as in Corollary \ref{coro-stabvi}, the stabilizer subgroup of $a_1$ (resp. $a_{n-1}$) is equal to \begin{flalign*}
	  P_1\coloneqq \cbra{g\in G(K)\ |\ g\lambda_1=\lambda_1} \text{\ (resp. } P_{n-1}\coloneqq \cbra{g\in G(K)\ |\ g\lambda_{n-1}=\lambda_{n-1}}).
\end{flalign*}

For a non-empty subset $I\sset [0,n]$, set \begin{flalign*}
	P_I\coloneqq \cbra{g\in G(K)\ |\ g\lambda_i=\lambda_i, i\in I} \text{\ and\ } P_I^\circ\coloneqq P_I\cap \ker \kappa.
\end{flalign*}
We can easily reformulate Proposition \ref{prop-paraIndex} (1) as follows.
\begin{prop}\label{prop-Jform}
Let $I\sset [0,n]$ be a non-empty subset.
	The subgroup $P_I^\circ$ is a parahoric subgroup of $G^\circ(K)$. Any parahoric subgroup of $G(K)$ is $G^\circ(K)$-conjugate to a subgroup $P_I^\circ$ for some (not necessarily unique) $I\sset [0,n]$. 
	
	Moreover, the $G^\circ(K)$-conjugacy classes of maximal parahoric subgroups of $G^\circ(K)$ are in bijection with the set $\cbra{P^\circ_{j}\ |\ j=0 \text{\ or\ }2\leq j\leq \lfloor n/2\rfloor }$.
\end{prop}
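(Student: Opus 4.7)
The plan is to deduce Proposition~\ref{prop-Jform} from Proposition~\ref{prop-paraIndex} via a systematic reindexing of the vertex set. Define the map $I\mapsto J(I)$ from nonempty subsets of $[0,n]$ to nonempty subsets of $\sI$ by
\begin{flalign*}
J(I)\coloneqq \bigl(I\cap\cbra{0,2,3,\ldots,n-2,n}\bigr)\cup \bigl(\cbra{0,0'}\text{ if } 1\in I\bigr)\cup\bigl(\cbra{n,n'}\text{ if } n-1\in I\bigr),
\end{flalign*}
and aim to show $P_I^\circ=P_{J(I)}^\circ$, which will automatically make $P_I^\circ$ a parahoric by Proposition~\ref{prop-paraIndex}~(1). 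Since $P_I^\circ=\bigcap_{i\in I}P_i^\circ$ and the identities $P_i^\circ=P_{\cbra{i}}^\circ$ for $i\in\cbra{0,2,\ldots,n-2,n}$ are immediate, everything reduces to the two key identifications
\begin{flalign*}
    P_1^\circ=P_{\cbra{0,0'}}^\circ \text{\ and\ } P_{n-1}^\circ=P_{\cbra{n,n'}}^\circ.
\end{flalign*}

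I would establish the first of these (the second follows by the analogous argument). A direct computation gives $\lambda_1^\psi=\lambda_0\cap\lambda_{0'}$ and $\lambda_1=\lambda_0+\lambda_{0'}$; moreover $\lambda_0$ and $\lambda_{0'}$ are the only two self-dual lattices strictly between $\lambda_1^\psi$ and $\lambda_1$, because the induced symmetric form on the two-dimensional $k$-space $\lambda_1/\lambda_1^\psi$ is a hyperbolic plane, which has exactly two isotropic lines. Consequently, any $g\in P_1$ with $g\lambda_1^\psi=\lambda_1^\psi$ must either fix or swap the pair $\cbra{\lambda_0,\lambda_{0'}}$. For $g\in P_1^\circ$ one has $c(g)\in\CO_K\cross$ (because $P_1^\circ\sset\ker\kappa$), which forces $g\lambda_1^\psi=\lambda_1^\psi$. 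The swap coset is represented by $\tau_1=t_1(1,2n)(n,n+1)$ from the proof of Proposition~\ref{prop-paraIndex}, for which $\kappa(\tau_1)=(0,1)\ne 0$. Since every element of $P_{\cbra{0,0'}}\sset P_0$ has trivial Kottwitz invariant (an easy Cartan-decomposition argument confines the torus part to $T(\CO_K)$), the swap coset misses $\ker\kappa$ entirely, yielding $P_1^\circ=P_{\cbra{0,0'}}^\circ$.

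For the converse statement, let $P$ be a parahoric subgroup of $G(K)$. By Proposition~\ref{prop-paraIndex}~(1), $P$ is $G^\circ(K)$-conjugate to $P_J^\circ$ for some $J\sset\sI$, and it remains to exhibit $I\sset[0,n]$ with $J(I)$ in the $\Xi$-orbit of $J$. This is a finite verification based on the explicit $\Xi$-action on $\sI$ recorded in the proof of Proposition~\ref{prop-paraIndex}: $\Xi$ acts transitively on the four distinguished vertices $\cbra{0,0',n,n'}$ (via $\pair{\tau_2}$ when $n$ is odd and $\pair{\tau_1,\tau_2}$ when $n$ is even) and as $i\mapsto n-i$ on the middle indices $\cbra{2,\ldots,n-2}$, so one can always conjugate $J$ into the image of $J(\cdot)$. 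The ``moreover'' clause on maximal parahorics is then a direct transcription of Proposition~\ref{prop-paraIndex}~(2), once one observes that $P_1^\circ$ and $P_{n-1}^\circ$ are strictly contained in hyperspecial parahorics and therefore not maximal. The main obstacle of the argument is the Kottwitz calculation isolating the swap elements in $P_1^\circ$; once that is settled, the rest amounts to bookkeeping with Proposition~\ref{prop-paraIndex} and the $\Xi$-action.
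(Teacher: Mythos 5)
Your lattice-theoretic proof of the key identification $P_1^\circ = P_{\cbra{0,0'}}^\circ$ (and, by symmetry, $P_{n-1}^\circ = P_{\cbra{n,n'}}^\circ$) is correct and takes a genuinely different route from the paper. The paper's (implicit) argument is building-theoretic: $a_1$ is defined as the midpoint of the edge with vertices $a_0,a_{0'}$, so $\operatorname{Stab}(a_1)$ equals the pointwise stabilizer of that facet, whence $P_1=P_{\cbra{0,0'}}$, and intersecting with $\ker\kappa$ finishes. Your alternative --- identifying $\lambda_0,\lambda_{0'}$ as the two isotropic lines of the hyperbolic plane $\lambda_1/\lambda_1^\psi$, so that any $g\in P_1^\circ$ permutes $\cbra{\lambda_0,\lambda_{0'}}$, and then ruling out the swap by the $\BZ/2\BZ$-component of the Kottwitz map --- is clean and more self-contained. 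One small imprecision: $\kappa$ is only defined on $G^\circ(K)$, so ``every element of $P_{\cbra{0,0'}}$ has trivial Kottwitz invariant'' should be replaced by $P_{\cbra{0,0'}}\cap G^\circ(K)\sset\ker\kappa$; the rest of your argument is unchanged.

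The converse step has a genuine gap. You claim that $\Xi$-transitivity on the four special vertices $\cbra{0,0',n,n'}$, together with $i\mapsto n-i$ on the middle indices, ``lets one always conjugate $J$ into the image of $J(\cdot)$.'' Transitivity on \emph{vertices} does not control the $\Xi$-orbits on \emph{subsets}, and this is where the verification actually runs into trouble for $n$ even. In fact the paper's record of the $\tau_1$-action is itself off: a direct check gives $\tau_1 a_n = a_{n'}$ (equivalently $\tau_1\lambda_n=\lambda_{n'}$), so $\tau_1$ acts on $\sI$ as $(0\,0')(n\,n')$, not as $(0\,0')$. This is forced by the group structure: $(0\,0')$ does not commute with the $\tau_2$-action $(0\,n)(0'\,n')$, while $\Xi\simeq\BZ/2\BZ\times\BZ/2\BZ$ for $n$ even. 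With the corrected action, the $\Xi$-orbit of $\cbra{0,n'}\sset\sI$ is $\cbra{\cbra{0,n'},\,\cbra{0',n}}$, and neither member lies in the image of $J(\cdot)$ (one contains $n'$ without $n$, the other $0'$ without $0$). So $P_{\cbra{0,n'}}^\circ$ is not $G^\circ(K)$-conjugate to any $P_I^\circ$, and the converse part of the proposition as stated appears to fail when $n$ is even; you would need $G(K)$-conjugation (by $\tau$, which does swap $n\leftrightarrow n'$ while fixing $0,0'$) to reach it. For $n$ odd the $4$-cycle $\tau_2=(0\,n\,0'\,n')$ sends $\cbra{0,n'}$ to $\cbra{0,n}$, so the verification goes through there. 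The ``moreover'' clause on maximal parahorics is unaffected, since the $\Xi$-orbits on single vertices are as the paper describes, but the ``finite verification'' you invoke needs to be carried out explicitly, and as stated it does not close.
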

\begin{remark}\label{in-i}
	By the proof of Proposition \ref{prop-paraIndex}, $P_j^\circ$ and $P_{n-j}^\circ$ are conjugate by $\tau_2\in G^\circ(K)$. Hence, $G^\circ(K)$-conjugacy classes of maximal parahoric subgroups of $G^\circ(K)$ are also in bijection with the set $\cbra{P^\circ_{j}\ |\ j=n \text{\ or\ }\lceil n/2\rceil \leq j\leq n-2 }$.
\end{remark}

The advantage of the formulation in Proposition \ref{prop-Jform} is that, for any $J\sset [0,n]$, the collection of lattices \begin{flalign}
	\lambda_J\coloneqq \cbra{\lambda_\ell}_{\ell\in 2n\BZ\pm J}  \label{eq-lambdaI}
\end{flalign}
forms a self-dual lattice chain.

\begin{defn}\label{defn-pseudomax}
	Let $I\sset [0,n]$ be a non-empty subset. We say $I$ is \dfn{pseudo-maximal} (resp. \dfn{maximal}) if $I=\cbra{i}$ for some $0\leq i\leq n$ (resp. $i=0,n$ or $2\leq i\leq n-2$). 
	
	We say a parahoric subgroup $P$ of $G(K)$ is \dfn{pseudo-maximal} if $P$ is $G^\circ(K)$-conjugate to $P_i^\circ$ (see Proposition \ref{prop-Jform})  for some $0\leq i\leq n$. 
\end{defn}

\begin{defn}
	We say a parahoric subgroup $P$ of $G(K)$ is \dfn{pseudo-maximal} if $P$ is $G^\circ(K)$-conjugate to $P_i^\circ$ (see Proposition \ref{prop-Jform})  for some $0\leq i\leq n$.
\end{defn}

By Proposition \ref{prop-Jform}, a pseudo-maximal parahoric subgroup $P$ of $G(K)$ is maximal if and only if $P$ is not $G^\circ(K)$-conjugate to $P_1$ or $P_{n-1}$.

\begin{remark}
	In the literature (e.g. \cite{pappas2022regular,howard2017rapoport}), people also use vertex lattices to describe pseudo-maximal parahoric subgroups. In our setting, the standard lattice $\Lambda_i\sset V$ gives a vertex lattice such that $\pi\Lambda_i\subset \Lambda_i^\psi\sset \Lambda_i$ in $V$ and $\dim_k(\Lambda_i/\Lambda_i^\psi)=2i$.  
\end{remark}

\section{Naive and spin local models} \label{sec-naivespinmod}
In this section, we first review the constructions of naive and spin local models following \cite{rapoport1996period,pappas2009local,smithling2011topological}. We then identify their special fibers with unions of Schubert cells in an affine flag variety. These Schubert cells are indexed by certain permissible subsets in the Iwahori-Weyl groups.

\subsection{}  \label{subsec-defnmodels}
In this subsection, we work over the field $F$ in \S \ref{subsec-notation-intro}. As in \S \ref{sec-para}, we have the split orthogonal similitude group $G\coloneqq \GO(V,\psi)$ attached to a symmetric space $(V,\psi)$ of $F$-dimension $2n$, which we assume split, with distinguished basis $e_1,\ldots,e_{2n}$. In this case, we will denote the standard lattice chain \eqref{eq-lambdaI} by $\Lambda_I$ for a non-empty subset $I\sset [0,n]$. 

Let $\CL$ be a self-dual periodic lattice chain of $V$. For each $\Lambda\in\CL$, we let $$\Lambda^\psi\coloneqq \cbra{x\in V\ |\ \psi(x,\Lambda)\sset \CO } $$ denote the dual lattice of $\Lambda$ with respect to $\psi$.  

\begin{defn}[{cf. \cite{rapoport1996period,smithling2011topological}}] \label{defn-naive}
    The naive local model $\RM^\naive_\CL$ is the functor \[\Sch_{/\CO}^\op\ra \Sets \] sending a scheme $S$ over $\CO$ to the set of $\CO_S$-modules $(\CF_\Lambda)_{\Lambda\in \CL}$ such that 
    \begin{itemize}
        \item [LM1.] for any $\Lambda\in \CL$, Zariski locally on $S$, $\CF_\Lambda$ is a direct summand of $\Lambda_S\coloneqq \Lambda\otimes_\CO\CO_S$ of rank $n$;
        \item[LM2.] for any $\Lambda\in \CL$, the perfect pairing \[\psi\otimes 1\colon \Lambda_S\times \Lambda^\psi_S \ra \CO_S \] induced by $\psi$ satisfies $(\psi\otimes 1)(\CF_\Lambda, \CF_{\Lambda^\psi})=0$;
        \item[LM3.] for any inclusion $\Lambda\sset \Lambda'$ in $\CL$, the natural map $\Lambda_S\ra \Lambda'_S$ induced by $\Lambda\hookrightarrow \Lambda'$ sends $\CF_\Lambda$ to $\CF_{\Lambda'}$; and the isomorphism $\Lambda_S\simto (\pi\Lambda)_S$ induced by $\Lambda\overset{\pi}{\ra}\pi\Lambda$ identifies $\CF_\Lambda$ with $\CF_{\pi\Lambda}$.
    \end{itemize}
\end{defn}
The functor $\RM^\naive_\CL$ is representable by a projective scheme over $\CO$, which we will also denote by $\RM^\naive_\CL$. We have \begin{equation*}
    \RM^\naive_{\CL,F} \simeq \OGr(n,V), 
\end{equation*} where $\OGr(n,V)$ denotes the orthogonal Grassmannian of (maximal) totally isotropic $n$-dimensional subspaces of $V$. In particular, the generic fiber of $\RM^\naive_\CL$
has dimension $n(n-1)/2$, and consists of two connected components $\OGr(n,V)_\pm$, which are isomorphic to each other. We can identify $\OGr(n,V)_\pm$ with the flag variety $G^\circ/P_{\mu_\pm}$, where $P_{\mu_\pm}$ denotes the parabolic subgroup associated to a minuscule cocharacter $\mu_\pm$ given by \begin{flalign}
    \mu_+\coloneqq (1^{(n)},0^{(n)}) \text{\ and\ } \mu_-\coloneqq (1^{(n-1)},0,1,0^{(n-1)}). \label{cochar}
\end{flalign}

In \cite[\S 7]{pappas2009local}, Pappas and Rapoport defined an involution operator $a$ on $\wedge^{n}_FV$ inducing a decomposition  \begin{flalign}
	\wedge^{n}_FV= W_+\oplus W_-, \label{Wdecomp}
\end{flalign} where $W_\pm$ denotes the $\pm 1$-eigenspace for $a$. For an $\CO$-lattice $\Lambda$ in $V$, set \[(\wedge^{n}_\CO\Lambda)_\pm \coloneqq \wedge^{n}_\CO\Lambda\cap W_\pm.  \]
Then we define a refinement of $\RM^\naive_\CL$ as follows, see \cite[\S 2.3]{smithling2011topological}.


\begin{defn}\label{defn-spin}
    The spin local model $\RM^\pm_\CL$ is the functor \[\Sch_{/\CO}^\op\ra \Sets \] sending a scheme $S$ over $\CO$ to the set of $\CO_S$-modules $(\CF_\Lambda)_{\Lambda\in \CL}$, which satisfies LM1-3 and the condition 
    \begin{itemize}
        \item[LM4$\pm$.] for any $\Lambda\in\CL$, Zariski locally on $S$,  the line $\bigwedge_{\CO_{S}}^{n} \mathcal{F}_{\Lambda}$ is contained in
    \[
    \operatorname{Im}\left[ \left( \wedge_{\CO}^{n} \Lambda \right)_{\pm} \otimes_{\CO} \CO_{S} \longrightarrow \wedge_{\CO_{S}}^{n} \Lambda_{S} \right].
    \]
    \end{itemize}
\end{defn}

The functor $\RM^\pm_\CL$ is representable by a closed subscheme of $\RM^\naive_\CL$ over $\CO$, which we will also denote by $\RM^\pm_\CL$. Moreover, we have (cf. \cite[8.2.1]{pappas2009local}) \[\RM^\pm_{\CL,F}\simeq \OGr(n,V)_\pm. \]

\begin{remark}\label{isomconjugate}
	Let $\CL_1, \CL_2$ be two self-dual lattice chain of $V$. If $g\CL_1=\CL_2$ for some $g\in G^\circ(K)$, then clearly $g$ defines isomorphisms \begin{flalign*}
		\RM^\naive_{\CL_1}\simto \RM^\naive_{\CL_2} \text{\ and\ } \RM^\pm_{\CL_1}\simto \RM^\pm_{\CL_2}.
	\end{flalign*}
\end{remark}

Denote by \begin{flalign}
	\sG\coloneqq \sG_{\CL}  \label{Gpara}
\end{flalign}  
the (affine smooth) group scheme of similitude automorphisms of $\CL$.  By Bruhat--Tits theory, the neutral component $\sG^\circ$ is a parahoric group scheme for $G^\circ$.

\begin{lemma}\label{lemstable}
	The parahoric group scheme $\sG^\circ\sset \sG$ preserves the subspace $\RM^\pm_\CL\sset \RM^\naive_\CL$. 
\end{lemma}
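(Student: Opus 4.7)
The plan is to check that the defining condition LM4$\pm$ of $\RM^\pm_\CL\sset \RM^\naive_\CL$ is preserved under the natural $\sG^\circ$-action inherited from the $\sG$-action on $\RM^\naive_\CL$.

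First, I would unwind the action. For $g\in \sG(R)$, the action on an $R$-point $(\CF_\Lambda)_{\Lambda\in\CL}$ of $\RM^\naive_\CL$ sends it to $(g\CF_{g\inverse\Lambda})_{\Lambda\in\CL}$, where $g\CF_{g\inverse\Lambda}\sset \Lambda_R$ is the image of $\CF_{g\inverse\Lambda}$ under the similitude isomorphism $g\colon (g\inverse\Lambda)_R\simto \Lambda_R$. The conditions LM1--LM3 are automatically preserved. For LM4$\pm$, writing $\Lambda'=g\inverse\Lambda$, the problem reduces to showing that the induced isomorphism $\wedge^n g\colon \wedge^n_R\Lambda'_R\simto \wedge^n_R\Lambda_R$ sends $\mathrm{Im}\bigl[(\wedge^n_\CO \Lambda')_\pm\otimes_\CO R\bigr]$ into $\mathrm{Im}\bigl[(\wedge^n_\CO \Lambda)_\pm\otimes_\CO R\bigr]$.

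The essential input is the $G^\circ(F)$-stability of the decomposition $\wedge^n_F V=W_+\oplus W_-$ constructed in \cite[\S 7]{pappas2009local}: the subspaces $W_\pm$ are the Plücker spans of the two connected components $\OGr(n,V)_\pm$, and these components are preserved by the connected group $G^\circ$. Equivalently, the involution $a$ commutes with the action of $G^\circ(F)$. The element $\tau\in G(F)\setminus G^\circ(F)$ exchanges the two components, hence swaps $W_+$ and $W_-$, which is precisely why only $\sG^\circ$, and not $\sG$, can be expected to preserve $\RM^\pm_\CL$. Accepting this stability, the generic-fiber identity $g(W_\pm)=W_\pm$ for $g\in \sG^\circ(F)\sset G^\circ(F)$ immediately yields the desired containment after rationally identifying $\wedge^n_F(\Lambda'\otimes F)=\wedge^n_F V=\wedge^n_F(\Lambda\otimes F)$.

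To descend from the generic fiber to the integral model, I would use that $\sG^\circ$ is smooth, hence flat, over $\CO$, so its generic fiber is schematically dense. Fixing a large common ambient free $\CO$-module containing $(\wedge^n_\CO \Lambda')_\pm$ and $(\wedge^n_\CO \Lambda)_\pm$, the locus in $\sG^\circ$ where $\wedge^n g$ carries the former into the latter is closed; since it contains the generic fiber, it equals all of $\sG^\circ$. Base-changing to any $\CO$-algebra $R$ gives the required inclusion of images, verifying LM4$\pm$ and hence the lemma. The principal obstacle is the $G^\circ$-stability of $W_\pm$, which is the key algebro-geometric input from Pappas--Rapoport; once this is accepted, the remainder is routine bookkeeping with the similitude action and a standard ``closed condition plus flatness'' argument.
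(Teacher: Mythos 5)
Your proposal is correct and takes essentially the same route as the paper: both arguments hinge on the $G^\circ$-stability of the decomposition $\wedge^n_F V = W_+\oplus W_-$ from \cite[\S 7.1.3]{pappas2009local}, and both deduce stability of $(\wedge^n_\CO\Lambda)_\pm$ under $\sG^\circ$ from the fact that the generic fiber of $\sG^\circ$ is $G^\circ$. The paper states the descent from $G^\circ$-stability over $F$ to $\sG^\circ$-stability over $\CO$ in one sentence; you have usefully unfolded the implicit step into a flatness-plus-closed-condition argument, which is exactly the right justification. One small simplification: since $g\in\sG(R)$ stabilizes each lattice $\Lambda\otimes R$ by definition, the auxiliary $\Lambda'=g^{-1}\Lambda$ you introduce is just $\Lambda$ again, so the reduction is to showing $\wedge^n g$ preserves the image of $(\wedge^n_\CO\Lambda)_\pm\otimes_\CO R$ in $\wedge^n_R\Lambda_R$, with no lattice-relabelling needed.
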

\begin{proof}
	By \cite[\S 7.1.3]{pappas2009local}, the decomposition \eqref{Wdecomp} is preserved by $G^\circ$. Since the generic fiber of $\sG^\circ$ is $G^\circ$, the space $(\wedge^n_\CO\Lambda)_\pm$ in LM4$\pm$ for $\Lambda\in\CL$ is stable under the action of $\sG^\circ$. In particular, condition LM4$\pm$ is $\sG^\circ$-stable, and $\sG^\circ$ preserves the subspace $\RM^\pm_\CL$.
\end{proof}

\begin{defn}\label{Mlocdefin}
    Denote by $\RM^{\pm\loc}_\CL$ the schematic closure of $\RM^\pm_{\CL,F}$ in $\RM^\pm_\CL$.
\end{defn}
Recall that given $\sG^\circ$ and the minuscule cocharacter $\mu_\pm$ in \eqref{cochar}, the associated \dfn{schematic local model} $\RM^\loc_{\sG^\circ,\mu_\pm}$ represents the corresponding v-sheaf local model in the sense of Scholze--Weinstein \cite[\S 21.4]{scholze2020berkeley}. 
 
\begin{prop}\label{propiso}
	The scheme $\RM^{\pm\loc}_\CL$ is isomorphic to $\RM^\loc_{\sG^\circ,\mu_\pm}$. In particular, $\RM^{\pm\loc}_\CL$ is $\CO$-flat of relative dimension $n(n-1)/2$, normal, and Cohen-Macaulay with reduced special fiber. 
\end{prop}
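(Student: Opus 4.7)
The plan is to compare $\RM^{\pm\loc}_\CL$ with the Scholze--Weinstein v-sheaf local model $\RM^\loc_{\sG^\circ,\mu_\pm}$ by constructing a $\sG^\circ$-equivariant closed immersion $\RM^\loc_{\sG^\circ,\mu_\pm}\hookrightarrow \RM^{\pm\loc}_\CL$ and then deducing that it must be an isomorphism via a flatness-plus-generic-fiber argument.

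First, I would invoke the representability of $\RM^\loc_{\sG^\circ,\mu_\pm}$ and its standard properties. Since $G$ is split, $\sG^\circ$ is parahoric, and $\mu_\pm$ is minuscule, the v-sheaf local model is representable by a flat, projective, normal, Cohen--Macaulay $\CO$-scheme with reduced special fiber (this follows from Pappas--Zhu in the tame split setting, or the representability results of Ansch\"utz--Gleason--Louren\c{c}o--Richarz more generally). Its generic fiber is the minuscule partial flag variety $G^\circ/P_{\mu_\pm}\simeq \OGr(n,V)_\pm$, of dimension $\langle \mu_\pm,2\rho\rangle = n(n-1)/2$.

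Next, I would construct a canonical $\sG^\circ$-equivariant morphism $\RM^\loc_{\sG^\circ,\mu_\pm}\to\RM^\pm_\CL$ inducing the standard identification on generic fibers. Via the moduli interpretation of $\RM^\loc_{\sG^\circ,\mu_\pm}$ inside a Beilinson--Drinfeld affine Grassmannian for $\sG^\circ$, one obtains a natural map into the naive local model $\RM^\naive_\CL$ encoding conditions LM1--LM3 of Definition \ref{defn-naive}. The spin condition LM4$\pm$ is closed and $\sG^\circ$-stable (cf.\ Lemma \ref{lemstable}) and is satisfied on the generic fiber; hence by $\CO$-flatness of $\RM^\loc_{\sG^\circ,\mu_\pm}$ this morphism factors through the closed subscheme $\RM^\pm_\CL\sset\RM^\naive_\CL$. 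Since the source is projective and the target separated, the morphism is proper, and as it restricts to an isomorphism onto $\RM^{\pm\loc}_{\CL,F}$ on generic fibers, it is a closed immersion with image contained in the schematic closure $\RM^{\pm\loc}_\CL$.

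Finally, the resulting closed immersion $\RM^\loc_{\sG^\circ,\mu_\pm}\hookrightarrow \RM^{\pm\loc}_\CL$ is an isomorphism on generic fibers, and both sides are $\CO$-flat. A closed immersion of $\CO$-flat schemes that is an isomorphism on generic fibers must be an isomorphism: its defining ideal is $\CO$-torsion-free (as a subsheaf of a flat sheaf) and vanishes generically, hence vanishes. This yields the isomorphism $\RM^{\pm\loc}_\CL\simeq\RM^\loc_{\sG^\circ,\mu_\pm}$, and the asserted properties (flatness of relative dimension $n(n-1)/2$, normality, Cohen--Macaulayness, reducedness of the special fiber) transfer. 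The principal obstacle is the construction in the middle step: identifying the canonical morphism from the v-sheaf local model into the moduli-theoretic spin local model requires a concrete comparison between the Scholze--Weinstein construction and the classical lattice-chain description of $\RM^\naive_\CL$, which in the split setting is furnished by the comparison with Pappas--Zhu's local model.
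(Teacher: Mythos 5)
Your overall strategy diverges from the paper's, which is essentially two citations: Pappas--Zhu \cite[\S 8.2.5]{pappas2013local} is invoked to identify $\RM^{\pm\loc}_\CL$ directly with the Pappas--Zhu local model attached to $(\sG^\circ,\mu_\pm)$ (this is exactly the comparison you call the ``principal obstacle''), properties are read off from \cite{pappas2013local,haines2022normality}, and then \cite[Theorem 2.15]{he2020good} identifies the Pappas--Zhu model with the schematic (Scholze--Weinstein) local model. You instead try to build the bridge yourself by constructing a map from the v-sheaf local model into $\RM^\pm_\CL$ and arguing it is an isomorphism onto the flat closure, which is a genuinely different organization of the argument.

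There is, however, a real gap in your middle step. You assert that the morphism $\RM^\loc_{\sG^\circ,\mu_\pm}\to\RM^\pm_\CL$, being proper (projective source, separated target) and an isomorphism on generic fibers, ``is a closed immersion with image contained in the schematic closure.'' Proper plus generically an isomorphism does \emph{not} imply closed immersion: a normalization, or a blow-up supported in the special fiber, is proper, birational, and even flat over $\CO$, yet not a monomorphism. From the stated hypotheses you can only conclude that the image is closed, contains the dense generic fiber of $\RM^{\pm\loc}_\CL$, and hence that the induced map $\RM^\loc_{\sG^\circ,\mu_\pm}\to\RM^{\pm\loc}_\CL$ is proper and surjective; to upgrade this to an isomorphism you would need an additional input --- e.g.\ that the map is a monomorphism, or that $\RM^{\pm\loc}_\CL$ is already known to be normal (which is part of what is being proved), or that the map is a closed immersion into the affine Grassmannian/flag variety on both sides compatibly. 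This is precisely the content carried by the Pappas--Zhu comparison that the paper cites directly, so invoking it only at the end to construct the map and then re-deriving the isomorphism by the torsion-free-ideal argument does not save work; and without it, the closed-immersion claim does not hold.

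A secondary point: the torsion-free argument you use to factor the map through $\RM^\pm_\CL$ (a map from a flat scheme landing in a closed subscheme because it does so generically) is correct and standard, but note it applies to the factoring, not to the final isomorphism; you still must supply closed immersion, not just surjectivity.
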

\begin{proof}
	By results in \cite[\S 8.2.5]{pappas2013local}, $\RM^{\pm\loc}_\CL$ is isomorphic to the Pappas--Zhu local model attached to the parahoric group scheme $\sG^\circ$ and the cocharacter $\mu_\pm$. In particular, the scheme $\RM^{\pm\loc}_\CL$ is $\CO$-flat, normal, and Cohen-Macaulay with reduced special fiber by \cite{pappas2013local,haines2022normality}. On the other hand, it is well-known that the Pappas--Zhu model is isomorphic to the corresponding schematic local model (see, e.g., \cite[Theorem 2.15]{he2020good}). Hence, we obtain that $\RM^{\pm\loc}_\CL\simeq \RM^\loc_{\sG^\circ,\mu_\pm}$.
\end{proof}

\begin{conj}[{\cite[Conjecture 8.1]{pappas2009local}}] \label{introconj-pr}
    The spin local model $\RM^\pm_\CL$ is flat over $\CO$. Equivalently, $\RM^\pm_\CL=\RM^{\pm\loc}_\CL$.
\end{conj}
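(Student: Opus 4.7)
The plan is to leverage the topological flatness result of Theorem \ref{intro-thmtopo}, reducing the conjecture to the statement that the special fiber $\RM^\pm_{\CL,k}$ is reduced, as indicated in the remark following that theorem. Recall from Definition \ref{Mlocdefin} and Proposition \ref{propiso} that $\RM^{\pm\loc}_\CL \hookrightarrow \RM^\pm_\CL$ is a closed immersion whose source is $\CO$-flat with reduced special fiber and whose generic fiber agrees with that of $\RM^\pm_\CL$. Topological flatness tells us that the two schemes share the same underlying topological space. Thus if I can prove that $\RM^\pm_{\CL,k}$ is reduced, then $\RM^{\pm\loc}_{\CL,k} \simto \RM^\pm_{\CL,k}$ as schemes, which upgrades to $\RM^{\pm\loc}_\CL \simto \RM^\pm_\CL$ and gives the conjecture.

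First I would reduce the reducedness question to the pseudo-maximal case $\CL = \Lambda_{\{i\}}$, in parallel with the topological flatness argument of \S \ref{subsec-genepara}. The forgetful morphisms $\RM^\pm_I \to \RM^\pm_i$, $i\in I$, are closed and equivariant for the parahoric group scheme $\sG^\circ_{\Lambda_I}$, and their special fibers embed into the affine flag variety $\sFl_I$, which is ind-reduced with Schubert variety image. To promote the set-theoretic identity $\Perm^\pm_I = \bigcap_{i} \rho_i^{-1}(\Perm^\pm_i)$ to a scheme-theoretic one, the key tool is the coherence conjecture of Pappas--Rapoport (now a theorem), which identifies the special fiber of the Pappas--Zhu model $\RM^{\pm\loc}_I$ with an explicit union of Schubert varieties; combined with $\sG^\circ$-equivariance and the fact that Schubert varieties are normal and reduced, this should reduce reducedness of $\RM^\pm_{I,k}$ to that of each $\RM^\pm_{i,k}$.

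Second, for the pseudo-maximal case $I = \{i\}$, I would verify reducedness by hand, using the explicit Schubert-cell representatives from the proof of Theorem \ref{intro-thmpseumax} together with the rank stratification $\RM^\pm_i(\ell)$. On suitable affine charts around each stratum, $\RM^\pm_i$ is cut out of an affine matrix space by the LM1--LM3 rank and duality conditions plus the spin condition LM4$\pm$. On the open stratum $\RM^\pm_i(i)$, reducedness is automatic: by Theorem \ref{intro-thmpseumax}(2), it coincides with a smooth open of the flat scheme $\RM^{\pm\loc}_i$. For the deeper strata $\ell < i$, the spin condition translates, via the decomposition $\wedge^n V = W_+ \oplus W_-$, into Pfaffian-type relations on the matrix coordinates; the aim is to show that these Pfaffian relations together with the naive relations generate a radical ideal modulo $\pi$.

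The main obstacle will be the reducedness check at the deepest strata $\ell = \max\{0, 2i-n\}$, where the local geometry is most degenerate and LM4$\pm$ becomes most intricate. In the analogous ramified unitary setting (\cite{smithling2015moduli,yu2019moduli,luo}) the corresponding step required delicate Pl\"ucker-type computations and, in several cases, a Frobenius-splitting argument on the ambient Schubert varieties. The additional complication in our orthogonal setting is that $\RM^\naive_i = \RM^+_i \cup \RM^-_i$ splits into two components glued along an irreducible intersection, so the Pfaffian relations must be analyzed separately on each component and shown to be compatible along the gluing locus. Successfully carrying out this analysis would establish Conjecture \ref{introconj-pr}.
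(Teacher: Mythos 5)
The statement you are addressing is not proved in this paper: it appears only as Conjecture~\ref{introconj-pr}, and the paper proves only a weaker, preliminary form of it, namely topological flatness (Theorem~\ref{intro-thmtopo}). The remark following Theorem~\ref{intro-thmtopo} in the introduction explicitly states that, by topological flatness, the conjecture reduces to reducedness of the special fiber $\RM^\pm_{\CL,k}$, and that this \emph{will be investigated in a forthcoming paper}. So there is no proof in the paper for you to match; your opening reduction (reducedness of the special fiber implies the conjecture) is precisely the observation the paper records as an open problem, and everything after it is original speculation rather than a reconstruction.

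Within your own sketch there is also a genuine gap in the reduction to the pseudo-maximal case. The paper's argument in \S\ref{subsec-genepara} is entirely set-theoretic: it identifies the underlying topological space of $\RM^\pm_{I,k}$ with a union of Schubert cells indexed by $W_I\backslash\Adm(\mu_\pm)/W_I$, via Lemma~\ref{lem-intersection} and the vertexwise criterion. Lemma~\ref{lem-intersection} does say that $\RM^\pm_I = \bigcap_{i\in I} q_i^{-1}(\RM^\pm_i)$ as schemes, but a scheme-theoretic intersection of flat (or reduced) closed subschemes is in general neither flat nor reduced, so knowing that each $\RM^\pm_{i,k}$ is reduced would not by itself give reducedness of $\RM^\pm_{I,k}$. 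Invoking the coherence theorem identifies $\RM^{\pm\loc}_{I,k}$ with an explicit union of Schubert varieties, but that tells you about the local model closure, not about $\RM^\pm_I$ itself; the whole point of the conjecture is to show these two agree scheme-theoretically, so you cannot use the identification as an input without circularity. The hard content — both the scheme-theoretic reduction to $I=\{i\}$ and the Pfaffian/chart computations in the deepest strata of the pseudo-maximal case — is exactly what is missing, and it is missing from the paper as well.
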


\begin{remark} \label{rmk-toI}
	By Proposition \ref{prop-Jform}, for any self-dual lattice chain $\CL$, there exists an element $g\in G^\circ(K)$ and a non-empty $I\sset[0,n]$ such that $\CL=g\Lambda_I$. By Remark \ref{isomconjugate}, we have isomorphisms \begin{flalign*}
	\RM^\naive_\CL\simeq \RM^\naive_{\Lambda_I} \text{\ and\ } \RM^\pm_\CL\simto \RM^\pm_{\Lambda_I}.
\end{flalign*}  Hence, to prove Theorem \ref{intro-thmtopo}, it suffices to treat the cases $\CL=\Lambda_I$. In addition, by Remark \ref{in-i}, we have  $\RM^\naive_{\Lambda_{\cbra{i}}}\simeq\RM^\naive_{\Lambda_{\cbra{n-i}}}$ and $\RM^\pm_{\Lambda_{\cbra{i}}}\simeq\RM^\pm_{\Lambda_{\cbra{n-i}}}$ for $0\leq i\leq n$. 
\end{remark}

In the rest of the paper, we let $\RM^\naive_{I,k}$ (resp. $\RM^\pm_{I,k}$) denote the special fiber of $\RM^\naive_{\Lambda_I}$ (resp. $\RM^\pm_{\Lambda_I}$). Sometimes, we simply write $\RM^\naive_k$ or $\RM^\pm_k$ when the index set is clear. 

\subsection{Embedding $\RM^\naive_I\otimes k$ in the affine flag variety}
From now on, we take the field $K$ in \S \ref{sec-para} to be the field $k((t))$ of Laurent series over $k$. For ease of notation, we use the same symbol $G$ for the split orthogonal similitude group over $F$ or $K$; the base field will be clear from context.

Let $I$ be a non-empty subset of $[0,n]$.
For the associated self-dual lattice chain $\lambda_I$ (see \eqref{eq-lambdaI}) of $K^{2n}$, we let $\CP_I$ be the $\CO_K$-group scheme of similitude automorphisms of $\lambda_I$. 
As $p\neq 2$, the group scheme $\CP_I$ is affine and smooth over $\CO_K$ by \cite[Theorem 3.16]{rapoport1996period}, and the generic fiber of $\CP_I$ is $G$. The neutral component $\CP^{\circ}_I$ of $\CP_I$ is the parahoric group scheme attached to $\lambda_I$. Denote by $\CQ_I$ the schematic closure of $G^{\circ}$ in $\CP_I$. Set $Q_I\coloneqq \CQ_I(\CO_K)$. We have \begin{flalign*}
	P_I=\CP_I(\CO_K), Q_I=P_I\cap G^\circ(K), \text{\ and\ } P^0_I=\CP^{\circ}_I(\CO_K).
\end{flalign*} 

\begin{defn}[{\cite[\S 6]{smithling2011topological}}]  \label{defnFlag}
	The affine flag variety $\sFl_I$ is the fpqc quotient sheaf\footnote{Although $\CP_I$ need not equal $\CP_I^\circ$ in general, we still refer to $\sFl_I$ as an affine flag variety. } $$\sFl_I\coloneqq LG/L^+\CP_I.$$ Taking $\lambda_I$ as the base point, one identifies $\sFl_I$ with the fpqc sheaf sending a $k$-algebra $R$ to the set of $R[[t]]$-lattice chains $(L_{\ell})_{\ell\in 2n\BZ\pm I}$ in $R((t))^{2n}$ with the property that \begin{enumerate}
	\item for $\ell<\ell'$ in $2n\BZ\pm I$, we have $L_\ell\sset L_{\ell'}$, and the quotient $L_{\ell'}/L_\ell$ is locally free of $R$-rank $\dim_k\lambda_{\ell'}/\lambda_\ell$;
	\item if $\ell'=\ell+2nd$ for some $d\in\BZ$, then $L_{\ell'}=t^{-d}L_\ell$;
	\item Zariski-locally on $\Spec R$, there exists a scalar $\alpha\in R((t))\cross$ such that ${L}_\ell^{\psi}=\alpha L_{-\ell}$ for all $\ell\in 2n\BZ\pm I$. Here ${L}_\ell^{\psi}\coloneqq \cbra{x\in R((t))^{2n}\ |\ \psi(L_\ell,x)\sset R[[t]] }. $
\end{enumerate} 
\end{defn}

\begin{lemma} \label{lem-Itype}
	Let $I\sset [0,n]$ be a non-empty subset.
	\begin{enumerate}
		\item If $\cbra{0,n}\sset I$, then $P_I=Q_I=P_I^\circ$.
		\item If $0\in I$ and $n\notin I$, then $P_I/Q_I\simeq\BZ/2\BZ$ is generated by $\tau$; and $Q_I=P_I^\circ$.
		\item If $0\notin I$ and $n\in I$, then $P_I/Q_I\simeq\BZ/2\BZ$ is generated by $$\tau'\coloneqq \diag(t\inverse,1,\ldots,1,t)(1,2n);$$ and $Q_I=P_I^\circ$.
		\item If $0,n\notin I$, then $P_I/Q_I\simeq \BZ/2\BZ$ is generated by $\tau$; and $Q_I/P_I^\circ\simeq\BZ/2\BZ$ is generated by $\tau_1=\diag(t\inverse,1,\ldots,1,t)(1,2n)(n,n+1)$.
	\end{enumerate}
\end{lemma}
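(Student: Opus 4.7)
The plan is to combine direct matrix calculations with the Bruhat--Tits framework of Proposition \ref{prop-paraIndex}. First I would check by inspection the following lattice identities: for $i \in [0,n]$,
\begin{flalign*}
    \tau\lambda_i=\lambda_i \iff i\neq n, \qquad \tau'\lambda_i=\lambda_i \iff i\neq 0, \qquad \tau_1\lambda_i=\lambda_i \iff 0<i<n.
\end{flalign*}
These follow from tracking how each element acts on the standard basis: $\tau=(n,n+1)$ swaps $e_n\leftrightarrow e_{n+1}$, which lie in $\lambda_i$ without a $t^{-1}$ factor precisely when $i<n$; $\tau'=t_1(1,2n)$ sends $e_1\mapsto te_{2n}$ and $e_{2n}\mapsto t^{-1}e_1$, requiring $i\geq 1$ to absorb the $t^{\pm 1}$; and $\tau_1=t_1(1,2n)(n,n+1)$ combines both failures at the two ends. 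Combined with the component data---$\tau,\tau'\in\tau G^\circ$ because the permutation parts $(n,n+1)$ and $(1,2n)$ are odd in $S_{2n}^*$, while $\tau_1\in G^\circ$ and $\kappa(\tau_1)=\kappa(t_1)=(0,1)\neq 0$ in $\pi_1(G^\circ)=\BZ\oplus\BZ/2\BZ$---this exhibits the asserted generators of $P_I/Q_I$ and $Q_I/P_I^\circ$ in each of the four cases.

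To see that these exhaust the non-trivial contributions, I would invoke the structure developed in the proof of Proposition \ref{prop-paraIndex}. Let $\ff_I \subset \fa$ denote the facet with vertex set $\{a_i\}_{i\in I}$, and let $\Omega^{\ff_I}$ be the pointwise stabilizer of $\{a_i\}_{i\in I}$ inside $\Omega = \Omega^\circ \sqcup \tau\Omega^\circ$. One has a natural identification $P_I/P_I^\circ \simeq \Omega^{\ff_I}$, which decomposes as $P_I/Q_I = \Omega^{\ff_I}/(\Omega^\circ)^{\ff_I}$ and $Q_I/P_I^\circ = (\Omega^\circ)^{\ff_I}$. Since central translations in $\Omega^\circ$ shift $\CA$ by non-zero multiples of $(1,\ldots,1)$ and so fail to pointwise fix any $a_i$, the group $(\Omega^\circ)^{\ff_I}$ injects into the $\Xi$-stabilizer $\Xi^{\ff_I}$. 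Reading off the $\Xi$-action on $\Delta$ tabulated in the proof of Proposition \ref{prop-paraIndex}, a short case check gives $\Xi^{\ff_I} = 1$ whenever $0 \in I$ or $n \in I$ (every non-trivial $\xi \in \Xi$ moves $a_0$ or $a_n$), handling cases (1), (2), (3); whereas for $I \subset [1,n-1]$, the element $\tau_1$---or its image $\tau_2^2$ modulo center in the odd case---pointwise fixes every $a_i$, while all other non-trivial elements of $\Xi$ displace some $a_i$, yielding $\Xi^{\ff_I} = \langle\tau_1\rangle \simeq \BZ/2\BZ$ as asserted in case (4).

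Finally, in case (1) I would rule out hidden elements of $\tau G^\circ \cap P_I$ by reduction modulo $t$: any $g \in \tau G^\circ \cap P_{\{0\}}$ descends to an element of $\mathrm{GO}(\lambda_0/t\lambda_0) \setminus \mathrm{GSO}$, which interchanges the two families of maximal isotropic subspaces of the induced quadratic space; but $g \in P_{\{n\}}$ forces $g$ to preserve the class of $\lambda_n/\lambda_0 \subset t^{-1}\lambda_0/\lambda_0$, a class distinct from that of $\lambda_{n'}/\lambda_0$, yielding a contradiction. In cases (2), (3), (4) the elements $\tau$ or $\tau'$ directly realize the $\BZ/2\BZ$ quotient $P_I/Q_I$. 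The principal obstacle is this reduction-mod-$t$ argument---requiring the Pl\"ucker-type dichotomy for maximal isotropic subspaces in an even-dimensional split quadratic space---together with the verification that $\tau_1$ pointwise fixes the midpoint vertices $a_1, a_{n-1}$ in case (4), for which one uses the averaging identity $\tau_1 \cdot (a_0 + a_{0'})/2 = a_1$ (and symmetrically at the other end).
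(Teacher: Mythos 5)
Your proof is correct in substance, but it reaches the same conclusions by two genuinely different devices that are worth contrasting with the paper's. For part (1), the paper rules out $\tau G^\circ\cap P_I$ by the affine Bruhat decomposition: writing a putative $g=\tau g'$ as $b_1wb_2$ and observing that $w\Lambda_0=\Lambda_0,\ w\Lambda_n=\Lambda_{n'}$ forces the finite Weyl part to be an odd permutation, contradicting $w\in\wt W^\circ$. Your reduction-mod-$t$ argument (via the dichotomy of maximal isotropic subspaces of the split quadratic space $\lambda_0/t\lambda_0$, with $\lambda_n/\lambda_0$ and $\lambda_{n'}/\lambda_0$ landing in opposite families) is a valid and more geometric alternative; it does, however, tacitly invoke that the component scheme of the BT stabilizer $\CP_{\{0\}}$ is constant $\BZ/2\BZ$ over $\CO_K$, so that an element of $\tau G^\circ(K)\cap P_{\{0\}}$ reduces to $\mathrm{GO}\setminus\mathrm{GSO}$; this is true (smoothness plus both fibers having two rational components) but should be stated. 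For the computation of $Q_I/P_I^\circ$, the paper goes directly through the Kottwitz map: in cases (1)--(3) it simply uses that $\{0\}$ and $\{n\}$ are hyperspecial, so $Q_0,Q_n\subset\ker\kappa$; in case (4) it notes $\det g,c(g)\in\CO_K^\times$ to pin the image inside $\{0\}\times\BZ/2\BZ$, with $\tau_1$ supplying surjectivity. Your route via the pointwise stabilizer $(\Omega^\circ)^{\ff_I}\hookrightarrow\Xi^{\ff_I}$ and the explicit $\Xi$-action on $\Delta$ recovers the same answer in all four cases and ties the result directly to the diagram automorphism picture of Proposition \ref{prop-paraIndex}, which is arguably more uniform; the price is that the identification $Q_I/P_I^\circ\simeq(\Omega^\circ)^{\ff_I}$ is itself a (standard but nontrivial) Bruhat--Tits fact, and the reduction to $\Xi^{\ff_I}$ relies on the observation that the kernel of $\Omega^\circ\to\Xi$ consists exactly of central translations by multiples of $(1,\ldots,1)$, which do not fix the specific representatives $a_i\in\CA$. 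Your explicit lattice computations verifying which of $\tau,\tau',\tau_1$ preserve $\lambda_i$, and the verification that $\tau_1$ fixes the midpoints $a_1,a_{n-1}$ via the affine action, are correct and match the generators the paper identifies.
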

\begin{proof}
Note that $\cbra{0}$ or $\cbra{n}$ corresponds to a hyperspecial subgroup. Hence, we have $Q_0=P_0^\circ\sset\ker \kappa$ and $Q_n=P_n^\circ\sset \ker\kappa$; cf. \cite[Proposition 8.4.14]{kaletha2023bruhat} and \cite[Proposition 3]{haines2008parahoric}.

	(1) Suppose that $g\in P_I-Q_I$. By \eqref{Gtwocompo}, there exists a $g'\in G^\circ(K)$ such that $g=\tau g'$. Since $\cbra{0,n}\sset I$, we have $g\Lambda_0=\tau g'\Lambda_0=\Lambda_0$ and $g\Lambda_n=\tau g'\Lambda_n=\Lambda_n$. Thus, \begin{flalign}
		   g'\Lambda_0=\tau\Lambda_0=\Lambda_0 \text{\ and\ } g'\Lambda_n=\tau\Lambda_n=\Lambda_{n'}.  \label{eq34}
	\end{flalign}
	Denote by $B$ the Iwahori subgroup (poitwisely) stabilizing the alcove $\fa$. 
	By the affine Bruhat decomposition (cf. \cite[Proposition 5.1.1]{kaletha2023bruhat}) for $G^\circ$, we can write $$g'=b_1wb_2,$$ where $b_1,b_2\in B$ and $w\in N_{G^\circ}(K)$.  We have $b_i\Lambda_0=\Lambda_0$ and $b_i\Lambda_{n'}=\Lambda_{n'}$ for $i=1,2$. Hence, \eqref{eq34} amounts to \begin{flalign*}
		   w\Lambda_0=\Lambda_0 \text{\ and\ } w\Lambda_n=\Lambda_{n'}.
	\end{flalign*}
	Note that we can write $w=t\sigma$, where $t\in T(K)$ and $\sigma\in S_{2n}^\circ$. The equality $w\Lambda_0=\Lambda_0$ implies that $t\in T(\CO_K)$. Then the equality $w\Lambda_n=\Lambda_{n'}$ implies that $\sigma\Lambda_n=\Lambda_{n'}$. However, this is impossible, as $\sigma\in S_{2n}^\circ$ is an even permutation. 
	Hence, we obtain that $P_I=Q_I$. Since $0\in I$, we have $Q_I\sset Q_0\sset \ker\kappa$. So $P_I=Q_I=P_I^\circ$.
	
	(2) It is clear that $\tau\in P_I-Q_I$ by assumption. Since $$P_I/Q_I\hookrightarrow G(K)/G^\circ(K)=\BZ/2\BZ,$$ we have $P_I/Q_I\simeq \BZ/2\BZ$. In addition, $Q_I=P_I^\circ$ as $Q_I\sset Q_0\sset \ker\kappa$. 
	
	(3) The proof is similar as that of (2).
	
	(4) The first part follows as in the proof of (2). Let $g\in Q_I$. Then $g\Lambda_i=\Lambda_i$ for any $i\in I$; and hence $\det g\in \CO\cross_K$ and $c(g)\in \CO_K\cross$. The Kottwitz homomorphism \eqref{eq-kott} induces a map \begin{flalign*}
		   Q_I/P_I^\circ\hookrightarrow \cbra{0}\times\BZ/2\BZ\sset \BZ\times \BZ/2\BZ.
	\end{flalign*}
	We thus obtain that $Q_I/P_I^\circ\simeq\BZ/2\BZ$.
\end{proof}

\begin{defn}
	We say that a subset $I\sset [0,n]$ is of {type \RNum{1}} if $\cbra{0,n}\sset I$; of type \RNum{2} if exactly one of $0$ or $n$ lies in $I$; and of type \RNum{3} if $0,n\notin I$. 
\end{defn}

By Lemma \ref{lem-Itype}, we have  \begin{flalign} \label{sFI}
    \sFl_I= LG/L^+\CP_I \simeq \begin{cases}
    	(LG^{\circ}/L^+\CP_I^\circ) \sqcup \tau(LG^\circ/L^+\CP_I^\circ) \ &\text{if $I$ is of type \RNum{1}};\\ 
    	LG^\circ/L^+\CQ_I\simeq LG^\circ/L^+\CP_I^\circ &\text{if $I$ is of type \RNum{2}}; \\
    	LG^\circ/L^+\CQ_I &\text{if $I$ is of type \RNum{3}}.
    \end{cases}  
\end{flalign}
Set $\sFl'_I\coloneqq LG^{\circ}/L^+\CP^{\circ}_I$ and $H\coloneqq \ker(LG^{\circ}\xrightarrow{\kappa} \BZ\oplus\BZ/2\BZ\twoheadrightarrow \BZ/2\BZ)$. Here, we use the fact that the Kottwitz homomorphism $\kappa\colon G^{\circ}(K)\ra \BZ\oplus\BZ/2\BZ$ extends to a homomorphism $LG^{\circ}\ra \BZ\oplus\BZ/2\BZ$ classifying the connected components of $LG^\circ$, cf. \cite[Theorem 0.1]{PR08}.  If $I$ is of type \RNum{3}, then (cf. \cite[6.2.3]{smithling2014topological}) 
    \begin{flalign}
    	\sFl_I'= (H/L^+\CP_I^\circ)\sqcup (\tau H/L^+\CP_I^\circ) \simeq \sFl_I\sqcup \sFl_I   \label{HsFI}
    \end{flalign}
is isomorphic to two copies of $\sFl_I$. 

\begin{defn}
    For $w\in\wt{W}$, the Schubert cell $C_w$ is the reduced $k$-subscheme  \[C_w\coloneqq L^+\CP^{\circ}_Iw \sset\sFl_I, \]
    and the Schubert variety $S_w$ is the reduced closure of $C_w$ in $\sFl_I$. 
\end{defn}

\begin{prop} \label{lem-cell}
	There is a bijection between the set of Schubert cells in $\sFl_I$ and the double coset \begin{flalign*}
		 \begin{cases}
		 	W_I\backslash\wt{W}/W_I \quad &\text{if $I$ is of type \RNum{1}},\\ W_I\backslash\wt{W}^\circ/W_I &\text{if $I$ is of type \RNum{2}},\\ W_I\backslash W'/W_I &\text{if $I$ is of type \RNum{3}},
		 \end{cases}  
	\end{flalign*}
	where $W'\sset \wt{W}^\circ$ is defined in \eqref{Wprime}, and $W_I\coloneqq \cbra{w\in W_\aff\ |\ wa_i=a_i \text{\ for\ } i\in I}$.
\end{prop}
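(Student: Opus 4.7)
My plan is to reduce the assertion to the standard Bruhat decomposition for the connected group $G^\circ$ with respect to the parahoric $L^+\CP_I^\circ$, which gives that the $L^+\CP_I^\circ$-orbits on $\sFl_I' = LG^\circ/L^+\CP_I^\circ$ are in natural bijection with $W_I\backslash \wt{W}^\circ/W_I$ (this is classical: $W_I$ equals the image of $N_T(K)\cap L^+\CP_I^\circ$ in $\wt{W}^\circ$, which is the stabilizer in $W_\aff$ of the facet with vertices $\{a_i\}_{i\in I}$). I then treat the three types of $I$ separately, using the descriptions from \eqref{sFI}--\eqref{HsFI} together with Lemma \ref{lem-Itype}.

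In type \RNum{1}, i.e.\ $\{0,n\}\subseteq I$, Lemma \ref{lem-Itype} gives $\CP_I = \CP_I^\circ$, so $\sFl_I = LG/L^+\CP_I^\circ$. Using $LG = LG^\circ \sqcup \tau LG^\circ$, this splits as $\sFl_I'\sqcup \tau \sFl_I'$, and the $L^+\CP_I^\circ$-orbits on each piece are indexed respectively by $W_I\backslash \wt{W}^\circ/W_I$ and $W_I\backslash \tau\wt{W}^\circ/W_I$, which together form $W_I\backslash \wt{W}/W_I$. In type \RNum{2}, Lemma \ref{lem-Itype} says that $P_I/Q_I$ is generated by an element of $G\setminus G^\circ$ (namely $\tau$ or $\tau'$) that actually lies in $L^+\CP_I$, and $Q_I = P_I^\circ$; this forces the natural surjection $LG^\circ/L^+\CP_I^\circ \twoheadrightarrow LG/L^+\CP_I$ to be a bijection, so $\sFl_I\simeq \sFl_I'$ and the orbits are parametrized by $W_I\backslash \wt{W}^\circ/W_I$.

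In type \RNum{3}, Lemma \ref{lem-Itype} gives $P_I/Q_I\simeq\BZ/2\BZ$ generated by $\tau$ and $Q_I/P_I^\circ\simeq\BZ/2\BZ$ generated by $\tau_1$, so the type \RNum{2} argument yields $\sFl_I\simeq LG^\circ/L^+\CQ_I = \sFl_I'/\langle \tau_1\rangle$. A direct Kottwitz calculation for $\tau_1 = \diag(t^{-1},1,\ldots,1,t)\cdot (1,2n)(n,n+1)$ shows $\varepsilon(\tau_1)=1$, so left multiplication by $\tau_1$ swaps the two components of $\sFl_I'$ appearing in \eqref{HsFI}. Consequently $\sFl_I$ is canonically identified with the single component $H/L^+\CP_I^\circ$, and the $L^+\CP_I^\circ$-orbits on this component are precisely those $C_w$ with $w\in W'=\ker\varepsilon$; modding out by $W_I$ on both sides (noting $W_I\subseteq W_\aff\subseteq \ker\kappa\subseteq W'$) yields $W_I\backslash W'/W_I$.

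The main obstacle is the type \RNum{3} step: one must rigorously pin down the identification $\sFl_I'/\langle\tau_1\rangle \simeq H/L^+\CP_I^\circ$, which rests on verifying $\varepsilon(\tau_1)\neq 0$ (so that $\tau_1$ genuinely exchanges the two components rather than preserving them), and on checking $W_I\subseteq W'$ so that the double coset $W_I\backslash W'/W_I$ is well-defined. The other two types are essentially formal consequences of the standard orbit parametrization combined with \eqref{sFI} and Lemma \ref{lem-Itype}.
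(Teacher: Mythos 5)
Your proposal is correct and, modulo one slip, follows the same route as the paper: use the standard Bruhat decomposition for the connected group $G^\circ$ to parametrize $L^+\CP_I^\circ$-orbits on $\sFl_I'$ by $W_I\backslash\wt{W}^\circ/W_I$, and then treat the three types via \eqref{sFI}, \eqref{HsFI}, and Lemma~\ref{lem-Itype}. In type I, what you call ``the orbits on $\tau\sFl_I'$ are indexed by $W_I\backslash\tau\wt{W}^\circ/W_I$'' is exactly what the paper spells out by introducing $W_{I'}=\tau W_I\tau$ and applying the Bruhat decomposition with the two parahorics $P_{I'}^\circ, P_I^\circ$ — your phrasing glosses over this conjugated-parahoric step but it is correct and standard. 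In type III, you say \emph{left} multiplication by $\tau_1$ swaps the two components of $\sFl_I'$; the relevant action here is in fact \emph{right} multiplication by $\tau_1$, since the covering $\sFl_I'=LG^\circ/L^+\CP_I^\circ\twoheadrightarrow LG^\circ/L^+\CQ_I=\sFl_I$ is a right quotient by $L^+\CQ_I/L^+\CP_I^\circ\simeq\langle\tau_1\rangle$. This does not affect your conclusion, because $\varepsilon$ is a homomorphism and $\varepsilon(\tau_1)=1$, so both left and right multiplication by $\tau_1$ exchange the components of $LG^\circ$; but it is worth fixing for clarity. Your verification of $\varepsilon(\tau_1)=1$ and of the inclusion $W_I\subset W_\aff\subset W'$ (so that $W_I\backslash W'/W_I$ makes sense) closes the gaps you flagged as ``main obstacles.''
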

\begin{proof}
    Suppose that $I$ is of type \RNum{1}. Then $\sFl_I=LG/L^+\CP_I^\circ$. Set \begin{flalign*}
    	I'\coloneqq \begin{cases}
    		I \quad &\text{if $n\notin I$},\\ (I\backslash\cbra{n})\cup\cbra{n'} &\text{if $n\in I$}.
    	\end{cases}
    \end{flalign*}
    Define $$W_{I'}\coloneqq \cbra{w\in W_\aff\ |\ wa_i=a_i, i\in I'}.$$ Then $\tau W_I\tau =W_{I'}$.
	By the standard affine Bruhat decomposition for connected reductive groups (see  \cite[Proposition 4.8, pp. 184]{PRS13}), we have bijections \begin{flalign}
		    W_I\backslash\wt{W}^\circ/W_I\simto P_I^\circ\backslash G^\circ(K)/P_I^\circ \text{\ and\ } W_{I'}\backslash\wt{W}^\circ/W_I\simto P_{I'}^\circ\backslash G^\circ(K)/P_I^\circ.   \label{eqWIPI}
	\end{flalign}
	Since $\wt{W}=\wt{W}^\circ\sqcup \tau\wt{W}^\circ$, we obtain that \begin{flalign*}
		W_I\backslash \wt{W}/W_I &\simeq (W_I\backslash \wt{W}^\circ/W_I)\sqcup (\tau W_{I'}\backslash\wt{W}^\circ/W_I)\\ &\simeq (P_I^\circ\backslash G^\circ(K)/P_I^\circ)\sqcup (\tau P_{I'}^\circ\backslash G^\circ(K)/P_I^\circ)\\ &\simeq P_I^\circ\backslash G(K)/P_I^\circ.
	\end{flalign*}
	This proves the case where $I$ is of type \RNum{1}.
	
	When $I$ is of type \RNum{2}, the lemma follows from \eqref{sFI} and \eqref{eqWIPI}.
	
	Suppose that $I$ is of type \RNum{3}. Then $\sFl_I$ is isomorphic to $H/L^+\CP_I^\circ$ by \eqref{HsFI}. The lemma follows from \eqref{eqWIPI} and the definition of $W'$. 
\end{proof}

\begin{lem}\label{lem-Wmperm}
    Suppose that $I=\cbra{i}\sset [0,n]$. Denote $W_i\coloneqq W_I$.
    We have $$W_i\simeq S_{2i}^\circ\times S_{2(n-i)}^\circ\sset S_{2n}^\circ,$$ where the two factors act as even permutations on $A_i\coloneqq \sbra{1,i}\cup [i^*,2n]$ and $B_i\coloneqq \sbra{i+1,i^*-1}$ respectively.
\end{lem}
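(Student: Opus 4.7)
The plan is to exploit the semidirect decomposition $W_\aff \simeq Q^\vee \rtimes S_{2n}^\circ$ and translate the single equation $wa_i = a_i$ (for $w = t^w w_0 \in W_\aff$) into three conditions on the pair $(t^w, w_0)$: integrality of $t^w$, its membership in the coroot lattice $Q^\vee$, and membership of $w_0$ in $S_{2n}^\circ$.

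First I would observe that the vertex $a_i = ((-1/2)^{(i)}, 0^{(2n-2i)}, (1/2)^{(i)})$ from \eqref{vertices} takes the value $0$ on the middle block $B_i$ and the values $\pm 1/2$ on $A_i$. By the action formula \eqref{actionW}, $wa_i = a_i$ forces $t^w = a_i - w_0 a_i$, which in particular shows that the projection $W_i \to W^\circ$ is injective. Coordinate by coordinate, $(a_i - w_0 a_i)(j) = a_i(j) - a_i(w_0^{-1} j)$ is a half-integer precisely when exactly one of $j$ and $w_0^{-1} j$ lies in $A_i$; hence the integrality of $t^w$ forces $w_0$ to preserve both $A_i$ and $B_i$. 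Consequently $w_0 = w_A w_B$ with $w_A \in S_{2i}^*$ supported on $A_i$ and $w_B \in S_{2(n-i)}^*$ supported on $B_i$.

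Next I would analyze the two remaining constraints. The symmetry $t^w(j) + t^w(j^*) = 0$ is automatic, since $w_0 \in S_{2n}^*$ and $a_i(j) + a_i(j^*) = 0$. A short bookkeeping restricted to $[1, n]$ then gives
\[
t^w(1) + \cdots + t^w(n) = -s_A,
\]
where $s_A$ denotes the number of indices $k \in [1, i]$ whose image $w_A(k)$ lies in $[i^*, 2n]$ (the ``negations'' of $w_A$); one defines $s_B$ analogously. Thus $t^w \in Q^\vee$ is equivalent to $s_A$ being even. On the other hand, viewing $w_A$ and $w_B$ as permutations of $[1, 2n]$ extended by the identity, one has $\mathrm{sgn}(w_A) = (-1)^{s_A}$ and $\mathrm{sgn}(w_B) = (-1)^{s_B}$, so $w_0 \in S_{2n}^\circ$ amounts to $s_A + s_B$ being even. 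Combining these forces both $s_A$ and $s_B$ to be even, i.e.\ $w_A \in S_{2i}^\circ$ and $w_B \in S_{2(n-i)}^\circ$, yielding the asserted product decomposition.

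The only genuinely subtle step is the sign formula $\mathrm{sgn}(w_A) = (-1)^{s_A}$. Because the pairing on $[1, 2n]$ is the reversal $j \mapsto 2n+1-j$ rather than the familiar shift $j \mapsto j+n$, the ``underlying permutation'' $\tau_0$ of a signed permutation does not sit inside $S_{2n}$ with sign $\mathrm{sgn}(\tau_0)$, and the naive guess $\mathrm{sgn}(\tau_0)(-1)^{s_A}$ is incorrect. The clean way around this is to check on generators: a ``pure transposition'' $(k, k')(k^*, (k')^*)$ is a product of two disjoint transpositions and so has sign $+1$, whereas a pure negation $(k, k^*)$ is a single transposition of sign $-1$. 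Since $S_{2i}^*$ is generated by these two families and $\mathrm{sgn}$ is a character, this pins down the formula. Beyond this sign calculation the argument is routine, and the edge cases $i \in \{0, 1, n-1, n\}$ are handled automatically because one of $S_{2i}^\circ$ or $S_{2(n-i)}^\circ$ degenerates to the trivial or full factor as expected.
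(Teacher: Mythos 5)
Your argument is correct and takes essentially the same route as the paper's proof: both extract $t^w = a_i - w_0 a_i$, use integrality of $t^w$ to show that $w_0$ preserves $A_i$ and $B_i$, and combine the parity condition coming from $t^w \in Q^\vee$ with evenness of $w_0 \in S_{2n}^\circ$ to force both block restrictions to be even (giving injectivity and, implicitly, surjectivity of the projection onto $S_{2i}^\circ \times S_{2(n-i)}^\circ$). Your sign identity $\operatorname{sgn}(w_A) = (-1)^{s_A}$ is the same content as the paper's computation $\varepsilon(t^w) \equiv -r(\sigma) \pmod 2$, with $r(\sigma)$ counting the sign-changes of the $A_i$-restriction; your explicit justification via generators of the signed permutation group is a cleaner packaging of a point the paper treats somewhat informally.
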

\begin{proof}
    Let $w=t^ww_0\in W_i\sset W_\aff$. By definition, we have $wa_i=a_i$. Namely,  \begin{flalign}
    	a_i(w_0\inverse(j))+t^w(j)=a_i(j) \text{\ for all $1\leq j\leq 2n$, and $\varepsilon(t^w)=0$}.  \label{am}
    \end{flalign}
    Recall that $\varepsilon$ (see Definition \ref{defnWprime}) is given by $\varepsilon(t^w)=\sum_{j=1}^nt^w(j)\mod 2$.
    By \eqref{am}, the translation part \begin{equation}
    	t^w=a_i-w_0a_i \label{tw}
    \end{equation} is determined by $w_0$. 
    Since $w_0\in S_{2n}^\circ$ and $a_i-w_0a_i=t^w\in\BZ^{2n}$, we conclude that $w_0$ permutes the subsets $A_i$ and $B_i$.  
    Write $\sigma$ for the restriction of $w_0\inverse$ on $A_i$. Then $\sigma$ is a composition of permutations of the form $\tau_{j'j}\coloneqq (j'j)(j'^*j^*)$ and $\tau_j\coloneqq (jj^*)$, where $j,j'\in A_i$ and $j'\neq j^*$. Let $r(\sigma)$ be the number of transpositions of the form $\tau_i$ occurring in a (reduced) decomposition of $\sigma$ into transpositions. Then $\sigma$ is an even permutation if and only if $r(\sigma)$ is even. Recall that $a_i=((-1/2)^{(i)},0^{(2n-2i)},(1/2)^{(i)})$. By \eqref{tw}, we have \begin{flalign*}
        \varepsilon(t^w) &=\sum_{k=1}^n(a_i(k)-a_i(w_0\inverse(k))) \mod 2  \\ &=\rbra{-(1/2)i-\sum_{k=1}^i a_i(w_0\inverse(k))-\sum_{k=i+1}^n a_i(w_0\inverse(k))}\mod 2\\ &=\rbra{-(1/2)i-\sum_{k=1}^i a_i(\sigma(k))} \mod 2 \\ &=\rbra{-(1/2)i-(-(1/2)i+r(\sigma))} \mod 2\\ &= -r(\sigma) \mod 2.
    \end{flalign*}
    Since $\varepsilon(t^w)=0$, we obtain $r(\sigma)$ is even; hence $\sigma$ is an even permutation.
    As $w_0\inverse\in S_{2n}^\circ$ is even, we obtain that 
        \text{$w_0$ restricting to $A_i$ (resp. $B_i$) is even. }
   Therefore, the projection $\wt{W}^\circ\twoheadrightarrow S_{2n}^\circ$ induces a homomorphism $$f\colon W_i\ra  S_{2i}^\circ\times S_{2(n-i)}^\circ. $$
    By \eqref{tw}, the map $f$ is an injection. For any $w_0\in S_{2i}^\circ\times S_{2(n-i)}^\circ$, the equation \eqref{tw} defines an element $t^ww_0$ in $W_i$. It follows that $f$ is also surjective; hence $W_i\simeq S_{2i}^\circ\times S_{2(n-i)}^\circ$. 
\end{proof}

Using the obvious isomorphism \[\CO/\pi\CO\simeq k\simeq \CO_K/t\CO_K, \] we may identify \[\Lambda_I\otimes_\CO k\simeq \lambda_I\otimes_{\CO_K}k. \] 
This induces isomorphisms \begin{flalign}
	\sG_I\otimes_\CO k\simeq \CP_I\otimes_{\CO_K}k \text{\ and\ } \sG^\circ_I\otimes_\CO k\simeq \CP_I^{\circ}\otimes_{\CO_K}k.  \label{sGisom}
\end{flalign}
Let $R$ be a $k$-algebra and let $(\CF_{\Lambda_i})_{\Lambda_i\in\Lambda_I}\in \RM^\naive_I(R)$ (see \S \ref{subsec-defnmodels}). Denote by  $$\wt{\CF}_{\Lambda_i}\sset \lambda_i\otimes_{\CO_K}R[[t]] $$ the inverse image of the $R$-submodule $\CF_{\Lambda_i}\sset \Lambda_i\otimes_\CO R\simeq\lambda_i\otimes_kR$ along the natural reduction map $\lambda_i\otimes_{\CO_K}R[[t]]\twoheadrightarrow \lambda_i\otimes_kR$. In this way, we obtain a closed immersion (we may globally take the scalar $\alpha$ in Definition \ref{defnFlag} (3), cf. \cite[\S 7.1]{smithling2011topological}) \begin{flalign*}
    \RM^\naive_{I,k}\hookrightarrow \sFl_I,\quad (\CF_{\Lambda_i})\mapsto (\wt{\CF}_{\Lambda_i}).
\end{flalign*}
By construction, the action of $L^+\CP_I$ on $\sFl_I$ preserves the closed subschemes $\RM^\naive_{I,k}$. By Lemma \ref{lemstable} and \eqref{sGisom}, the $L^+\CP^{\circ}_I$-action preserves $\RM^\pm_{I,k}$. Therefore, the underlying topological spaces of $\RM^\naive_{I,k}$ and $\RM^\pm_{I,k}$ are unions of Schubert cells in $\sFl_I$.

\begin{defn}\label{defn-permi}
    Fix a non-empty subset $I\sset [0,n]$. 
    For $w\in\wt{W}$, we say $w$ is \dfn{naively-permissible} (resp. \dfn{$\pm$-permissible})  if the Schubert cell $C_w$ is contained in $\RM^\naive_{I,k}$ (resp. $\RM^\pm_{I,k}$.
\end{defn}

It is sometimes convenient to consider the following equivalent definition of $\RM^\naive_I$ (see \cite[\S 2]{smithling2011topological}). Denote by $(\varepsilon_i)_{i=1}^{2n}$ the standard basis of $\CO^{2n}$. For $1\leq i\leq 2n$, let $f_i\colon\CO^{2n}\ra \CO^{2n}$ denote the $\CO$-linear map sending $\varepsilon_i$ to $\pi\varepsilon_i$, and sending $\varepsilon_j$ to $\varepsilon_j$ for $j\neq i$. Then there exists a unique isomorphism of chains of $\CO$-modules \begin{equation} \label{fi}
    \begin{split}
    	\xymatrix{ \cdots\ar@{^{(}->}[r] &\Lambda_0\ar[d]^{\simeq}\ar@{^{(}->}[r] &\Lambda_1\ar@{^{(}->}[r]\ar[d]^{\simeq} &\cdots \ar@{^{(}->}[r] &\Lambda_{2n}\ar[d]^{\simeq}\ar@{^{(}->}[r] &\cdots \\ \cdots\ar[r]^{f_{2n}} &\CO^{2n}\ar[r]^{f_1} &\CO^{2n}\ar[r]^{f_2} &\cdots\ar[r]^{f_{2n}} &\CO^{2n}\ar[r]^{f_1} &\cdots
	 }
    \end{split} 
\end{equation}
such that the leftmost vertical arrow identifies the standard ordered basis $e_1,e_2,\ldots,e_{2n}$ of $\Lambda_0$ with the basis $\varepsilon_1,\ldots,\varepsilon_{2n}$ of $\CO^{2n}$. We equip $\CO^{2n}$ with the perfect split symmetric $\CO$-bilinear pairing whose corresponding matrix is $H_{2n}$ with respect to the basis $(\varepsilon_i)_{1\leq i\leq 2n}$.  Write \begin{flalign*}
	  I=\cbra{i_0<i_1<\cdots<i_r}\sset [0,n].
\end{flalign*} For an $\CO$-algebra $R$, the set $\RM^\naive_{I}(R)$ of $R$-points consists of $R$-submodules $\CF_{j}$ of $R^{2n}$ for $j\in 2n\BZ\pm I$ such that 
\begin{itemize}
	\item[LM$1'$.] each $\CF_j$ is a locally direct summand of rank $n$ of $\Lambda_j\otimes  R$;
	\item[LM$2'$.] $(f_{ij}\otimes R)(\CF_{i})\sset \CF_{j}$ for $i<j$ in $2n\BZ\pm I$, where $f_{ij}$ denotes the composition of maps $f_k$ corresponding to the inclusion $\Lambda_i\hookrightarrow \Lambda_j$ in \eqref{fi};
	\item[LM$3'$.] for $j\in 2n\BZ\pm I$, we have $\CF_j=\CF_{2n+j}$ and $\CF^\perp_j=\CF_{2n-j}$. Here, $\CF_j^\perp$ denotes the orthogonal complement of $\CF_j$ in $R^{2n}$ with respect to the split symmetric pairing on $R^{2n}$.
\end{itemize}

\subsection{Schubert cells in $\RM^\naive_{I,k}$ and $\RM^\pm_{I,k}$}
Fix a non-empty subset $I\sset [0,n]$.
\begin{defn}\label{defn-face}
    An \dfn{$I$-face} (or simply \dfn{face}) is a family $(v_j)_{j\in 2n\BZ\pm I}$ of vectors in $\BZ^{2n}$ such that for all $i,j\in 2n\BZ\pm I$, we have \begin{enumerate}
        \item $v_{i+2n}=v_i-\mathbf 1$;
        \item $v_i\geq v_j$ if $i\leq j$; 
        \item $\Sigma v_i-\Sigma v_{j}=j-i$;
        \item there exists $d\in\BZ$ such that $v_j+v_{-j}^*=\mathbf d$. 
    \end{enumerate}
\end{defn}
Note that the action of $\wt{W}$ on $\BR^{2n}$ (see \eqref{actionW}) induces an action on the set of faces.
\begin{defn}
	For $j=2nd+i$ with $d\in\BZ$ and $0\leq i\leq 2n$, set \begin{flalign*}
		\omega_j\coloneqq ((-1)^{(i)},0^{(2n-i)})-\mathbf d \in \BZ^{2n}.
	\end{flalign*} 
\end{defn} 
The family $(\omega_j)_{j\in 2n\BZ\pm I}$ is clearly a face.

Let $R$ be a $k$-algebra. By construction, the image of the embedding $\RM^\naive_{I,k}(R)\hookrightarrow \sFl_I(R)$ consists of $(L_j)_{j\in 2n\BZ\pm I}$ in $\sFl_I(R)$ such that, for $j=2n\BZ\pm I$, \begin{itemize}
    \item[(M1)] $\lambda_{j}\otimes_{\CO_K} R[[t]]\supset L_j\supset t\lambda_{j}\otimes_{\CO_K}R[[t]]$, and
    \item[(M2)] the $R$-module $(\lambda_j\otimes_{\CO_K}R[[t]])/L_j$ is locally free of rank $n$. 
\end{itemize}

\begin{lemma}\label{permp1p2}
	Let $w\in\wt{W}$. Then $w$ is naively-permissible (see Definition \ref{defn-permi}) if and only if for any $d\in\BZ$ and $0\leq i\leq 2n$ with $j=2nd+i\in 2n\BZ\pm I$, we have \begin{enumerate}
    \item[(P1)] $\omega_j\leq w\omega_j\leq \omega_j+\mathbf 1$;
    \item[(P2)] $\Sigma (w\omega_j)=n-i-2nd$.
    \end{enumerate}
\end{lemma}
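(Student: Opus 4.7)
The plan is to translate each of the conditions (M1) and (M2), which cut out $\RM^{\naive}_{I,k}$ inside $\sFl_I$, into the combinatorial conditions (P1) and (P2) on the element $w\omega_j$. The key observation is that the valuation vector of a lattice in $V$ spanned by monomials $t^{m_k}e_k$ completely determines it, and the correspondence between lattices in the standard apartment and vectors in $\BZ^{2n}$ is $\wt{W}$-equivariant.

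First, I would note that since $L^+\CP_I^\circ$ acts on $\sFl_I$ preserving the closed subscheme $\RM^{\naive}_{I,k}$, and $C_w = L^+\CP_I^\circ \cdot w$, the cell $C_w$ lies in $\RM^{\naive}_{I,k}$ if and only if the distinguished point represented by the lattice chain $(w\lambda_j)_{j\in 2n\BZ\pm I}$ satisfies (M1) and (M2). So the problem reduces to computing $w\lambda_j$ and checking those two conditions.

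Next, I would carry out the computation of $w\lambda_j$. Writing $w = t^w w_0$ with $t^w\in\BZ^{2n}$ and $w_0\in S_{2n}^*$, the action on $V=K^{2n}$ sends $e_k$ to $t^{t^w(w_0(k))} e_{w_0(k)}$. Since $\lambda_j$ is the lattice with basis $\{t^{\omega_j(k)} e_k\}_{k=1}^{2n}$, applying $w$ and reindexing by $\ell = w_0(k)$ yields that $w\lambda_j$ has basis $\{t^{w\omega_j(\ell)} e_\ell\}$, in accordance with the formula $(w\omega_j)(\ell) = \omega_j(w_0\inverse\ell)+t^w(\ell)$ from \eqref{actionW}. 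In other words, the valuation vector of $w\lambda_j$ in the standard basis is exactly $w\omega_j$.

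With this identification in hand, the translation of (M1) and (M2) is routine. For two lattices with valuation vectors $v,v'$, one has $L\sset L'$ if and only if $v\geq v'$; applied to $t\lambda_j\sset w\lambda_j\sset \lambda_j$ and using the fact that the valuation vector of $t\lambda_j$ is $\omega_j+\mathbf 1$, condition (M1) becomes precisely (P1): $\omega_j\leq w\omega_j\leq \omega_j+\mathbf 1$. For (M2), the $k$-dimension of the quotient $\lambda_j/w\lambda_j$ equals the $\CO_K$-length, namely $\Sigma(w\omega_j)-\Sigma(\omega_j)$, and a direct computation gives $\Sigma\omega_j=-i-2nd$ for $j=2nd+i$. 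Requiring this dimension to equal $n$ then becomes $\Sigma(w\omega_j)=n-i-2nd$, which is (P2). Combining both directions proves the lemma.

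The only point requiring slight care is the very first reduction: verifying that ``$C_w\sset\RM^{\naive}_{I,k}$'' is genuinely equivalent to the single chain $(w\lambda_j)$ lying in $\RM^{\naive}_{I,k}(k)$. This follows since $L^+\CP_I^\circ$ is a pro-smooth connected group scheme acting on both sides, $\RM^{\naive}_{I,k}$ is $L^+\CP_I^\circ$-stable (closed subscheme), and $C_w$ is the reduced image of $L^+\CP_I^\circ\to\sFl_I$ at $w$; so membership for one point propagates through the orbit. Beyond that, the proof is just the translation outlined above, with no substantive obstacle.
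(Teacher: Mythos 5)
Your proof is correct and follows essentially the same approach as the paper: identify $w$ with the lattice chain $(w\lambda_j)_j$, compute its valuation vector to be $w\omega_j$, and then translate the moduli conditions (M1), (M2) into (P1), (P2). The paper compresses the lattice computation into "a direct computation gives" and takes the reduction to the base point of $C_w$ as implicit; you simply spell both steps out.
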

\begin{proof}
	(cf. \cite[\S 7.3]{smithling2011topological}) Note that $w$ corresponds to $(w\lambda_j)_{j\in 2n\BZ\pm I}\in\sFl_I(k)$. If $w\omega_j=(r_1,\ldots,r_{2n})\in\BZ^{2n}$, then a direct computation gives  \begin{flalign}
		  w\lambda_j=k[[t]]\pair{t^{r_1}e_1,\ldots,t^{r_{2n}}e_{2n}}. \label{ai}
	\end{flalign} 
	By previous discussion, $w$ is naively-permissible if and only if $w\lambda_j$ satisfies (M1) and (M2). By \eqref{ai}, we obtain that (M1) (resp. (M2)) is equivalent to (P1) (resp. (P2)). This proves the lemma.
\end{proof}


\begin{lem} \label{lem-perm}
    Let $w\in\wt{W}$ be naively-permissible. Let $j\in 2n\BZ\pm I$. \begin{enumerate}
        \item Set $v_j\coloneqq w\omega_j$. We have $v_j+v_{-j}^*=\mathbf 1$. In particular, $v_{-j}$ is completely determined by $v_j$. 
        \item Set $\mu^w_{j}\coloneqq w\omega_{j}-\omega_{j}$. The point $(w\lambda_j)_{j\in 2n\BZ\pm I}\in \sFl_I(k)$ corresponds to $(\CF^w_j)_{j\in 2n\BZ\pm I}\in \RM^\naive_I(k)$, where \begin{flalign*}
        \CF^w_j&=k\pair{\varepsilon_i\ |\ \mu_j^w(i)=0 \text{\ and\ } 1\leq i\leq 2n}.
    \end{flalign*}
    \end{enumerate} 
\end{lem}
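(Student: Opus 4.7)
The plan is to prove (1) by a direct combinatorial calculation involving the action of $\wt W$ on $\BZ^{2n}$, using the permissibility conditions (P1) and (P2) from Lemma \ref{permp1p2} to pin down the relevant scalar. Part (2) then follows by unwinding the embedding $\RM^\naive_{I,k}\hookrightarrow \sFl_I$ via the explicit description \eqref{ai} of $w\lambda_j$.

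For (1), write $w=t^ww_0$ with $w_0\in S_{2n}^*$ and $t^w\in\BZ^{2n}$ as in \eqref{weyl}. A direct case check at $j=2nd+i_0$ (with $0\leq i_0\leq 2n$) gives $\omega_j+\omega_{-j}^*=\mathbf 0$. Because $w_0\in S_{2n}^*$ satisfies $w_0(i^*)=w_0(i)^*$, the involution $v\mapsto v^*$ commutes with $w_0$; and because $t^w$ lies in the cocharacter lattice described in \eqref{weyl}, the quantity $c\coloneqq t^w(i)+t^w(i^*)$ is independent of $i$, whence $(t^w)^*=\mathbf c-t^w$. Combining these facts,
\[
v_j+v_{-j}^*=w_0(\omega_j+\omega_{-j}^*)+t^w+(t^w)^*=\mathbf c.
\]
Since any permutation preserves coordinate sums, $\Sigma \mu_j^w=\Sigma t^w=\sum_{i=1}^n(t^w(i)+t^w(i^*))=nc$. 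On the other hand, (P2) combined with $\Sigma\omega_j=-i_0-2nd$ forces $\Sigma\mu_j^w=n$, hence $c=1$. In particular $v_{-j}=(\mathbf 1-v_j)^*$ is determined by $v_j$.

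For (2), by the construction of the embedding $\RM^\naive_{I,k}\hookrightarrow \sFl_I$, the lattice $\wt\CF_j=w\lambda_j$ is the preimage of $\CF_j^w\sset \lambda_j\otimes_{\CO_K}k$ under reduction modulo $t$; therefore
\[
\CF_j^w=(w\lambda_j)\big/(t\lambda_j\otimes_{\CO_K}k[[t]])\sset \lambda_j/t\lambda_j.
\]
By \eqref{ai}, $w\lambda_j=k[[t]]\pair{t^{v_j(k)}e_k}_{1\leq k\leq 2n}$ while $t\lambda_j=k[[t]]\pair{t^{\omega_j(k)+1}e_k}$. Condition (P1) gives $v_j(k)-\omega_j(k)=\mu_j^w(k)\in\cbra{0,1}$, so the generator $t^{v_j(k)}e_k$ survives in the quotient precisely when $\mu_j^w(k)=0$. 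Under the canonical identification $\lambda_j/t\lambda_j\simeq k^{2n}$ induced by the maps $f_i$ of \eqref{fi} together with $\pi\leftrightarrow t$ on residues, the class of $t^{\omega_j(k)}e_k$ corresponds to $\varepsilon_k$, and the claimed description of $\CF_j^w$ follows.

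The only bookkeeping subtleties are the interplay between the $*$-involution and the semidirect-product decomposition $\wt W=X_*(T)\rtimes W$ used in part (1), and the implicit identification $\lambda_j/t\lambda_j\simeq k^{2n}$ at each index $j$ in part (2); neither step is conceptually difficult.
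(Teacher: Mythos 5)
Your proof is correct. For part~(2) you follow essentially the paper's route: read off $w\lambda_j$ from \eqref{ai}, reduce modulo $t\lambda_j$, and identify via \eqref{fi}; you correctly invoke (P1) for $\mu_j^w(k)\in\{0,1\}$. For part~(1), however, you take a genuinely different and arguably cleaner path. The paper first notes that $(v_j)_j=(w\omega_j)_j$ is a face (because $\wt W$ acts on faces) to obtain an a priori constant $d$ with $v_j+v_{-j}^*=\mathbf d$, then bounds $d\in\{0,1,2\}$ by the sandwich inequality (P1), and rules out $d=0,2$ using (P2). You instead compute the constant directly: you verify $\omega_j+\omega_{-j}^*=\mathbf 0$, observe that the $*$-involution is linear and commutes with any $w_0\in S_{2n}^*$, and use the cocharacter-lattice constraint to get $t^w+(t^w)^*=\mathbf c$ with $c=t^w(1)+t^w(2n)$, so $v_j+v_{-j}^*=\mathbf c$; then $\Sigma\mu_j^w=\Sigma t^w=nc$ together with (P2) forces $c=1$. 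Your argument for (1) never needs (P1), identifies the constant intrinsically as the ``similitude'' entry of the translation part, and bypasses the appeal to the face axiom (4), at the mild cost of the explicit check $\omega_j+\omega_{-j}^*=\mathbf 0$ and the compatibility of $*$ with $S_{2n}^*$. Both proofs are sound; yours is more computational and self-contained, the paper's is shorter once the face formalism is in place. One small remark: it is worth adding (as the paper does at the end of its proof) that $\Sigma\mu_j^w=n$ with entries in $\{0,1\}$ guarantees $\CF_j^w$ has the required rank $n$, so that the resulting point actually lies in $\RM^\naive_I(k)$.
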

\begin{proof}
    (1) Since $(v_j)_{j\in 2n\BZ\pm I}$ is a face, there exists a constant $d\in\BZ$ such that $$d=v_j(i)+v_{-j}(i^*)$$ for all $1\leq i\leq 2n$. By condition (P1), we have  \begin{flalign*}
        \omega_{j}(i)+\omega_{-j}(i^*)\leq v_j(i)+v_{-j}(i^*)\leq \omega_{j}(i)+\omega_{-j}(i^*)+2.
    \end{flalign*}
    Since $\omega_{-j}(i)+\omega_{j}(i^*)=0$ for all $i$, we obtain that $d\in\cbra{0,1,2}$. Suppose $d=0$. Then $v_j(i)$ must attain its minimal possible value, namely $v_j=\omega_{j}$. However, this contradicts condition (P2). Similarly, $d=2$ is also impossible. We conclude that $d=1$.

    (2) By \eqref{ai} in the proof of Lemma \ref{permp1p2}, the $k[[t]]$-lattice $w\lambda_j$ is \[k[[t]]\pair{t^{r_1}e_1,\ldots, t^{r_{2n}}e_{2n} }, \]
        where $(r_1,\ldots,r_{2n})=w\omega_j\in\BZ^{2n}$. Then we obtain that $$\CF^w_j=w\lambda_j/t\lambda_j\simeq  k\pair{\varepsilon_i\ |\ \mu_j^w(i) =0},$$
        where the last isomorphism is induced by the isomorphism $\Lambda_j\simeq\CO^{2n}$ in \eqref{fi}.
        We note that as $w$ is naively-permissible, $\CF^w_j$ is indeed $n$-dimensional subspace of $k^{2n}$.
\end{proof}

\begin{defn}
	(cf. \cite[\S 7.5]{smithling2011topological}) Let $w\in \wt{W}$ be naively-permissible. For $j\in 2n\BZ\pm I$, we say that the vector $\mu_j^w\in \BZ^{2n}$ in Lemma \ref{lem-perm} (2) is \dfn{totally isotropic} if $$\mu_j^w+(\mu_j^w)^*=\mathbf 1.$$  
\end{defn}
For a naively-permissible $w$, set \begin{flalign*}
		 E_j^w\coloneqq \cbra{i\ |\ \mu_j^w(i)=0},\ j\in 2n\BZ\pm I.
	\end{flalign*}
	Then $E_j^w$ is a subset of $\sbra{1,2n}$ of cardinality $n$. By definition, $\mu_j^w$ is totally isotropic if and only if $E_j^w=(E_j^w)^\perp$ (notation as in \S \ref{subsec-notation-intro}).

\begin{prop}\label{prop-pmperm}
	Let $w\in\wt{W}$ be naively-permissible. Then $w$ defines a point $(\CF_j^w)_{2n\BZ\pm I}\in \RM^\naive_I(k)$ by Lemma \ref{lem-perm}. Then the following are equivalent. \begin{enumerate}[label=(\roman*)]
		\item $w$ is $\pm$-permissible (see Definition \ref{defn-permi}).
		\item Denote $N_+\coloneqq k\pair{\varepsilon_1,\ldots,\varepsilon_n}$ and $N_-\coloneqq k\pair{\varepsilon_1,\ldots,\varepsilon_{n-1},\varepsilon_{n+1}}$. If $\CF^w_j\sset k^{2n}$ is totally isotropic in $k^{2n}$, then $\CF^w_j$ and $N_\pm$ specify points on the same connected component of the orthogonal Grassmannian $\OGr(n,k^{2n})$.
		\item For all totally isotropic vectors $\mu_j^w$, we have $\mu_j^w\in W^\circ\mu_\pm=S_{2n}^\circ\mu_\pm$.
	\end{enumerate}   
\end{prop}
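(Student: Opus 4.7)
\textbf{Proof plan for Proposition \ref{prop-pmperm}.} I would prove the proposition by first establishing the equivalence (ii) $\Leftrightarrow$ (iii), which is a standard fact about the orthogonal Grassmannian, and then the equivalence (i) $\Leftrightarrow$ (iii), which unpacks the spin condition LM4$\pm$ at the given $k$-point. Throughout, it is enough to check conditions at each $j \in 2n\BZ \pm I$ separately, and by the periodicity $\CF^w_{j+2n} = \CF^w_j$ of LM3$'$ only finitely many $j$'s are really at issue.

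For (ii) $\Leftrightarrow$ (iii): Assume $\CF^w_j$ is totally isotropic in $k^{2n}$, equivalently $E^w_j = (E^w_j)^\perp$, equivalently $\mu^w_j + (\mu^w_j)^* = \mathbf 1$. Then $\CF^w_j$ is a $k$-point of $\OGr(n,k^{2n})$, which has exactly two connected components; these components are the two $S_{2n}^\circ$-orbits containing the standard coordinate isotropic subspaces $N_+$ and $N_-$. Since the map sending an isotropic coordinate subspace $k\pair{\varepsilon_i\ |\ i\in E}$ to its characteristic vector $\mu\in\cbra{0,1}^{2n}$ (with $\mu(i)=0$ iff $i\in E$) intertwines the action of $S_{2n}^\circ$ on isotropic coordinate subspaces with its action on $\cbra{0,1}^{2n}$, the component of $\CF^w_j$ coincides with the component of $N_\pm$ precisely when $\mu^w_j$ lies in $S_{2n}^\circ\mu_\pm$. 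This gives the equivalence (ii) $\Leftrightarrow$ (iii).

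For (i) $\Leftrightarrow$ (iii): By Lemma \ref{lem-perm}(2), the line $\bigwedge^n_{k}\CF^w_j$ is spanned by the standard monomial $\varepsilon_{E^w_j}\coloneqq \varepsilon_{i_1}\wedge\cdots\wedge \varepsilon_{i_n}$, where $E^w_j=\cbra{i_1<\cdots<i_n}$. Condition LM4$\pm$ at the index $j$ thus amounts to the single linear inclusion
\begin{flalign*}
    k\cdot \varepsilon_{E^w_j}\sset \Im\bigl[\,(\wedge^n_\CO\Lambda_j)_\pm\otimes_\CO k\lra \wedge^n_k(\Lambda_j\otimes k)\,\bigr].
\end{flalign*}
The combinatorial heart of the argument is the following description of this image in the standard monomial basis, which I would extract from \cite[\S 7]{pappas2009local} and \cite[\S 2]{smithling2011topological}: for a subset $E\sset [1,2n]$ of cardinality $n$, the monomial $\varepsilon_E$ lies in the image of $(\wedge^n_\CO\Lambda_j)_\pm\otimes k$ if and only if either $E\neq E^\perp$, or $E=E^\perp$ and the characteristic vector of $E$ lies in $S_{2n}^\circ\mu_\pm$. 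Granting this, LM4$\pm$ at index $j$ can only fail when $\mu^w_j$ is totally isotropic, in which case it fails precisely when $\mu^w_j\notin S_{2n}^\circ\mu_\pm$. Thus (i) and (iii) are equivalent.

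The main obstacle is the combinatorial description of $(\wedge^n_\CO\Lambda_j)_\pm\otimes k$ in the standard monomial basis. For the self-dual lattices $\Lambda_0$ and $\Lambda_n$ this follows from the classical fact that $W_\pm$ is the span of $\wedge^n U$ as $U$ ranges over the totally isotropic subspaces on the $\pm$-component, together with matching Pappas--Rapoport's sign convention for the involution $a$ against the parity distinguishing $S_{2n}^\circ\mu_+$ from $S_{2n}^\circ\mu_-$. For a general $\Lambda_j$ the pairing $\psi$ on $\Lambda_j$ is only almost self-dual, so the intersection $W_\pm\cap \wedge^n_\CO\Lambda_j$ must be analyzed using the explicit rescaling built into the maps $f_i$ of \eqref{fi}; this is the step requiring the most care, and reduces to a direct linear-algebra computation using the basis adapted to $\Lambda_j$.
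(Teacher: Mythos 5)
Your overall structure is right and does match the route taken in Smithling's \cite[Proposition 7.4.3 and \S 7.5]{smithling2011topological}, which is what the paper's proof simply cites (the paper gives no direct argument, only noting that Smithling's Iwahori-case arguments carry over to general $I$). The equivalence (ii) $\Leftrightarrow$ (iii) is indeed the standard description of the two components of $\OGr(n,2n)$ in terms of the $S_{2n}^\circ$-orbit of the characteristic vector, and that part of your argument is fine.

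The gap is in the "combinatorial heart," and it is not merely a matter of unfinished bookkeeping: the description you assert is false as stated. Working in the basis $e'_E$ of $\wedge^n_\CO\Lambda_j$ coming from \eqref{fi}, one has $a(e'_E) = s_E\, \pi^{-m(E)} e'_{E^\perp}$ for a sign $s_E$ and an integer $m(E) = |E\cap[1,j]| + |E\cap[2n-j+1,2n]| - j$. Analyzing $W_\pm \cap \wedge^n_\CO\Lambda_j$ pair by pair $\{E, E^\perp\}$ and reducing mod $\pi$ shows that $\bar\varepsilon_E$ lies in the image of $(\wedge^n_\CO\Lambda_j)_\pm\otimes k$ if and only if $m(E) < 0$, or $E = E^\perp$ with the matching sign. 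In particular if $E\neq E^\perp$ but $m(E) \geq 0$, then $\bar\varepsilon_E$ is \emph{not} in the image, contradicting your ``$E \neq E^\perp$ suffices'' criterion. What saves the argument—and what you never invoke—is that condition LM$3'$ for a naively-permissible $w$ forces $|E^w_j\cap([1,j]\cup[2n-j+1,2n])|\leq j$ (at most one of each pair $\{i,i^*\}$ in the ``A-block'' belongs to $E^w_j$), hence $m(E^w_j)\leq 0$; and $m(E^w_j)=0$ together with the complementary constraint on the ``B-block'' already forces $E^w_j = (E^w_j)^\perp$. So for the $E$ that actually occur, your dichotomy is correct. Since you neither prove the combinatorial lemma nor note this permissibility input that rescues it, the (i) $\Leftrightarrow$ (iii) direction has a genuine gap as written, even though the strategy is the right one and the conclusion is correct.
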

\begin{proof}
	See \cite[Proposition 7.4.3 and \S 7.5]{smithling2011topological}. Although the results in \loccit\ are stated for $I=\sbra{0,n}$, the same arguments carry over when $I$ is any subset.
\end{proof}

\begin{corollary} \label{coro-inWcirc}
	Let $w\in \wt{W}$.
	Suppose that $C_w$ is a Schubert cell in $\RM^\pm_{I,k}$. Then there exists some $w'\in \wt{W}^\circ$ such that $C_w=C_{w'}$.
\end{corollary}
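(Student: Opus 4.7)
The proof will split according to the type of $I$. When $I$ is of type~II or~III, Proposition~\ref{lem-cell} already indexes the Schubert cells of $\sFl_I$ by double cosets of elements of $\wt{W}^\circ$ (or of $W'\subset\wt{W}^\circ$), so the statement is automatic: for any $w\in\wt{W}$ the cell $C_w$ admits a representative $w'\in\wt{W}^\circ$. The only substantive case is type~I, where $\{0,n\}\subset I$ and $\sFl_I=\sFl'_I\sqcup\tau\sFl'_I$ decomposes into two connected components with the cells in $\tau\sFl'_I$ indexed by elements of $\tau\wt{W}^\circ$. There the plan is to show that no $w\in\tau\wt{W}^\circ$ can be $\pm$-permissible, so $\pm$-permissibility forces $w\in\wt{W}^\circ$ and one may take $w'=w$.

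The argument uses Proposition~\ref{prop-pmperm}(iii) applied at the two indices $j=0$ and $j=n$. Writing $w=t^ww_0$ with $w_0\in S_{2n}^*$, we have $\mu_0^w=t^w$, which is totally isotropic by Lemma~\ref{lem-perm}(1) (with $j=0$, using $\omega_0=0$). For $j=n$, the identity $\omega_n-\omega_{-n}=-\mathbf{1}$ together with the fact that any coordinate permutation fixes constant vectors gives $\mu_n^w=\mu_{-n}^w$; combined with the duality $\mu_j^w+(\mu_{-j}^w)^*=\mathbf{1}$ that is a consequence of Lemma~\ref{lem-perm}(1), this shows $\mu_n^w$ is totally isotropic as well. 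Proposition~\ref{prop-pmperm}(iii) now applies at both indices and forces $\mu_0^w,\mu_n^w\in W^\circ\mu_\pm$, so these two vectors share the parity $\sum_{j=1}^n v(j)\bmod 2$.

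An explicit computation with $A\coloneqq w_0(\{1,\ldots,n\})$ and $a\coloneqq|A\cap\{1,\ldots,n\}|$ gives
\[
\sum_{j=1}^n\mu_n^w(j)-\sum_{j=1}^n\mu_0^w(j)=n-a,
\]
so the parity equality above forces $n-a\equiv 0\pmod{2}$. A short sign computation for $w_0\in S_{2n}^*$, decomposing it into transpositions of the forms $(i,j)(i^*,j^*)$ and $(i,i^*)$, shows that $w_0$ is an even permutation of $[1,2n]$ (i.e., $w_0\in S_{2n}^\circ$) if and only if the quantity $n-a=|A\cap\{n+1,\ldots,2n\}|$ is even. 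Hence the parity constraint forces $w_0\in S_{2n}^\circ$, whence $w\in\wt{W}^\circ$, contradicting $w\in\tau\wt{W}^\circ$.

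The main technical point is that $\mu_n^w$ is automatically totally isotropic for naively-permissible $w$, since Proposition~\ref{prop-pmperm}(iii) only constrains those $\mu_j^w$ which are totally isotropic; the identities $\mu_n^w=\mu_{-n}^w$ and $\mu_j^w+(\mu_{-j}^w)^*=\mathbf{1}$ from Lemma~\ref{lem-perm}(1) are precisely what ensures the parity argument can be applied at both $j=0$ and $j=n$ simultaneously, and thereby cleanly pins down the component of $\wt{W}$ containing $w$.
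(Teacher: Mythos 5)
Your proposal is correct and follows essentially the same approach as the paper's: reduce to type~I via Proposition~\ref{lem-cell}, check that $\mu_0^w$ and $\mu_n^w$ are totally isotropic, apply Proposition~\ref{prop-pmperm}(iii) at $j=0,n$, and compare $\varepsilon$-parities to deduce $w_0\in S_{2n}^\circ$. The only cosmetic difference is that the paper reads total isotropy of $\mu_0^w,\mu_n^w$ off the self-duality condition LM$3'$ directly, whereas you recover the same fact via the identities $\mu_n^w=\mu_{-n}^w$ and $\mu_j^w+(\mu_{-j}^w)^*=\mathbf 1$ from Lemma~\ref{lem-perm}(1).
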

\begin{proof}
	By Proposition \ref{lem-cell}, the assertion is clear when $I$ is of type \RNum{2} or \RNum{3}. Now assume that $I$ is of type \RNum{1}, i.e., $I$ contains $\cbra{0,n}$. 
	
	Note that $w$ is $\pm$-permissible by assumption on $w$. By definition of $\RM^\naive_{I}$, the subspaces $\CF_0^w$ and $\CF_{n}^w$ are totally isotropic in $k^{2n}$. By Proposition \ref{prop-pmperm}, we have $\mu_0^w, \mu_n^w\in S_{2n}^\circ\mu_\pm$. We claim that $w\in \wt{W}^\circ$. Write $w=t^ww_0\in\wt{W}= X_*(T)\rtimes S_{2n}^*$. It suffices to show that $w_0\in S_{2n}^\circ$, i.e., $w_0$ is an even permutation. We treat the case $\mu_+$; the case $\mu_-$ is completely analogous. 
	
	Denote by $$\varepsilon\colon \BZ^{2n}\ra \BZ/2\BZ$$ the map sending $v\in \BZ^{2n}$ to $\sum_{i=1}^nv(i)\mod 2$. Since $\mu_+= (1^{(n)},0^{(n)})$, the vector $\mu_0^w$ (resp. $\mu_n^w$) lies in $S_{2n}^\circ\mu_+$ if and only if $\varepsilon(\mu_0^w)=\varepsilon(\mu_+)=n\mod 2$ (resp. $\varepsilon(\mu_n^w)=n\mod 2$).   By definition, we have \begin{flalign*}
		  \mu_0^w=t^w \text{\ and\ } \mu_n^w=t^w+w_0\omega_n-\omega_n.
	\end{flalign*}
	It follows from the property $\mu_0^w,\mu_n^w\in S_{2n}^\circ\mu_+$ that $$\varepsilon(w_0\omega_n-\omega_n)=0.$$
	Since $\omega_n=((-1)^{(n)},0^{(n)})$, we obtain that $w_0$ is an even permutation (cf. the proof of Lemma \ref{lem-Wmperm}).
\end{proof}

\section{Topological flatness of spin local models} \label{sec-topoflat}
In this section, we prove Theorem \ref{intro-thmtopo}, namely that the spin local model is topologically flat over $\CO$ for any parahoric level structure. We first treat the pseudo-maximal parahoric case in \S \ref{subsec-pseud}, establishing Theorem \ref{intro-thmpseumax}. In \S \ref{subsec-genepara}, we then deduce topological flatness for an arbitrary parahoric level by reducing to the pseudo-maximal case, using the vertexwise criterion \cite{haines2017vertexwise} for admissible subsets.

\subsection{Pseudo-maximal parahoric case} \label{subsec-pseud}
Suppose\footnote{In fact, by Remark \ref{rmk-toI}, it is enough to consider the range $0\leq i\leq \lfloor n/2\rfloor$. } $I=\cbra{i}$, where $0\leq i\leq n$.

\begin{prop}\label{prop-i0n}
	Suppose $i=0$ or $n$. Then $\RM^\naive_i$ is isomorphic to $\OGr(n,2n)$ over $\CO$. In particular, $\RM^\naive_i$ is $\CO$-smooth, and hence is flat over $\CO$. 
\end{prop}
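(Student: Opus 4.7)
My plan is to unravel Definition \ref{defn-naive} for the singleton index set $I=\{i\}$ with $i\in\{0,n\}$, and to observe that the periodicity and self-duality conditions collapse the moduli problem to the orthogonal Grassmannian of a single lattice equipped with a perfect split symmetric form over $\CO$.

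First I would dispose of $i=0$. The chain $\Lambda_{\{0\}}=\{\pi^{-d}\Lambda_0\}_{d\in\BZ}$ consists, up to the periodicity condition LM3, of the single self-dual lattice $\Lambda_0$ (so $\Lambda_0^\psi=\Lambda_0$). Thus an $R$-point of $\RM^\naive_0$ is encoded by a single rank-$n$ direct summand $\CF\subset\Lambda_0\otimes_\CO R$, and LM2 asserts exactly that $\CF$ is totally isotropic for the perfect split symmetric form $\psi$ on $\Lambda_0$. This is precisely the moduli functor represented by $\OGr(n,2n)$ over $\CO$.

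Next I would handle $i=n$, which is morally identical once one rescales by $\pi$. Here $\Lambda_n^\psi=\Lambda_{-n}=\pi\Lambda_n$, so after LM3 the data reduces to a single rank-$n$ direct summand $\CF\subset\Lambda_n\otimes_\CO R$, with $\CF_{\Lambda_{-n}}$ forced to be $\pi\CF$ via the isomorphism $\Lambda_n\xrightarrow{\pi}\pi\Lambda_n=\Lambda_{-n}$ from LM3. Condition LM2 for $\Lambda=\Lambda_n$ then reads $(\pi\psi)(\CF,\CF)=0$, and the equality $\Lambda_n^\psi=\pi\Lambda_n$ is precisely the statement that $\pi\psi$ is a perfect symmetric form on $\Lambda_n$; a direct check on the basis $\pi^{-1}e_1,\ldots,\pi^{-1}e_n,e_{n+1},\ldots,e_{2n}$ of $\Lambda_n$ shows that this form is split. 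Hence $\RM^\naive_n\simeq\OGr(n,2n)$ over $\CO$ as well.

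The final step is to invoke the standard fact that $\OGr(n,2n)$ is smooth over $\CO$ of relative dimension $n(n-1)/2$, whence $\CO$-flatness follows. I do not anticipate any substantive obstacle here: the only point that demands a little care is the $\pi$-rescaling bookkeeping in the $i=n$ case, which is entirely routine.
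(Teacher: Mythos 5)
Your proposal is correct and follows essentially the same approach as the paper: both arguments collapse the moduli problem to rank-$n$ totally isotropic direct summands of a single lattice carrying a perfect split symmetric $\CO$-form, hence to $\OGr(n,2n)$. The paper is terser because it invokes the reformulation LM$1'$--LM$3'$ (via the identifications $\Lambda_j\simeq\CO^{2n}$ in \eqref{fi}), which already normalizes away the $\pi$-rescaling you carry out by hand in the $i=n$ case; your explicit verification that $\pi\psi$ is perfect and split on $\Lambda_n$ is the same bookkeeping made visible.
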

\begin{proof}
	From the discussion after Definition \ref{defn-permi}, for any $\CO$-algebra $R$, $\RM^\naive_i(R)$ is the set of locally direct summands $\CF\sset R^{2n}$ of rank $n$ satisfying $\CF=\CF^\perp$. Hence, we obtain $\RM^\naive_i\simeq \OGr(n,2n)$ over $\CO$.
\end{proof}

From now on, we assume that $i\neq 0,n$. 

For $I=\cbra{i}$, giving an $I$-face (or simply face) is equivalent to giving a pair $(v_i,v_{-i})$ of vectors in $\BZ^{2n}$ such that \begin{enumerate}
        \item $v_{-i}\geq v_i\geq v_{-i}-\mathbf 1 $; 
        \item $\Sigma v_i=\Sigma v_{-i}-2i$;
        \item there exists $d\in\BZ$ such that $v_i+v_{-i}^*=\mathbf d$.  
    \end{enumerate}  

\begin{lemma} \label{lem-Wmfaces}
   \begin{enumerate}
    \item The group $W'$ (see Definition \ref{defnWprime}) acts transitively on $\cbra{\text{$I$-faces}}$.
   	\item The stabilizer subgroup in $W'$ of $(\omega_i,\omega_{-i})$ is $W_i$ (see Lemma \ref{lem-cell}).
   	\item The map $w\mapsto w(\omega_i,\omega_{-i})$ induces a bijection \begin{flalign*} 
		 W'/W_i \simto \cbra{\text{$I$-faces}}.
	\end{flalign*} 
   \end{enumerate}
\end{lemma}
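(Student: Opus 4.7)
The plan is to establish the three parts in sequence, using the identity $a_i=\tfrac{1}{2}(\omega_i+\omega_{-i})$ in $\BR^{2n}$ as the geometric link between $I$-faces and the vertex $a_i$ analyzed in Lemma \ref{lem-Wmperm}. First I verify that the $\wt{W}$-action on $\BR^{2n}$ from \eqref{actionW}, applied componentwise, preserves the set of $I$-faces: conditions (1)--(3) of Definition \ref{defn-face} follow because $w_0$ is a permutation, while condition (4) uses both $w_0\in S_{2n}^*$ and the constraint $t^w(k)+t^w(k^*)=\mathrm{const}$ that defines $X_*(T)\subset\BZ^{2n}$.

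For Part (2), any $w\in W'$ fixing $(\omega_i,\omega_{-i})$ also fixes $a_i$ by affineness of the action. Conversely, for $w=t^ww_0\in W'$ with $wa_i=a_i$, the equation $t^w=a_i-w_0a_i$ must produce an integer vector, which forces $w_0$ to preserve each of $A_i$ and $B_i$. A parity computation parallel to the one in the proof of Lemma \ref{lem-Wmperm} gives $\varepsilon(w)\equiv -r(w_0|_{A_i})\bmod 2$, so the hypothesis $\varepsilon(w)=0$ combined with $w_0\in S_{2n}^\circ$ forces both $w_0|_{A_i}$ and $w_0|_{B_i}$ to be even, placing $w$ in $W_i$. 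In the other direction, for $w\in W_i$ the vector $\omega_i-a_i=-\tfrac{1}{2}(1^{(i)},0^{(2n-2i)},1^{(i)})$ is constant on $A_i$ and zero on $B_i$, hence fixed by $w_0$; a short computation yields $w\omega_i=\omega_i$ and similarly $w\omega_{-i}=\omega_{-i}$. This identifies $\mathrm{Stab}_{W'}(\omega_i,\omega_{-i})=W_i$.

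For Part (1), given a face $(v_i,v_{-i})$, set $E\coloneqq \cbra{k\in[1,2n]\ |\ v_{-i}(k)-v_i(k)=1}$. Conditions (2)--(4) of Definition \ref{defn-face} force $|E|=2i$ and $E=E^*$. Choose $w_0\in S_{2n}^*$ with $w_0(A_i)=E$ and set $t^w\coloneqq v_i-w_0\omega_i$; condition (4) together with the identity $\omega_i(k)+\omega_i(k^*)=-1$ on $A_i$ and $0$ on $B_i$ then implies $t^w\in X_*(T)$, and a direct check shows $w\coloneqq t^ww_0$ sends $(\omega_i,\omega_{-i})$ to $(v_i,v_{-i})$. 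It remains to arrange $w\in W'$. First, since $1\leq i\leq n-1$, I may precompose $w_0$ with a transposition $(jj^*)$ for some $j\in[1,i]$ (which preserves $E$ because $E=E^*$) to force $w_0\in S_{2n}^\circ$. Second, to arrange $\varepsilon(w)=0$, I right-multiply $w$ by an explicit element $s\in\mathrm{Stab}_{\wt{W}^\circ}(\omega_i,\omega_{-i})\setminus W'$: take $w_0^s=(jj^*)(ll^*)$ with $j\in[1,i]$ and $l\in[i+1,n]$, together with the forced translation $t^s=\omega_i-w_0^s\omega_i$. A direct check shows $s\in\wt{W}^\circ$ fixes $(\omega_i,\omega_{-i})$ with $\varepsilon(s)=1$, so replacing $w$ by $ws$ if necessary yields $w\in W'$. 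Part (3) is then immediate from (1) and (2) by orbit--stabilizer.

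The main obstacle is the parity bookkeeping in Part (1): exhibiting the element $s$ and verifying that it represents a nontrivial class in $\mathrm{Stab}_{\wt{W}^\circ}(\omega_i,\omega_{-i})/W_i$. This amounts to showing that the two $\BZ/2\BZ$-valued invariants $r(w_0|_{A_i})+r(w_0|_{B_i})$ (from $w_0\in S_{2n}^\circ$) and $r(w_0|_{A_i})$ (from $\varepsilon(w)$) are genuinely independent on the stabilizer, which relies precisely on having at least one $\tau_j$-type transposition available in each of $A_i$ and $B_i$ --- exactly the content of the assumption $1\leq i\leq n-1$ (which is our standing hypothesis since $i\neq 0,n$).
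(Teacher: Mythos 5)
Your proof is correct and follows essentially the same strategy as the paper's: Part (1) by choosing $w_0$ sending $A_i$ onto the support $E$ of $v_{-i}-v_i$, adjusting by an explicit element of $\mathrm{Stab}_{\wt{W}^\circ}(\omega_i,\omega_{-i})$ with $\varepsilon=1$ to land in $W'$; Part (2) via the observation that the midpoint of $\omega_i$ and $\omega_{-i}$ is $a_i$, reducing to the description of $W_i$ in Lemma \ref{lem-Wmperm}; Part (3) by orbit--stabilizer. (The paper uses the specific element $\tau_1=(-1,0^{(2n-2)},1)\cdot(1,2n)(n,n+1)$ where you use a general $s$ with $w_0^s=(jj^*)(ll^*)$, and your parenthetical about ``preserves $E$ because $E=E^*$'' should really say that $(jj^*)$ with $j\in[1,i]$ preserves $A_i$ under precomposition, but these are cosmetic.)
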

\begin{proof}
    (1) 	Let $(v_i,v_{-i})$ be a face. By Definition \ref{defn-face} (3), we have \begin{flalign*}
	v_{-i}-v_i = (\mathbf d-v_i^*)-v_i = \mathbf d-v_i-v_i^*.
	\end{flalign*}
	Thus, $v_{-i}-v_i=(v_{-i}-v_i)^*$. In particular, $v_{-i}-v_i$ is of the form $$(r_1,\ldots,r_{n-1},r_n,r_n,r_{n-1},\ldots,r_1)\in \BZ^{2n}.$$ By Definition \ref{defn-face} (1) and (2),  for $1\leq j\leq n$, we have $r_j\in\tcbra{0,1}$, and exactly $i$ of the $r_j$'s equal $1$. Therefore, there exists a $w_0\in S_{2n}^*$ such that \begin{flalign*}
		v_{-i}-v_i=w_0(1^{(i)},0^{(2n-2i)},1^{(i)})=w_0(\omega_{-i}-\omega_i).
	\end{flalign*}
	If $w_0$ is not in $S_{2n}^\circ$, then we replace $w_0$ by $w_0(1,2n)$. Then $w_0\in S^\circ_{2n}$ and $v_{-i}-v_i=w_0(\omega_{-i}-\omega_i)$. Set \begin{flalign*}
		  t^w\coloneqq v_i-w_0\omega_i \text{\ and\ } w\coloneqq t^ww_0.
	\end{flalign*}
	For $1\leq j\leq 2n$, we have \begin{flalign*}
		  t^w(j)+t^w(j^*) &=v_i(j)-(w_0\omega_i)(j) + v_i(j^*)-(w_0\omega_i)(j^*)\\ &= v_i(j)+d-v_{-i}(j) -(w_0\omega_i)(j)-(w_0\omega_i)(j^*)\\ &=d- (v_{-i}-v_{i})(j)+w_0(\omega_{-i}-\omega_i)(j) \\ &=d.
	\end{flalign*}
	If $\varepsilon(t^w)=0$, then $w\in W'$, we are done. Suppose $\varepsilon(t^w)=1$. Since $i\neq 0,n$, the element (cf. Proposition \ref{prop-paraIndex}) \begin{flalign*}
		   \tau_1\coloneqq (-1,0^{(2n-2)},1)\cdot (1,2n)(n,n+1)\in \wt{W}^\circ
	\end{flalign*} 
	satisfies $\tau_1(\omega_i,\omega_{-i})=(\omega_i,\omega_{-i})$ and $\varepsilon(\tau_1)=1$. Hence, $\varepsilon(w\tau_1)=0$, and so $w\tau_1\in W'$. Furthermore, we have $w\tau_1(\omega_i,\omega_{-i})=(\omega_i,\omega_{-i})$. This shows that $W'$ acts transitively on the set of $\cbra{i}$-faces. 

    (2)
    Denote $H=\tcbra{w\in W' \ |\ w(\omega_i,\omega_{-i})=(\omega_i,\omega_{-i}) }.$ 
	Note that $a_i$ is the midpoint of $\omega_i$ and $\omega_{-i}$. Since $W'$ acts via affine transformations, we obtain that $H$ stabilizes $a_i$. Let $w=t^ww_0\in H$. Then \begin{flalign}
		  wa_i=t^w+w_0a_i =a_i.  \label{=ai}
	\end{flalign}
	In particular, $t^w=a_i-w_0a_i\in \BZ^{2n}$. As in the proof of Lemma \ref{lem-Wmperm}, it follows that the permutation $w_0$ permutes both $A_i$ and $B_i$. Then it is straightforward to check that $t^w+(t^w)^*=\mathbf 0$. Since $w\in H$, we have $w\in W_\aff$, and hence \begin{flalign*}
		H=\cbra{w\in W_\aff\ |\ w(\omega_i,\omega_{-i})=(\omega_i,\omega_{-i})}.
	\end{flalign*}
	As $H$ fixes $a_i$, we have 
	$H\sset W_i$. Conversely, suppose $w=t^ww_0\in W_i$, i.e., $wa_i=a_i$. By \eqref{=ai}, we have $t^w=a_i-w_0a_i$. It follows that \begin{flalign*}
		 w\omega_i &= t^w+w_0\omega_i\\ &= a_i-w_0a_i +w_0\omega_i\\ &= ((-1/2)^{(i)},0^{(2n-2i)},(1/2)^{(i)}) +w_0((-1/2)^{(i)},0^{(2n-2i)},(-1/2)^{(i)}).
	\end{flalign*}
	Since $w_0\in S_{2i}^\circ\times S_{2n-2i}^\circ$ by Lemma \ref{lem-Wmperm}, we have \begin{flalign*}
		w\omega_i = ((-1/2)^{(i)},0^{(2n-2i)},(1/2)^{(i)}) +((-1/2)^{(i)},0^{(2n-2i)},(-1/2)^{(i)}) =\omega_i.
	\end{flalign*}
	Similarly, we obtain that $w\omega_{-i}=\omega_{-i}$. Hence, $H=W_i$. 
	
	(3) It immediately follows from (1) and (2).   
\end{proof}

\begin{remark} \label{rmk-Wm}
Lemma \ref{lem-Wmfaces} is essentially contained in \cite[Lemma 8.13.6]{smithling2014topological}. We give a direct proof here for the reader's convenience.
\end{remark}

For $I=\cbra{i}$, a $k$-point in $\RM^\naive_I(k)$ amounts to a pair $(\CF_i,\CF_{-i})$ of subspaces in $k^{2n}$ satisfying conditions LM$1'$ to LM$3'$ (see the paragraph after Definition \ref{defn-permi}). 
\begin{defn}
    Let $E$ be a subset of $\sbra{1,2n}$ with cardinality $n$. \begin{enumerate}
        \item We say that a subspace $\CF\sset k^{2n}$ is \dfn{given by} $E$ if $\CF$ has a $k$-basis consisting of $\varepsilon_j$ for $j\in E$.
        \item We say that $E$ is \dfn{naively-permissible} if, for every pair $\cbra{j,j^*}\sset A_i$ (notation as in Lemma \ref{lem-Wmperm}), the pair is not contained in $E$, and for every pair $\cbra{j,j^*}\sset B_i$, at least one element lies in $E$.
    \end{enumerate} 
\end{defn}

Let $w\in \wt{W}^\circ$ be naively-permissible\footnote{If $I=\cbra{i}$, then $I$ is necessarily of type \RNum{2} or \RNum{3}. We may assume that $w\in \wt{W}^\circ$ by Proposition \ref{lem-cell}. }. 
By Lemma \ref{lem-perm}, the subspace $\CF^w_i$ (resp. $\CF^w_{-i}$) is given by $$E^w\coloneqq E^w_i=\cbra{j\ |\ \mu_i^w(j)=0\text{\ and\ } 1\leq j\leq 2n} $$ (resp. $(E^w)^\perp$). Here, $(E^w)^\perp$ denotes the complement of the subset $\cbra{j^*\ |\ j\in E^w}$.
 
 Clearly, for $u\in W_i$, we have $$E^w=E^{wu}.$$ 

\begin{lem}\label{lem-Eperm}
    Let $w\in\wt{W}^\circ$ be naively-permissible. Then $E^w$ is naively-permissible.
\end{lem}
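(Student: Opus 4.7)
The plan is a direct combinatorial verification using the decomposition $w=t^ww_0$ with $t^w\in X_*(T)$ and $w_0\in S_{2n}^\circ$, and will rest on the uniform identity
\[
(w\omega_i)(j)+(w\omega_i)(j^*) = \omega_i(w_0^{-1}j)+\omega_i^*(w_0^{-1}j)+c,
\]
where $c\coloneqq t^w(j)+t^w(j^*)$ is independent of $j$ (since $t^w\in X_*(T)$) and the equality uses that $w_0\in S_{2n}^*$ commutes with the involution $*$.

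First, I will pin down $c$. Pairing up coordinates and summing gives $\Sigma t^w=nc$. On the other hand, condition (P2) from Lemma \ref{permp1p2} yields $\Sigma(w\omega_i)=n-i$, and since $w_0$ is a permutation $\Sigma(w_0\omega_i)=\Sigma\omega_i=-i$. Hence $\Sigma t^w=n$ and $c=1$. A direct inspection of $\omega_i=((-1)^{(i)},0^{(2n-i)})$ shows that $\omega_i+\omega_i^*$ takes the value $-1$ on coordinates in $A_i=[1,i]\cup[i^*,2n]$ and $0$ on coordinates in $B_i=[i+1,i^*-1]$. Substituting into the identity above, I obtain that for every $j$,
\[
(w\omega_i)(j)+(w\omega_i)(j^*)\in\{0,1\}.
\]

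Now I check the two naive-permissibility conditions for $E^w=\{j:\mu_i^w(j)=0\}$. For (i), suppose a pair $\{j,j^*\}\subset A_i$ is contained in $E^w$; WLOG $j\in[1,i]$, so $\omega_i(j)=-1$ and $\omega_i(j^*)=0$. Then $(w\omega_i)(j)=\omega_i(j)=-1$ and $(w\omega_i)(j^*)=\omega_i(j^*)=0$, whose sum $-1$ violates the range $\{0,1\}$ above. For (ii), suppose a pair $\{j,j^*\}\subset B_i$ is disjoint from $E^w$; then $\mu_i^w(j)=\mu_i^w(j^*)=1$, and since $\omega_i$ vanishes on $B_i$ we get $(w\omega_i)(j)=(w\omega_i)(j^*)=1$, whose sum $2$ again violates the range. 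Both contradictions finish the proof.

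There is no substantive obstacle: the argument is essentially a single linear-algebra computation once one exploits that $w_0$ preserves the pairing $*$. The only subtle point is the numerology that yields $c=1$ (tying the translation part of $w$ to the cocharacter $\mu_\pm$); this makes the inequalities on both halves $A_i$ and $B_i$ sharp simultaneously, which is precisely why both conditions of naive permissibility can be read off from the same identity.
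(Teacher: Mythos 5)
Your proof is correct, and the underlying identity it establishes---$v_i(j)+v_i(j^*)\in\{0,1\}$ for every $j$, where $v_i\coloneqq w\omega_i$---is exactly the inequality the paper uses, as is the final case analysis on $A_i$ and $B_i$. Where you diverge is in how that identity is produced. The paper reaches it by combining the face inequality $v_{-i}\geq v_i\geq v_{-i}-\mathbf 1$ with Lemma \ref{lem-perm}(1) ($v_i+v_{-i}^*=\mathbf 1$), the latter itself being proved by using (P1) to bound the constant $d\coloneqq v_i(j)+v_{-i}(j^*)$ in $\{0,1,2\}$ and (P2) to rule out the endpoints. You bypass $v_{-i}$ entirely: unwinding $w=t^ww_0$, you observe that $t^w(j)+t^w(j^*)$ is a constant $c$ (because $t^w\in X_*(T)$ and $w_0\in S_{2n}^*$ commutes with $*$), and a one-line summation via (P2) alone pins down $c=1$; the range $\{-1,0\}$ of $\omega_i+\omega_i^*$ then gives the bound. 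This is slightly leaner, using (P1) only in the last step and never invoking the dual index $-i$. The paper's route costs a little more locally but routes through a statement, $v_i+v_{-i}^*=\mathbf 1$, that is reused in Lemma \ref{lem-perm}(2), so nothing is actually wasted there. Both arguments are short and the substance coincides.
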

\begin{proof}
    Recall $v_{\pm i}\coloneqq w\omega_{\pm i}$. Note that $(v_i,v_{-i})$ is a face. By Definition \ref{defn-face} and Lemma \ref{lem-perm} (1), we have \begin{flalign*}
        1-v_i(j^*)\geq v_i(j)\geq -v_i(j^*)\ (1\leq j\leq 2n) \text{\ and\ }\Sigma v_i=n-i.
    \end{flalign*}
    It follows that for $j\in A_i$, we have \begin{equation}
       \begin{split}
           \mu_i^w(j)+\mu_i^w(j^*) &= v_i(j)+v_i(j^*)-\omega_i(j)-\omega_i(j^*)\\ &=v_i(j)+v_i(j^*)+1\in \cbra{1,2}. \label{mu3}
       \end{split}
    \end{equation}
    Similarly, for $j\in B_i$, we have \begin{flalign}
        \mu_i^w(j)+\mu_i^w(j^*)\in \cbra{0,1}.  \label{mu4}
    \end{flalign}  As $w$ is naively-permissible, we have $\mu_i^w(j)\in\cbra{0,1}$ for all $j$ by condition (P1). Hence, the formula in \eqref{mu3} implies that no pairs $\cbra{j,j^*}\sset A_i$ are contained in $E^w$, and the formula \eqref{mu4} implies that at least one element in $\cbra{j,j^*}\sset B_i$ lies in $E^w$. It proves that $E^w$ is naively-permissible. 
\end{proof}

\begin{lem}\label{lem-orbits4}
    There are precisely $\min{\cbra{i,n-i}}+4$ orbits of naively-permissible subsets under the action of $W_i=S_{2i}^\circ\times S_{2n-2i}^\circ$.
\end{lem}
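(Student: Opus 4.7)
The plan is to parametrize the naively-permissible subsets $E$ by $a\coloneqq |E\cap A_i|$ and count the $W_i$-orbits for each fixed value of $a$.

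First, I would determine the range of $a$. The no-pair condition on $A_i$ gives $a\leq i$; the cover-every-pair condition on $B_i$ combined with $|E|=n$ also yields $a\leq i$; and the bound $|E\cap B_i|\leq 2(n-i)$ gives $a\geq 2i-n$. Hence $a\in [\max\cbra{0,2i-n},i]$, a set of cardinality $\min\cbra{i,n-i}+1$. Since $W_i$ preserves the partition $[1,2n]=A_i\sqcup B_i$, the action splits into independent actions of $S_{2i}^\circ$ on $E\cap A_i$ and $S_{2(n-i)}^\circ$ on $E\cap B_i$; it suffices to count the orbits in each factor.

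Next, I would analyze the $S_{2i}^\circ$-orbits on size-$a$ naively-permissible subsets $E_A\sset A_i$. Viewing $S_{2i}^*\simeq S_i\ltimes (\BZ/2\BZ)^i$ as the hyperoctahedral group on the $i$ pairs $\cbra{j,j^*}\sset A_i$, a direct sign calculation on generators (pure pair permutations are products of two $k$-cycles, hence even; a single pair flip is a transposition, hence odd) shows that the parity in $S_{2i}$ equals $(-1)^{s(w)}$, where $s(w)$ denotes the number of pairs flipped by $w$; thus $S_{2i}^\circ$ is the kernel of the flip-parity homomorphism. The transitivity of $S_{2i}^*$ on size-$a$ no-pair subsets is elementary, and the stabilizer of a fixed $E_A$ is $S_a\times (S_{i-a}\ltimes (\BZ/2\BZ)^{i-a})$: permute chosen pairs without flips, and permute-and-flip unchosen pairs freely. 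By the standard index-$2$ criterion, the $S_{2i}^\circ$-orbit of $E_A$ coincides with the $S_{2i}^*$-orbit iff the stabilizer contains an odd element, iff an unchosen pair is available to be flipped, iff $a<i$. So there is $1$ orbit when $a<i$ and $2$ orbits when $a=i$. The analysis for $S_{2(n-i)}^\circ$ on $E\cap B_i$ is symmetric: passing to the complement $B_i\setminus (E\cap B_i)$, which has size $n-2i+a$ and contains no pair by naive-permissibility, reduces to the same analysis with $(i,a)$ replaced by $(n-i,n-2i+a)$. Since $a=i$ corresponds to $n-2i+a=n-i$, this yields $1$ orbit when $a<i$ and $2$ orbits when $a=i$.

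Multiplying and summing: each of the $\min\cbra{i,n-i}$ values $a<i$ contributes $1\cdot 1=1$ orbit, and $a=i$ contributes $2\cdot 2=4$ orbits, for a total of $\min\cbra{i,n-i}+4$. The main obstacle will be the parity characterization of $S_{2i}^\circ\sset S_{2i}^*$ together with the stabilizer computation; once these are in place, the $1$-versus-$2$ orbit dichotomy as a function of $a$ follows cleanly from the index-$2$ orbit criterion, and the final count is a short sum.
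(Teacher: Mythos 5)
Your proof is correct and follows essentially the same strategy as the paper: stratify the naively-permissible subsets by $a=|E\cap A_i|$ (the paper's type $\ell=a_1$), determine the range $\max\{0,2i-n\}\leq a\leq i$, and count orbits within each stratum. Where the paper argues the $4$-versus-$1$ dichotomy by tracking the explicit parity invariants $r_1,r_2$ and manipulating the generators $\tau_{j'j},\tau_j$, you reach the same conclusion more systematically by exploiting the direct-product structure $W_i=S_{2i}^\circ\times S_{2(n-i)}^\circ$ from Lemma~\ref{lem-Wmperm}, identifying $S_{2i}^\circ$ as the kernel of the flip-parity homomorphism, computing the $S_{2i}^*$-stabilizer $S_a\times(S_{i-a}\ltimes(\BZ/2\BZ)^{i-a})$, and applying the index-$2$ orbit-splitting criterion, with the $B_i$-side reduced to the $A_i$-analysis by passing to complements. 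This is a cleaner packaging of the same argument rather than a genuinely different route.
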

\begin{proof}
    Let $E$ be a naively-permissible subset. Let $a_1$ (resp. $a_0$) denote the number of pairs in $A_i$ contributing one (resp. zero) element in $E$. Let $b_2$ (resp. $b_1$) denote the number of pairs in $B_i$ contributing two (resp. one) elements in $E$. Since $E$ is naively-permissible, we have \begin{flalign*}
        \max{\cbra{0,2i-n}}\leq a_1\leq i,\ a_0=i-a_1,\ b_1=n-2i+a_1,\ b_2=i-a_1.
    \end{flalign*}
    Note that the lower bound for $a_1$ is required to guarantee $b_1\geq 0$.
    
    Let $\ell$ be an integer satisfying $\max{\cbra{0,2i-n}}\leq \ell\leq i$. We say $E$ is of type $\ell$ if $a_1(E)=\ell$. It is clear that for $h\in W_i$, the (naively-permissible) subsets $h(E)$ and $E$ have the same type. In the proof of Lemma \ref{lem-Wmperm}, we see that $h\in W_i$ decomposes into a (reduced) product of $\tau_{j'j}=(j'j)(j'^*j^*)$ and $\tau_j=(jj^*)$, with an even number of $\tau_j$. If $\cbra{j',j'^*}\cap E\neq \emptyset$, then $\cbra{j,j^*}\cap (\tau_{j'j}E)\neq \emptyset$. If $j\in E$, then $j^*\in (\tau_jE)$. 
    
    Suppose $E$ is of type $\ell=i$. Then $E$ picks exactly one element from each pair $\cbra{j,j^*}$ in $A_i\cup B_i$. Denote \begin{flalign*}
        r_1(E)\coloneqq \# (E\cap\cbra{1,\ldots,i}) \text{\ and\ } r_2(E)\coloneqq \# (E\cap\cbra{i+1,\ldots,n}).
    \end{flalign*}
    Let $\CS$ be the set of naively-permissible subsets. By the action of $\tau_{j'j}$ and $\tau_j$ on $E$, we obtain that  \begin{flalign}
        \cbra{E\in\CS\ |\ a_1(E)=i, (-1)^{r_1(E)}=\pm 1, (-1)^{r_2(E)}=\pm 1 }  \label{ESm}
    \end{flalign}
    gives four $W_i$-orbits in $\CS$. 

    Suppose $\max{\cbra{0,2i-n}}\leq \ell<i$. Let $E,E'$ be two naively-permissible subsets of type $\ell$. Using the action of $\tau_{j'j}$, we may assume that the pairs $\cbra{j,j^*}$ contributing one (resp. two) element(s) in $E$ and $E'$ are the same. Since $\ell<i$, there exists a pair $\cbra{j_0,j_0^*}\sset A_i$ and a pair $\cbra{j'_0,j_0'^*}\sset B_i$ such that $\cbra{j_0,j_0^*}\cap E=\emptyset$ and $\cbra{j_0',j_0'^*}\sset E$. Then the transpositions $(j_0j_0^*)$ and $(j_0'j_0'^*)$ act trivially on $E$. Using the action of $(j_0j_0^*)(jj^*)$ ($j\in A_i$) and $(j_0'j_0'^*)(jj^*)$ ($j\in B_i$), we may assume that $E=E'$. Hence, the set \begin{flalign}
    	\cbra{E\in\CS\ |\ a_1(E)=\ell}  \label{ESl}
    \end{flalign} is a $W_i$-orbit.
    
    From the above discussion, we conclude that there are precisely $\min{\cbra{i,n-i}}+4$ $W_i$-orbits in $\CS$.  
\end{proof}

For $\ell=i$, define \begin{flalign*}
	E^i_1&\coloneqq \sbra{1,n},\ E_2^i \coloneqq [1,i-1]\cup [i+1,n-1]\cup [n+1,2n+1-i], 
	\\ E^i_3&\coloneqq \sbra{1,n-1}\cup \cbra{n+1},\
	E_4^i\coloneqq \sbra{1,i-1}\cup [i+1,n]\cup\cbra{2n+1-i}.
\end{flalign*}
By the proof of Lemma \ref{lem-orbits4}, $E^i_d$ for $1\leq d\leq 4$ gives a representative for each $W_i$-orbit in \eqref{ESm}. Similarly, for $\max{\cbra{0,2i-n}}\leq \ell<i$, the set \begin{flalign*}
	E_1^\ell\coloneqq \sbra{i+1-\ell,n+i-\ell}
\end{flalign*}
is a representative for the $W_i$-orbit in \eqref{ESl}. 

For a subset $E\sset \sbra{1,2n}$, define $$kE\coloneqq k\pair{\varepsilon_j\ |\ j\in E}\sset k^{2n}. $$
The above naively-permissible subsets $E^\ell_d$ determine $\min{\cbra{i,n-i}}+4$ points $(\CF^\ell_{i,d},\CF^\ell_{-i,d})$ in $\RM^\naive_i(k)$, via the following subspaces of $k^{2n}$:
\begin{equation}
	\begin{split}
		\CF_{i,d}^\ell\coloneqq kE_d^\ell \text{\ and\ } \CF_{-i,d}^\ell\coloneqq k(E_d^\ell)^\perp.
	\end{split}  \label{CF1234}
\end{equation} 
Here, $\max{\cbra{0,2i-n}}\leq \ell\leq i$, with $1\leq d\leq 4$ when $\ell=i$, and $d=1$ when $\ell<i$.

Using Proposition \ref{prop-pmperm} (ii) and \cite[\S 7.1.4]{pappas2009local}, the following lemma is straightforward. 

\begin{lem}\label{lem-cells1}
    If $\ell=i$, then we have $(\CF^i_{i,d},\CF^i_{-i,d})\in\RM^+_i(i)$ for $d=1,2$, and $(\CF^i_{i,d},\CF^i_{-i,d})\in \RM^-_i(i)$ for $d=3,4$. If $\max\cbra{0,2i-n}\leq \ell<i$, then $(\CF^\ell_{i,1},\CF^\ell_{-i,1})\in \RM^+_i(\ell)\cap \RM^-_i(\ell)$.
\end{lem}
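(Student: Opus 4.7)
The plan is to verify three things for each of the five representative pairs $(\CF^\ell_{i,d}, \CF^\ell_{-i,d})$ from \eqref{CF1234}: (a) the pair defines a $k$-point of $\RM^\naive_i$; (b) $\iota(\CF^\ell_{i,d})$ has rank $\ell$, so the point lies in the stratum indexed by $\ell$; and (c) the spin $\pm$-condition holds with the correct sign. Only (c) carries the essential content.

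I will dispatch (a) and (b) by short combinatorial observations. For (a), each $E^\ell_d$ has cardinality $n$ and is naively permissible (no pair $\{k,k^*\}\sset A_i$ is entirely contained in $E^\ell_d$, while at least one element of each pair $\{k,k^*\}\sset B_i$ lies in $E^\ell_d$); combined with the definition $\CF^\ell_{-i,d} = k(E^\ell_d)^\perp$, this yields axioms LM$1'$--LM$3'$. For (b), under the identification \eqref{fi}, the reduction $\iota\otimes k$ fixes $\varepsilon_j$ for $j\in A_i$ and kills $\varepsilon_j$ for $j\in B_i$, so $\iota(\CF^\ell_{i,d}) = k(E^\ell_d\cap A_i)$, and a direct count yields $|E^\ell_d \cap A_i|=\ell$ in every case.

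For (c) when $\ell=i$, I will first observe that each $E^i_d$ selects exactly one element from every pair $\{k,k^*\}$, so $\CF^i_{i,d}$ is totally isotropic in $k^{2n}$ with respect to the standard split pairing and $(E^i_d)^\perp = E^i_d$; in particular $\CF^i_{-i,d} = \CF^i_{i,d}$, so the spin condition at $\Lambda_{-i}$ needs no separate check. By Proposition \ref{prop-pmperm}(ii), the spin $\pm$-condition at $\Lambda_i$ then becomes the question of whether $\dim_k(\CF^i_{i,d}\cap N_\pm)\equiv n\pmod 2$, i.e., whether $|E^i_d\cap[1,n]|\equiv n\pmod 2$. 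A direct count of this intersection will separate $E^i_1, E^i_2$ (specifying the $+$-component) from $E^i_3, E^i_4$ (specifying the $-$-component).

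For (c) when $\max\{0,2i-n\}\leq \ell<i$, I will show that $E^\ell_1 = [i+1-\ell,n+i-\ell]$ is not totally isotropic: since $a_0 = i-\ell>0$, some pair $\{k,k^*\}\sset A_i$ contributes no element to $E^\ell_1$. By the identity $\mu_{-i}(j) = 1-\mu_i(j^*)$ extracted from Lemma \ref{lem-perm}(1), the vector $\mu_{-i}$ also fails to be totally isotropic, so the hypothesis of Proposition \ref{prop-pmperm}(ii) is vacuous at both $\Lambda_i$ and $\Lambda_{-i}$. Hence the point simultaneously satisfies the $+$- and the $-$-spin conditions and belongs to $\RM^+_i(\ell)\cap\RM^-_i(\ell)$. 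The main bookkeeping hurdle will be the correct assignment of $+$ versus $-$ in the top stratum, which hinges entirely on the parity count $|E^i_d\cap[1,n]|\bmod 2$.
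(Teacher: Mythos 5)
Your proposal is correct and fills in exactly the verification that the paper dismisses as "straightforward" from Proposition \ref{prop-pmperm}(ii) together with the cited \cite[\S 7.1.4]{pappas2009local} (which supplies the standard parity criterion $\dim(U\cap W)\equiv n \pmod 2$ for two maximal isotropic subspaces to lie on the same component of $\OGr(n,2n)$). Your reduction of part (b) via the coordinate description of $\iota\otimes k$ (fixing $\varepsilon_j$ for $j\in A_i$, killing $\varepsilon_j$ for $j\in B_i$), your parity counts of $\lvert E^i_d\cap[1,n]\rvert$ for the top stratum, and your observation that $\mu^w_{\pm i}$ are simultaneously non-totally-isotropic (hence the spin condition is vacuous) for $\ell<i$ all check out and match the intended argument.
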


\begin{corollary}\label{coro-bijection}
    The Schubert cells in $\RM^\naive_{i,k}$ are in bijection with the orbits of naively-permissible subsets of $\sbra{1,2n}$ under the action of $W_i\simeq S_{2i}^\circ\times S_{2n-2i}^\circ$. Each Schubert cell in $\RM^\naive_{i,k}$ admits a representative of the form $(\CF^\ell_{i,d},\CF^\ell_{-i,d})$ in \eqref{CF1234}.  In particular, there are precisely $\min{\cbra{i,n-i}}+4$ Schubert cells in $\RM^\naive_{i,k}$. 
\end{corollary}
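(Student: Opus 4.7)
The plan is to combine Corollary \ref{coro-inWcirc}, Lemma \ref{lem-Wmfaces}, Lemma \ref{lem-perm}, Lemma \ref{lem-Eperm}, and Lemma \ref{lem-orbits4} into a single chain of bijections. Since $i \neq 0, n$, the index set $I = \cbra{i}$ is of type \RNum{3}, so by Proposition \ref{lem-cell} together with Corollary \ref{coro-inWcirc}, the Schubert cells of $\sFl_i$ contained in $\RM^\naive_{i,k}$ are in bijection with double cosets $W_i \backslash W'_{\mathrm{perm}}/W_i$, where $W'_{\mathrm{perm}} \sset W'$ denotes the subset of naively-permissible elements.

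The next step is to push this description through the face formulation. By Lemma \ref{lem-Wmfaces}, the orbit map $w \mapsto (w\omega_i, w\omega_{-i})$ gives a $W_i$-equivariant bijection $W'/W_i \simto \tcbra{I\text{-faces}}$, and hence descends to
$$
W_i\backslash W'/W_i \;\simto\; \tcbra{W_i\text{-orbits of }I\text{-faces}}.
$$
Under this bijection, conditions (P1) and (P2) of Lemma \ref{permp1p2} translate into the requirement that $v_i - \omega_i \in \tcbra{0,1}^{2n}$, so naively-permissible double cosets match $W_i$-orbits on faces satisfying this bound. The rule $w \mapsto E^w \coloneqq \tcbra{\,j\,|\,(w\omega_i - \omega_i)(j) = 0\,}$ from Lemma \ref{lem-perm}, combined with Lemma \ref{lem-Eperm}, then identifies the latter with naively-permissible subsets of $\sbra{1, 2n}$.

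For the reverse direction I would reconstruct a face from a naively-permissible subset $E$: set $v_i(j) \coloneqq \omega_i(j)$ for $j \in E$, $v_i(j) \coloneqq \omega_i(j) + 1$ for $j \notin E$, and $v_{-i} \coloneqq \mathbf 1 - v_i^*$. A direct verification using the two defining properties of naive-permissibility (no pair $\cbra{j, j^*} \sset A_i$ sitting entirely inside $E$, and each pair $\cbra{j, j^*} \sset B_i$ meeting $E$) yields all the axioms of Definition \ref{defn-face}. Lemma \ref{lem-Wmfaces}(1) then produces $w \in W'$ with $(w\omega_i, w\omega_{-i}) = (v_i, v_{-i})$; such a $w$ is automatically naively-permissible with $E^w = E$. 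One should also check that the two $W_i$-actions are intertwined: right translation of $w$ by $W_i$ preserves $E^w$ because $W_i$ stabilizes $(\omega_i, \omega_{-i})$ by Lemma \ref{lem-Wmfaces}(2), while left translation by $u = t^u u_0 \in W_i$ acts on $E^w$ via the permutation $u_0 \in S_{2i}^\circ \times S_{2n-2i}^\circ$ of $\sbra{1, 2n}$; this follows from a short computation using the identity $t^u = \omega_i - u_0 \omega_i$ provided by Lemma \ref{lem-Wmperm}.

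The count of Schubert cells then follows immediately from Lemma \ref{lem-orbits4}. The explicit representatives $E_d^\ell$ chosen in its proof correspond under the above bijection to the points $(\CF^\ell_{i,d}, \CF^\ell_{-i,d})$ of \eqref{CF1234}, which therefore realize the asserted representatives. I do not anticipate any serious obstacle: the only delicate point is the bookkeeping of how the left $W_i$-action on $W'$ and the $S_{2n}^\circ$-action on subsets of $\sbra{1, 2n}$ are intertwined under $w \mapsto E^w$, and this reduces to the computation indicated above.
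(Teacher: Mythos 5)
Your proof is correct but structured differently from the paper's. The paper's argument is a counting sandwich: after establishing the $W_i$-equivariance of $w\mapsto E^w$, it uses Lemma~\ref{lem-Wmfaces} to show the induced map $W_i\backslash\{\text{naively-permissible elements of }W'\}/W_i\to W_i\backslash\CS$ is \emph{injective} (not a priori surjective), yielding the upper bound $\leq\min\{i,n-i\}+4$; the matching lower bound comes from the explicit points in~\eqref{CF1234}, and equality forces the injection to be a bijection. You instead construct the inverse map explicitly: given a naively-permissible $E$, you build the face $(v_i,v_{-i})$ with $v_i=\omega_i+\mathbf 1_{E^c}$ and $v_{-i}=\mathbf 1 - v_i^*$, check the face axioms using precisely the two defining properties of naive permissibility, and invoke Lemma~\ref{lem-Wmfaces}(1) to land back in $W'$. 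This reconstruction step, which the paper deliberately sidesteps, is correct (I verified the face conditions and conditions (P1)--(P2) are restored), and it buys you a conceptually tighter bijection at the cost of extra verification; the paper's route is shorter because it needs the explicit representatives anyway for the second sentence of the statement.

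Two small remarks. First, your appeal to Corollary~\ref{coro-inWcirc} is superfluous and slightly misdirected: that corollary concerns $\RM^\pm_{I,k}$, not $\RM^\naive_{I,k}$; for $I=\{i\}$ with $i\neq 0,n$ (type~III), Proposition~\ref{lem-cell} already places the Schubert cells of $\sFl_i$ in bijection with $W_i\backslash W'/W_i$, and the ones inside $\RM^\naive_{i,k}$ form a union of double cosets by $L^+\CP_i^\circ$-invariance, which is all you need. Second, when you assert that the $E_d^\ell$ ``correspond under the above bijection'' to the points of~\eqref{CF1234}, you should make explicit that the correspondence goes through Lemma~\ref{lem-perm}(2): the $w\in W'$ produced by your reconstruction satisfies $\CF^w_i = kE^w = kE_d^\ell$ and $\CF^w_{-i}=k(E_d^\ell)^\perp$, which is exactly~\eqref{CF1234}. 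Neither issue affects the validity of the argument.
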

\begin{proof}
    Let $u=t^uu_0\in W_i$. Then $u\omega_i=\omega_i$ by Lemma \ref{lem-Wmfaces}. If $w$ is naively-permissible and $j\in E^w$, then \begin{flalign*}
    	uw\omega_i(u_0(j)) &=w\omega_i(j)+t^{u}(u_0(j))\\ &=\omega_i(j)+t^u(u_0(j)) \text{\quad (since $j\in E^w$)}\\ &= \omega_i(u_0(j)) \text{\quad (since $u\omega_i=\omega_i$).}
    \end{flalign*} 
    Hence, $u_0(j)\in E^{uw}$. Then $w\mapsto E^w$ defines a $W_i$-equivariant map \begin{flalign*}
    	\tcbra{\text{Naively-permissible elements in $\wt{W}^\circ$}}/W_i \ra \CS=\cbra{\Centerstack[l]{Naively-permissible subsets\\ in $\cbra{1,\ldots,2n}$}}.
    \end{flalign*}
    By Lemma \ref{lem-Wmfaces}, this map restricts to an injection \begin{flalign*}
    	\tcbra{\text{Naively-permissible elements in $W'$}}/W_i \hookrightarrow \CS, 
    \end{flalign*}
    and hence an injection on (left) $W_i$-orbit sets \begin{flalign*}
    	W_i\backslash\tcbra{\text{Naively-permissible elements in $W'$}}/W_i \hookrightarrow W_i\backslash \CS.
    \end{flalign*}
    By Lemma \ref{lem-cell}, the Schubert cells in $\RM^\naive_{i,k}$ are in bijection with the left-hand orbit set. Thus, the number of Schubert cells in $\RM^\naive_k$ is at most $|W_i\backslash \CS|=\min{\cbra{i,n-i}}+4$. On the other hand, by \eqref{CF1234}, there are at least $\min{\cbra{i,n-i}}+4$ Schubert cells in $\RM^\naive_{i,k}$. Then we complete the proof. 
\end{proof}
\begin{remark}
	The proof of Corollary \ref{coro-bijection} also implies that for any naively-permissible subset $E$, there exists a naively-permissible $w\in \wt{W}^\circ$ such that $E=E^w$.
\end{remark}

\begin{prop}\label{prop-liftm4}
    Each of the $\min{\cbra{i,n-i}}+4$ points $(\CF^\ell_{i,d},\CF^\ell_{-i,d})\in \RM^\naive_i(k)$ has a lift to the generic fiber $\RM^\naive_{i,F}$.
\end{prop}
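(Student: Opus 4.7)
The plan is to exhibit, for each of the $\min\{i,n-i\}+4$ representatives $(\CF^\ell_{i,d},\CF^\ell_{-i,d})$, an explicit point of the generic fibre, possibly after base change to a finite extension $\CO' \supset \CO$, that specializes to the given $k$-point. Because $\RM^\naive_i$ is projective over $\CO$, producing such lifts shows each representative lies in the schematic closure of $\RM^\naive_{i,F}$, which is exactly the assertion of the proposition.

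For the four representatives at $\ell = i$, the subset $E = E^i_d$ satisfies $E \cap E^* = \emptyset$, so the $F$-subspace
\[
\mathscr{V}_E \coloneqq F\langle e_j \mid j \in E\rangle \subset V
\]
is totally isotropic and defines an $F$-point of $\OGr(n,V)_F$. A direct computation using \eqref{fi} shows that $\mathscr{V}_E \cap \Lambda_i$ is a rank-$n$ direct summand of $\Lambda_i$ whose reduction mod $\pi$ equals $\CF^i_{i,d} = kE$, and analogously for $\mathscr{V}_E \cap \Lambda_{-i}$. The four cases $d = 1,\ldots,4$ distribute between the two connected components of $\OGr(n,V)_F$ in the pattern predicted by Lemma~\ref{lem-cells1}.

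For the representatives at $\ell < i$, the set $E = E^\ell_1 = [i+1-\ell, n+i-\ell]$ has $E \cap E^* = [n+1-i+\ell, n+i-\ell] \neq \emptyset$, so $\mathscr{V}_E$ is \emph{not} totally isotropic. The lift I would construct lives over the ramified extension $F' \coloneqq F(\sqrt{\pi})$. For each dual pair $\{b,b^*\} \subset E \cap E^*$, match it with a distinct dual pair $\{a,a^*\} \subset [1,2n]\setminus (E \cup E^*)$ (both sides contain exactly $i-\ell$ dual pairs, and one arranges $a \in [1, i-\ell]$ so that $a \leq i$), and introduce the perturbed vectors $e_a + \sqrt{\pi}\,e_b$ and $e_{b^*} - \sqrt{\pi}\,e_{a^*}$. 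Together with the unmodified $e_j$ for $j \in E \setminus (E \cap E^*)$, these span a totally isotropic subspace $\mathscr{W} \subset V \otimes_F F'$; isotropy of each perturbed pair follows from
\[
\psi\bigl(e_a + \sqrt{\pi}\,e_b,\; e_{b^*} - \sqrt{\pi}\,e_{a^*}\bigr) \;=\; -\sqrt{\pi}\,\psi(e_a,e_{a^*}) + \sqrt{\pi}\,\psi(e_b,e_{b^*}) \;=\; 0,
\]
using $\psi(e_a,e_{a^*}) = \psi(e_b,e_{b^*}) = 1$, while all other relevant pairings vanish because $a \neq b$ and $\{a,a^*\} \cap (E \cup E^*) = \emptyset$.

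The principal technical step is the saturation of $\mathscr{W} \cap (\Lambda_i \otimes_\CO \CO_{F'})$. Since $a \leq i$, one has $e_a \in \pi\Lambda_i$, so the vector $e_a + \sqrt{\pi}\,e_b$ reduces to $0$ mod $\sqrt{\pi}$ and must be rescaled to $\sqrt{\pi}^{-1}(e_a + \sqrt{\pi}\,e_b) = \sqrt{\pi}\,\pi^{-1}e_a + e_b$, which lies in $\Lambda_i \otimes \CO_{F'}$ and reduces to $\varepsilon_b$; combined with the direct contribution $e_{b^*} - \sqrt{\pi}\,e_{a^*} \equiv \varepsilon_{b^*}$, this recovers both elements of each dual pair $\{b,b^*\}$ in the reduction. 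Hence $\mathscr{W} \cap (\Lambda_i \otimes \CO_{F'})$ reduces mod $\sqrt{\pi}$ to $kE^\ell_1 = \CF^\ell_{i,1}$, completing the lift. The main obstacle to generalizing beyond the one explicit pair considered for the example $n=4, i=2, \ell=1$ is the combinatorial bookkeeping for the matching of dual pairs, which is available uniformly thanks to the explicit ``central block'' structure of $E \cap E^*$.
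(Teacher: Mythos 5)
Your proposal is correct and follows essentially the same approach as the paper: for $\ell = i$ both arguments produce explicit $\CO$-points by taking the totally isotropic coordinate subspace on $E^i_d$ (yours phrased as $\mathscr{V}_E \cap \Lambda_i$, the paper's as the $\CO$-span of the corresponding $\varepsilon_j$'s, which coincide under the identification \eqref{fi}); for $\max\{0,2i-n\}\leq \ell < i$ both pass to $L = F(\sqrt{\pi})$ and build the lift by perturbing the dual pairs of the ``central block'' $E\cap E^*$ against the complementary outer pairs with $\pm\sqrt{\pi}$. Your generators $\sqrt{\pi}^{-1}(e_a+\sqrt{\pi}\,e_b)$ and $e_{b^*}-\sqrt{\pi}\,e_{a^*}$ are, up to the identification $\varepsilon_j \leftrightarrow \pi^{-1}e_j$ (for $j\leq i$) or $e_j$ (for $j>i$), precisely the paper's basis vectors $\varepsilon_b + \sqrt{\pi}\varepsilon_a$ and $\varepsilon_{b^*}-\sqrt{\pi}\varepsilon_{a^*}$; the only superficial difference is that the paper fixes one particular bijection between the $i-\ell$ inner dual pairs and the $i-\ell$ outer ones, whereas you note any bijection works, which is fine.
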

\begin{proof}
    
    Note that for any $\CO$-algebra $R$, $\RM^\naive_i(R)$ is the set of pairs $(\CF_i,\CF_{-i})$ of $R$-modules such that 
    \begin{enumerate}[label=(\roman*)]
    	\item $\CF_i,\CF_{-i}$ are locally direct summands of rank $n$ in $R^{2n}$;
    	\item $\CF_{-i}=\CF^\perp_i$;
    	\item Denote \begin{flalign*}
    		\lambda_1\coloneqq \begin{pmatrix}
    			\pi I_i & & \\ &I_{2n-2i} & \\ & &\pi I_i
    		\end{pmatrix} \text{\ and\ } \lambda_2\coloneqq \begin{pmatrix}
    			I_i & & \\ &\pi I_{2n-2i} & \\ & &I_i
    		\end{pmatrix},
    	\end{flalign*}
    	which are viewed as $R$-linear maps $R^{2n}\ra R^{2n}$. We require $\lambda_1(\CF_{-i})\sset \CF_i$ and $\lambda_2(\CF_i)\sset \CF_{-i}$. 
    \end{enumerate} 
    Suppose that $\ell=i$. Define \begin{flalign*}
        \wt{\CF}^i_{i,1}\coloneqq \CO\pair{\varepsilon_1,\ldots,\varepsilon_{n} } \text{\ and\ } \wt{\CF}^i_{-i,1}\coloneqq \CO\pair{\varepsilon_1,\ldots,\varepsilon_{n} }=\wt{\CF}^i_{i,1}
    \end{flalign*}
    Then $(\wt{\CF}^i_{i,1},\wt{\CF}^i_{-i,1})\in \RM^\naive_i(\CO)$ lifts the point $(\CF^i_{i,1},\CF^i_{-i,1})$.
    Similarly, we can lift the points $(\CF^i_{i,d},\CG^i_{-i,d})$ for $d=2,3,4$.

    Suppose that $\max{\cbra{0,2i-n}}\leq \ell<i$. Let $L=F(\sqrt{\pi})$ be a quadratic extension of $F$. Define \begin{flalign*}
        \wt{\CF}^\ell_{i,1}\coloneqq \CO_L &\langle \varepsilon_{i+1-\ell},\ldots,\varepsilon_{n-i+\ell}, \varepsilon_{n-i+\ell+1}+\sqrt{\pi}\varepsilon_{i-\ell}, \varepsilon_{n-i+\ell+2}+\sqrt{\pi}\varepsilon_{i-\ell-1},\ldots, \\ &\varepsilon_{n}+\sqrt{\pi}\varepsilon_1, \varepsilon_{n+1}-\sqrt{\pi}\varepsilon_{2n}, \varepsilon_{n+2}-\sqrt{\pi}\varepsilon_{2n-1},\ldots,\varepsilon_{n+i-\ell}-\sqrt{\pi}\varepsilon_{2n-i+\ell+1} \rangle.
    \end{flalign*}
    We have \begin{flalign*}
    	\wt{\CF}_{-i,1}^\ell = &\CO_L \langle \varepsilon_1+\sqrt{\pi}\varepsilon_{n},\varepsilon_2+\sqrt{\pi}\varepsilon_{n-1},\ldots,\varepsilon_{i-\ell}+\sqrt{\pi}\varepsilon_{n-i+\ell+1},\varepsilon_{i-\ell+1}, \varepsilon_{i-\ell+2}, \\ &\ldots,\varepsilon_{n-i+\ell}, \varepsilon_{2n-i+\ell+1}-\sqrt{\pi}\varepsilon_{n+i-\ell}, \varepsilon_{2n-i+\ell+2}-\sqrt{\pi}\varepsilon_{n+i-\ell-1}, \ldots, \varepsilon_{2n}-\sqrt{\pi}\varepsilon_{n+1}\rangle.
    \end{flalign*}
    Then it is easy to check that $(\wt{\CF}^\ell_{i,1},\wt{\CF}^\ell_{-i,1})\in \RM^\naive_i(\CO_L)$ and lifts $(\CF^\ell_{i,1},\CF^\ell_{-i,1})$.
\end{proof}

\begin{corollary}\label{coro-topoflat}
    The naive local model $\RM^\naive_i$, and hence $\RM^\pm_i$, is topologically flat. Moreover, the underlying topological space of $\RM^\naive_i$ is the union of those of $\RM^+_i$ and $\RM^-_i$. 
\end{corollary}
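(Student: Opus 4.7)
The plan is to show that the schematic closure $\RM^{\naive,\loc}_i \subset \RM^\naive_i$ of the generic fiber $\RM^\naive_{i,F}$ equals $\RM^\naive_i$ as a topological space; this is precisely topological flatness of $\RM^\naive_i$. Equivalently, the special fiber of $\RM^{\naive,\loc}_i$ must contain every Schubert cell of $\RM^\naive_{i,k}$.

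The first key point is that $\RM^{\naive,\loc}_i$ is stable under the action of the $\CO$-flat group scheme $\sG^\circ_i$ on $\RM^\naive_i$: its generic fiber action preserves $\RM^\naive_{i,F}$, so the scheme-theoretic image of $\sG^\circ_i \times_\CO \RM^{\naive,\loc}_i \to \RM^\naive_i$ lies in $\RM^{\naive,\loc}_i$ by a standard flatness argument. Via \eqref{sGisom} and the embedding $\RM^\naive_{i,k} \hookrightarrow \sFl_i$, the induced $\sG^\circ_{i,k}$-action on the special fiber coincides with the $L^+\CP^\circ_i$-action whose orbits are the Schubert cells of $\sFl_i$. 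Hence the special fiber of $\RM^{\naive,\loc}_i$ is necessarily a union of Schubert cells of $\RM^\naive_{i,k}$.

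Combining this with Corollary \ref{coro-bijection} and Proposition \ref{prop-liftm4}: $\RM^\naive_{i,k}$ contains exactly $\min\{i,n-i\}+4$ Schubert cells, each represented by some $(\CF^\ell_{i,d}, \CF^\ell_{-i,d})$, and each such representative lifts to an $\CO$- or $\CO_L$-point of $\RM^\naive_i$ whose generic fiber lies in $\RM^\naive_{i,F}$. Hence each representative lies in the special fiber of $\RM^{\naive,\loc}_i$, and the $\sG^\circ_i$-stability above forces the entire Schubert cell containing it to lie there. Summing over the finitely many cells yields $\RM^{\naive,\loc}_i = \RM^\naive_i$ topologically. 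The analogous statement for $\RM^\pm_i$ follows by the same argument: the lifts in Proposition \ref{prop-liftm4} factor through $\RM^\pm_i$, since $\RM^\pm_{i,F}$ is a connected component of $\RM^\naive_{i,F}$, $\RM^\pm_i$ is closed in $\RM^\naive_i$, and by Lemma \ref{lem-cells1} the $k$-point reductions also lie in $\RM^\pm_{i,k}$; and $\RM^\pm_i$ is itself $\sG^\circ_i$-stable by Lemma \ref{lemstable}.

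For the final equality of underlying topological spaces, Lemma \ref{lem-cells1} places each Schubert-cell representative in $\RM^+_{i,k}$, $\RM^-_{i,k}$, or their intersection; the $\sG^\circ_{i,k}$-stability of $\RM^\pm_{i,k}$ then propagates this to the whole cell, and Corollary \ref{coro-bijection} ensures these cells cover $\RM^\naive_{i,k}$. The main obstacle is the passage from the abstract $\sG^\circ_i$-action on $\RM^\naive_i$ to the concrete $L^+\CP^\circ_i$-action on $\sFl_i$ preserving Schubert cells; once this transfer of structure is cleanly in place, the rest of the argument reduces to organizational bookkeeping with the explicit lifts already constructed.
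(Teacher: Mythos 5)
Your proposal is correct and follows essentially the same route as the paper: lift the explicit cell representatives from Proposition \ref{prop-liftm4}, then use $\sG^\circ_i$-equivariance of the schematic closure (via the $L^+\CP^\circ_i$-action on Schubert cells) to conclude the closure contains the whole special fiber, with Lemma \ref{lem-cells1} giving the $\RM^+_i \cup \RM^-_i$ decomposition. The only difference is that you make explicit the group-equivariance step which the paper compresses into the phrase ``all points in $\RM^\naive_{i,k}$ can be lifted to the generic fiber''; this is a useful clarification, not a deviation.
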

\begin{proof}
    Since the points $(\CF^\ell_{i,d},\CF^\ell_{-i,d})$ are representatives for (all) Schubert cells in $\RM^\naive_{i,k}$ by Corollary \ref{coro-bijection}, it follows from Proposition \ref{prop-liftm4} that all points in $\RM^\naive_{i,k}$ can be lifted to the generic fiber. Hence, $\RM^\naive_i$ is topologically flat. 
    
    The assertion that $\RM^\naive_i=\RM^+_i\cup \RM^-_i$ follows easily from Lemma \ref{lem-cells1}.
\end{proof} 

\begin{defn}\label{defn-Mell}
    Let $\iota\colon \Lambda_i\ra \pi\inverse\Lambda_{-i}=\Lambda_{2n-i}$ denote the natural inclusion map (and its base change). 
     For an integer $\ell$, denote by $\RM^\pm_i(\ell)\sset \RM^\pm_{i,k}$ the locus where $\iota(\CF_i)$ has rank $\ell$.
\end{defn}


By standard properties of the rank function (cf. \cite[Proposition 16.18]{gortzwedhorn1}), each $\RM^\pm_i(\ell)$ is a locally closed subscheme of $\RM^\pm_{i,k}$, and the closure relation is governed by rank: $\RM^\pm_i(\ell')$ is contained in the Zariski closure $\ol{\RM^\pm_i(\ell)}$ if and only if $\ell'\leq \ell$.

Let $\sG_i$ denote the group scheme of similitude automorphisms of the self-dual lattice chain $\Lambda_{\cbra{i}}$. By Lemma \ref{lemstable}, the $\sG^\circ_{i,k}$-action preserves $\RM^\pm_{i,k}$. Since $\sG_i$ is, by definition, compatible with the map $\iota$, we have: for any $g\in \sG_{i,k}^\circ$, the submodules $\iota(\CF_i)$ and $\iota(g\CF_i)$ have the same rank. Hence, each stratum $\RM^\pm_i(\ell)$ is stable under $\sG^\circ_{i,k}$. Via the identification \eqref{sGisom}, the $L^+\CP_i^{\circ}$-action preserves $\RM^\pm_{i,k}\sset \sFl_i$. Thus, every $\RM^\pm_i(\ell)$ is a union of Schubert cells.

\begin{prop}\label{prop-stratificationMpm}
    There exists a stratification of the reduced special fiber \begin{flalign}
        (\RM^\pm_{i,k})_\red=\coprod_{\ell=\max\cbra{0,2i-n}}^i \RM^\pm_i(\ell), \label{stratification}
    \end{flalign}
    where the top stratum $\RM^\pm_i(i)$ decomposes into exactly two Schubert cells (of dimension $n(n-1)/2$), and each lower stratum $\RM^\pm_i(\ell)$ for $\max\cbra{0,2i-n}\leq \ell<i$ is a single Schubert cell. Consequently, $\RM^\pm_{i,k}$ has two irreducible components whose intersection is irreducible.
\end{prop}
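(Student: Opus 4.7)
The plan is to assemble the stratification from pieces already established in this section and close off one final point about the intersection of components.

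First, each locus $\RM^\pm_i(\ell)$ is locally closed in $\RM^\pm_{i,k}$, with $\overline{\RM^\pm_i(\ell)}=\bigcup_{\ell'\leq\ell}\RM^\pm_i(\ell')$, by the standard properties of the rank function cited in the paragraph preceding the proposition. To pin down the range of $\ell$, I would pass to the LM$1'$-LM$3'$ description: over the special fiber $\iota$ coincides with the reduction of $\lambda_2=\mathrm{diag}(I_i,0,I_i)$, whose image on $\CF_i$ is the projection $P_L(\CF_i)$ to $L\coloneqq k\langle\varepsilon_1,\ldots,\varepsilon_i,\varepsilon_{i^*},\ldots,\varepsilon_{2n}\rangle$. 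The constraint $\lambda_2\CF_i\subset\CF_{-i}=\CF_i^\perp$, combined with the fact that $L$ and the middle subspace $M\coloneqq k\langle\varepsilon_{i+1},\ldots,\varepsilon_{2n-i}\rangle$ are orthogonal under $\psi$, translates to $P_L(\CF_i)$ being totally isotropic for the nondegenerate split form that $\psi$ restricts to on $L$; hence $\dim P_L(\CF_i)\leq i$. The lower bound $\ell\geq\max\{0,2i-n\}$ follows from $\dim(\CF_i\cap\ker P_L)\leq\dim M=2n-2i$.

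Next, to count Schubert cells in each stratum, I would combine Corollary \ref{coro-bijection} (the $\min\{i,n-i\}+4$ Schubert cells with explicit representatives $(\CF^\ell_{i,d},\CF^\ell_{-i,d})$) with Lemma \ref{lem-cells1} (which assigns each representative to $\RM^+_i$ or $\RM^-_i$). A direct computation of $P_L(\CF^\ell_{i,d})$ places each representative in the rank-$\ell$ stratum, yielding exactly two Schubert cells in $\RM^\pm_i(i)$ (namely $d=1,2$ on the $+$ side and $d=3,4$ on the $-$ side) and a single Schubert cell in each lower stratum $\RM^\pm_i(\ell)$ with $\max\{0,2i-n\}\leq\ell<i$. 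Topological flatness (Corollary \ref{coro-topoflat}) forces $\dim(\RM^\pm_{i,k})_\red=n(n-1)/2$, so the top-dimensional cells are precisely those in $\RM^\pm_i(i)$; each such cell is an affine space, hence smooth of dimension $n(n-1)/2$, and their closures are the two irreducible components $V_1,V_2$ of $(\RM^\pm_{i,k})_\red$.

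The main obstacle is the irreducibility of $V_1\cap V_2$. I would deduce it from Proposition \ref{propiso}: topologically $\RM^\pm_i$ agrees with the flat, Cohen-Macaulay scheme $\RM^{\pm\loc}_i$ with reduced special fiber, so $(\RM^\pm_{i,k})_\red$ is itself a Cohen-Macaulay, equidimensional $k$-scheme. It is connected because the generic fiber $\OGr(n,2n)_\pm$ is connected and $\RM^{\pm\loc}_i$ is $\CO$-flat and projective. The Hartshorne-Grothendieck connectedness theorem for Cohen-Macaulay schemes then forces any two top-dimensional irreducible components to meet in pure codimension one, so $\dim(V_1\cap V_2)=n(n-1)/2-1$. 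On the other hand, $V_1\cap V_2$ is disjoint from the top Schubert cells $C^\pm_1,C^\pm_2$, hence contained in $\bigcup_{\ell<i}\RM^\pm_i(\ell)=\overline{\RM^\pm_i(i-1)}$, which is the closure of a single Schubert cell and therefore an irreducible Schubert variety of dimension at most $n(n-1)/2-1$. Matching dimensions forces $V_1\cap V_2=\overline{\RM^\pm_i(i-1)}$, which is irreducible, completing the proof.
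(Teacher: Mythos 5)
Your proof is correct and follows the paper's overall structure (rank stratification, identification of strata with Schubert cells via Corollary \ref{coro-bijection} and Lemma \ref{lem-cells1}, dimension via topological flatness), but the final step is handled by a genuinely different argument. The paper simply asserts that $\ol{C}_1\cap\ol{C}_2 = \RM^\pm_i(<i)$ and reads off irreducibility from the fact that $\RM^\pm_i(i-1)$ is a single Schubert cell; it does not explain why every lower stratum lies in the closure of \emph{both} top cells. You instead derive this from Proposition \ref{propiso}: since $\RM^{\pm\loc}_{i,k}$ is reduced and shares its topological space with $\RM^\pm_{i,k}$, you identify $(\RM^\pm_{i,k})_\red = \RM^{\pm\loc}_{i,k}$, which is Cohen--Macaulay and (by Stein factorization applied to the proper flat normal $\CO$-scheme $\RM^{\pm\loc}_i$) connected; Hartshorne's connectedness theorem then forces $V_1\cap V_2$ to have codimension one, and a dimension count against the Schubert variety $\ol{\RM^\pm_i(i-1)}$ pins it down. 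This is arguably more rigorous than the paper's phrasing, at the cost of invoking heavier machinery. Your linear-algebraic derivation of the range $\max\{0,2i-n\}\le\ell\le i$ (via the totally isotropic projection to $L$ and the dimension of $\ker P_L$) is also a nice direct alternative to the paper's route, which gets the bounds indirectly through the combinatorics of Lemma \ref{lem-orbits4}. One small point worth making explicit if you write this up: Cohen--Macaulayness is not a topological property, so you need the intermediate identification $(\RM^\pm_{i,k})_\red=\RM^{\pm\loc}_{i,k}$ (valid because $\RM^{\pm\loc}_i\hookrightarrow\RM^\pm_i$ is a closed immersion which is a homeomorphism and the target of its special fiber is reduced), not merely that the two schemes are homeomorphic.
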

\begin{proof}
    The rank stratification gives the decomposition \begin{flalign*}
        (\RM^\pm_{i,k})_\red=\coprod_{\ell\geq 0} \RM^\pm_i(\ell), 
    \end{flalign*}
    If the point $(\CF_{i,d}^\ell,\CF_{-i,d}^\ell)$ in Lemma \ref{lem-cells1} lies in $\RM^\pm_{i,k}$, then it lies in $\RM_i^\pm(\ell)$ by definition. Since $\RM^\pm(\ell)$ is a union of Schubert cells, the stated decomposition \eqref{stratification} follows from Corollary \ref{coro-bijection} and Lemma \ref{lem-cells1}. In particular, $\RM^\pm_i(\ell)$ is empty if $\ell<\max{\cbra{0,2i-n}}$ or $\ell>i$. 
    
    Write $\RM^\pm_i(i)=C_1\cup C_2$ as a union of Schubert cells. Since the top stratum $\RM^\pm_i(i)$ is open, each cell $C_j$ for $j=1,2$ is necessarily open. Hence, the closures $\ol{C}_1$  and $\ol{C}_2$ are irreducible components of $\RM^\pm_{i,k}$. The intersection $\ol{C}_1\cap\ol{C}_2$ is given by \begin{flalign*}
        \RM^\pm_i(<i)=\coprod_{\ell<i}\RM^\pm_i(\ell),
    \end{flalign*}
    which is irreducible, since the top stratum\footnote{Recall that we assume $i\neq 0,n$.} $\RM^\pm_i(i-1)$ is a single Schubert cell.
    
    By Corollary \ref{coro-topoflat}, we have $\dim \RM^\naive_i=\dim \RM^{\pm\loc}_i=n(n-1)/2$ (see Proposition \ref{propiso}). Hence, the dimension of the Schubert cell in $\RM^\pm_i(i)$ is also $n(n-1)/2$. 
\end{proof}

\begin{remark}
	If $i=0$ or $n$, then $\RM^\pm_{i,k}=(\RM^\pm_{i,k})_\red=\RM_{i,k}^\pm(i)$ is a single Schubert cell. 
\end{remark}

\subsection{General parahoric case} \label{subsec-genepara}
Now we let $I\sset [0,n]$  be any non-empty subset. By construction, for each $i\in I$, there is a natural $\sG_I$-equivariant  projection map \begin{flalign}
	  q_i\colon \RM^\naive_I\ra \RM^\naive_i,\ (\CF_j)_{j\in 2n\BZ\pm I}\mapsto (\CF_j)_{j\in 2n\BZ\pm \cbra{i}}.  \label{qiproj}
\end{flalign}
By the moduli description and the definition of the spin condition, we obtain the following.
\begin{lemma}\label{lem-intersection}
	The scheme $\RM^\pm_I$ equals the schematic intersection $\bigcap_{i\in I}q_i\inverse(\RM^\pm_i)$. Here, $q\inverse_i(\RM^\pm_i)$ denotes the pullback of $\RM^\pm_i$ along $q_i$. 
\end{lemma}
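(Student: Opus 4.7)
The plan is to verify the identity directly by unpacking the moduli descriptions of both sides. By Definition \ref{defn-spin}, the closed subscheme $\RM^\pm_I \subset \RM^\naive_I$ is cut out by imposing condition LM4$\pm$ on $\CF_{\Lambda_\ell}$ for every $\Lambda_\ell$ in the self-dual chain $\Lambda_I = \{\Lambda_\ell\}_{\ell \in 2n\BZ \pm I}$. On the other hand, by construction of the projection \eqref{qiproj}, the pullback $q_i^{-1}(\RM^\pm_i)$ is the closed subscheme of $\RM^\naive_I$ cut out by imposing LM4$\pm$ only on $\CF_{\Lambda_\ell}$ for $\ell \in 2n\BZ \pm \{i\}$. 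Thus the identity amounts to showing that the union, over $i \in I$, of the ``slices'' $\{\Lambda_\ell : \ell \in 2n\BZ \pm \{i\}\}$ exhausts the conditions LM4$\pm$ that define $\RM^\pm_I$.

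The one point that needs a moment's thought is that $\Lambda_I$ is an infinite (periodic) chain, whereas each $\RM^\pm_i$ only imposes LM4$\pm$ at the two representatives $\Lambda_{\pm i}$. The reconciliation is automatic from condition LM3 together with the compatibility of the $a$-eigenspace decomposition \eqref{Wdecomp} with the $F^\times$-action: since $(\wedge^n_\CO \pi^d \Lambda)_\pm = \pi^{nd} (\wedge^n_\CO \Lambda)_\pm$ inside $\wedge^n_F V$, the natural isomorphism $\wedge^n \Lambda \simto \wedge^n \pi^d \Lambda$ induced by scalar multiplication identifies the two eigen-sublattices. Combined with the LM3 identification $\CF_\Lambda \simeq \CF_{\pi \Lambda}$, this means that LM4$\pm$ at $\Lambda_\ell$ for $\ell = 2nd \pm i$ is equivalent to LM4$\pm$ at $\Lambda_{\pm i}$.

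Putting these together, the two families of conditions cut out the same closed subfunctor of $\RM^\naive_I$, proving the lemma. The main ``obstacle''---which is really just a bookkeeping check---is to ensure the argument gives a scheme-theoretic identity and not merely a set-theoretic one. This is immediate because LM4$\pm$ is a Zariski-local closed condition on the line $\wedge^n \CF_\Lambda$, and the periodicity identifications above are given by canonical isomorphisms of $\CO_S$-modules, so the defining equations match on the nose.
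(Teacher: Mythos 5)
Your argument is correct and matches the paper's own (very terse) justification: the paper simply states ``By the moduli description and the definition of the spin condition, we obtain the following,'' which is exactly the observation you unpack, namely that both sides of the identity are the closed subfunctor of $\RM^\naive_I$ cut out by imposing LM4$\pm$ at each $\Lambda_\ell$ for $\ell\in 2n\BZ\pm I$, split across $i\in I$ via the index decomposition $2n\BZ\pm I=\bigcup_{i\in I}(2n\BZ\pm\{i\})$. Your middle paragraph on the compatibility of the eigenspace lattices with $\pi$-scaling is a correct consistency remark about Definition \ref{defn-spin} but is not strictly needed for the lemma, since LM4$\pm$ is already imposed at every member of the periodic chain on both sides.
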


By Proposition \ref{prop-Jform}, the subset $I$ corresponds to a parahoric subgroup $P_I^\circ$ stabilizing a facet $\ff_I\sset \fa$. Let $J\sset \sI$ be the set of vertices in the closure $\ol{\ff}_I$. Concretely, \begin{flalign*}
	   J=J_0\cup J_1,
\end{flalign*}
where $J_0\coloneqq I\cap[2,n-2]$, and \begin{flalign*}
	J_1\coloneqq \begin{cases}
		\cbra{0,0'} \quad &\text{if $1\in I$ and $n-1\notin I$};\\ \cbra{n,n'} &\text{if $n-1\in I$ and $0\notin I$};\\ \cbra{0,0',n,n'} &\text{if $1,n-1\in I$}; \\ \emptyset &\text{if $1,n-1\notin I$}.
	\end{cases}
\end{flalign*}

By Corollary \ref{coro-inWcirc}, any Schubert cell in $\RM^\pm_{I,k}$ is of the form $C_w$ for $w\in\wt{W}^\circ$. Denote by \begin{flalign}
	\Perm^\pm\coloneqq \Perm^\pm_I \sset \wt{W}^\circ \label{perm46}
\end{flalign}  the subset consisting of $w$ such that $C_w$ lies in $\RM^\pm_{I,k}$. Denote by \begin{flalign*}
        \Adm(\mu_\pm)\coloneqq \tcbra{w\in\wt{W}^\circ\ |\ w\leq \sigma t^{\mu_\pm}\sigma\inverse \text{\ for some $\sigma\in W^\circ$} }
        \end{flalign*}
    the $\mu_\pm$-admissible subset.

\begin{lemma} \label{lem-perm-admi}
	For any $i\in I$, we have \begin{flalign*}
		W_i\backslash \Perm_{i}^\pm/W_i = W_i\backslash \Adm(\mu_\pm)/W_i.
	\end{flalign*}
\end{lemma}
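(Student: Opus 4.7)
The strategy is to identify both sides with the set of Schubert cells in the underlying topological space of the reduced special fiber of the schematic local model $\RM^{\pm\loc}_i$. First, by Corollary \ref{coro-topoflat}, the spin local model $\RM^\pm_i$ is topologically flat, so the underlying topological space of $\RM^\pm_{i,k}$ agrees with that of its flat closure $\RM^{\pm\loc}_{i,k}$. Since both are $L^+\CP_i^\circ$-stable closed subschemes of $\sFl_i$, Proposition \ref{lem-cell} together with Corollary \ref{coro-inWcirc} identifies the double coset space $W_i \backslash \Perm_i^\pm / W_i$ with the set of Schubert cells contained in $\RM^{\pm\loc}_{i,k}$.

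Second, by Proposition \ref{propiso}, $\RM^{\pm\loc}_i$ is isomorphic to the Pappas--Zhu (equivalently Scholze--Weinstein) schematic local model attached to the pair $(\sG_i^\circ, \mu_\pm)$. The standard description of the special fiber of such a local model for a minuscule cocharacter asserts that, as a closed subscheme of $\sFl_i$, it coincides set-theoretically with the $\mu_\pm$-Schubert variety, namely the reduced closure of the $L^+\CP_i^\circ$-orbit of $t^{\mu_\pm}$. By the Kottwitz--Rapoport-type description of admissible sets in its parahoric form, the Schubert cells of this variety are indexed precisely by $W_i \backslash \Adm(\mu_\pm) / W_i$. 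Combining these two identifications yields the lemma.

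The main potential obstacle is bookkeeping: one must verify that the minuscule cocharacter $\mu_\pm$ matches correctly on both sides (taking account of the two connected components of the orthogonal Grassmannian, which correspond to the sign $\pm$), and that the Kottwitz--Rapoport-type identification of the admissible set with Schubert cells is stated correctly for the parahoric level $W_i$ attached to the (possibly non-special, non-maximal) vertex $a_i$. In particular, for the non-hyperspecial vertices $2 \leq i \leq n-2$ one needs to track the refinement $W' \subset \wt{W}^\circ$ from Definition \ref{defnWprime}, and check that $\Adm(\mu_\pm)$ lies in $W'$ (up to left/right multiplication by $W_i$). These are however standard facts in the theory of local models; the substantive new input is Corollary \ref{coro-topoflat} from the pseudo-maximal analysis carried out in \S\ref{subsec-pseud}.
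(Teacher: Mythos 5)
Your proposal is correct and follows essentially the same route as the paper: combine the topological flatness established in Corollary \ref{coro-topoflat} (so $\RM^\pm_i$ and its flat closure $\RM^{\pm\loc}_i$ share a topological space) with Proposition \ref{propiso} identifying $\RM^{\pm\loc}_i$ as the Pappas--Zhu schematic local model, whose special fiber is known to be the union of Schubert cells indexed by $W_i\backslash\Adm(\mu_\pm)/W_i$. The paper cites this last fact to \cite[Theorem 1.1]{pappas2013local}; your added remarks about tracking $W'\subset\wt{W}^\circ$ and matching the sign $\pm$ of the cocharacter to the component of $\OGr(n,2n)$ are sensible sanity checks but not a divergence from the paper's argument.
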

\begin{proof}
%
	By Proposition \ref{prop-i0n} and Corollary \ref{coro-topoflat}, we obtain that $\RM^\naive_i$ and $\RM^{\pm\loc}_i$ (see Definition \ref{Mlocdefin}) have the same topological space. By Proposition \ref{propiso} and a well-known fact (see e.g. \cite[Theorem 1.1]{pappas2013local}) about the schematic local model, we obtain the lemma. 
\end{proof}

For $i\in I\cup J$, there is an obvious projection map \begin{flalign*}
	\rho_i\colon W_I\backslash \wt{W}^\circ/W_I\ra W_i\backslash \wt{W}^\circ/W_i.
\end{flalign*}
Write $\Adm_i(\mu_\pm)\coloneqq W_i\backslash \Adm(\mu_\pm)/W_i$. We also use $\Perm_i^\pm$ to denote $W_i\backslash \Perm_i^\pm/W_i$ by abuse of notation. Since $q_i$ is $\sG_I$-equivariant, we have \begin{flalign}
	W_I\backslash\Perm^\pm_I/W_I =\bigcap_{i\in I} \rho_i\inverse(\Perm_i^\pm)  \label{eq48}
\end{flalign}
by Lemma \ref{lem-intersection}.
Recall the following vertexwise criterion for the admissible sets.
\begin{thm}
	[{\cite[Theorem 1.5]{haines2017vertexwise}}] \label{thm-vertexcri}
	\begin{flalign*}
		W_I\backslash \Adm(\mu_\pm)/W_I = \bigcap_{j\in J}\rho_j\inverse (\Adm_j(\mu_\pm)).
	\end{flalign*}
\end{thm}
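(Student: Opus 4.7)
The plan is to adapt Haines and He's original strategy, relating admissibility to permissibility via the geometry of the Bruhat--Tits building.

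The inclusion $W_I \backslash \Adm(\mu_\pm) /W_I \subseteq \bigcap_{j \in J} \rho_j^{-1}(\Adm_j(\mu_\pm))$ is formal: if $w \leq \sigma t^{\mu_\pm} \sigma^{-1}$ in the Bruhat order on $\wt{W}^\circ$ for some $\sigma \in W^\circ$, then the induced map $\wt{W}^\circ \to W_j \backslash \wt{W}^\circ / W_j$ is compatible with the Bruhat orders on double coset spaces, so $\rho_j([w]) \in \Adm_j(\mu_\pm)$ for every $j \in J$.

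For the reverse inclusion, the plan is to invoke the Kottwitz--Rapoport identification $\Adm(\mu_\pm) = \Perm(\mu_\pm)$, which holds because $\mu_\pm$ are minuscule. Here
\[
\Perm(\mu_\pm) = \bigl\{ w \in \wt{W}^\circ : w(a) - a \in \mathrm{conv}(W^\circ \mu_\pm) \text{ for every vertex } a \text{ of } \fa \bigr\}.
\]
By Lemma \ref{lem-Wmperm}, $W_j$ fixes $a_j$, and its linear part lies in $W^\circ$, which stabilizes the polytope $\mathrm{conv}(W^\circ \mu_\pm)$. Hence membership of a double coset $[w]$ in $\Adm_j(\mu_\pm) = \Perm_j(\mu_\pm)$ is equivalent to the single vector condition $w(a_j) - a_j \in \mathrm{conv}(W^\circ \mu_\pm)$, well-defined on the double coset. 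The right-hand side of the theorem is therefore exactly the set of $[w]$ satisfying $w(a_j) - a_j \in \mathrm{conv}(W^\circ \mu_\pm)$ for every $j \in J$.

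The crux, and the main obstacle, is to show that these vertex conditions at $J$ force the existence of a representative $w' \in [w]$ for which $w'(a) - a \in \mathrm{conv}(W^\circ \mu_\pm)$ at \emph{every} vertex $a$ of $\fa$, including those lying outside $\overline{\ff_I}$. The Haines--He argument handles this by exploiting the $W_I$-ambiguity to rectify the conditions at vertices outside $\overline{\ff_I}$: the action of $W_I$ permutes such vertices in a controlled way, and the polytope $\mathrm{conv}(W^\circ \mu_\pm)$ is cut out by explicit root-hyperplane inequalities, so a convexity-plus-permutation argument propagates the constraints from $J$ to the remaining vertices. In the concrete setting of this paper, one might alternatively attempt a direct verification using the explicit description of $\wt{W}^\circ$ in \eqref{Wcirc} together with the combinatorial description of $W^\circ \cdot \mu_\pm$ as $n$-element subsets of $[1, 2n]$ with prescribed parity.
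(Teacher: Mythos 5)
The paper does not prove Theorem \ref{thm-vertexcri}; it imports the statement verbatim as \cite[Theorem 1.5]{haines2017vertexwise}, so there is no in-paper argument to compare your sketch against.

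More substantively, your proposed route has a genuine gap. In the second step you invoke the Kottwitz--Rapoport identity $\Adm(\mu_\pm)=\Perm(\mu_\pm)$, asserting it ``holds because $\mu_\pm$ are minuscule.'' This is false for split $\GO_{2n}$ in the range considered here ($n\geq 4$), and that failure is precisely the phenomenon driving this paper. The $\mu_\pm$-admissible set is strictly smaller than the Kottwitz--Rapoport permissible set in type $D$; the gap between them is exactly what the spin condition detects, and Smithling's $\pm$-permissibility was introduced in \cite{smithling2011topological} to close it. If $\Adm=\Perm$ did hold, the naive local model would already behave correctly and the spin refinement of the moduli problem would be unnecessary. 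Consequently, even if your $W_I$-ambiguity-plus-convexity argument succeeds in propagating the polytope conditions $w(a_j)-a_j\in\mathrm{conv}(W^\circ\mu_\pm)$ from the vertices in $J$ to all vertices of $\fa$, this only places $[w]$ in $\Perm$, not in $\Adm$, and the reverse inclusion does not follow.

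The Haines--He proof never passes through permissibility. Their argument is an intrinsic statement about admissible sets in the extended affine Weyl group, established via alcove geometry and properties of the Bruhat order (acute cones, the ``gate'' property of facets), and is uniform across all types --- which is essential precisely because of the type-$D$ failure of $\Adm=\Perm$. A self-contained proof in the present setting would have to replace the naive $\Perm$ polytope with the spin-permissibility constraints before attempting any propagation from $J$ to the remaining vertices.
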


It follows that \begin{flalign*}
	 W_I\backslash\Perm^\pm_I/W_I &=\bigcap_{i\in I} \rho_i\inverse(\Perm_i^\pm) \text{\quad (by \eqref{eq48})}  \\ &=\bigcap_{i\in I}\rho_i\inverse(\Adm_i(\mu_\pm))  \text{\quad (by Lemma \ref{lem-perm-admi})} \\ &=\bigcap_{j\in J}\rho_j\inverse(\Adm_j(\mu_\pm)) \text{\quad (by Theorem \ref{thm-vertexcri})}.
\end{flalign*}  
By Theorem \ref{thm-vertexcri} again, the right-hand side is equal to $W_I\backslash\Adm(\mu_\pm)/W_I$. We immediately obtain the following corollary.

\begin{corollary}\label{coro-mainresults}
	The spin local model $\RM^\pm_I$ is topologically flat over $\CO$ for any non-empty $I\sset [0,n]$. 
\end{corollary}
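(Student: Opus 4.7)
The plan is to reduce topological flatness of $\RM^\pm_I$ for an arbitrary non-empty $I\sset [0,n]$ to the pseudo-maximal case Theorem \ref{intro-thmpseumax}(1), via a combinatorial identification of the Schubert cells appearing in the special fiber. All three ingredients required are already in place in the paper.

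First, from the moduli descriptions (Definitions \ref{defn-naive} and \ref{defn-spin}), the spin condition LM4$\pm$ is imposed separately at each member of the chain, so that Lemma \ref{lem-intersection} realizes $\RM^\pm_I=\bigcap_{i\in I}q_i\inverse(\RM^\pm_i)$ as a schematic intersection. Passing to special fibers embedded in $\sFl_I$, this yields the corresponding identity \eqref{eq48} on the parameterizing sets of Schubert cells:
\[
W_I\backslash \Perm^\pm_I/W_I \;=\; \bigcap_{i\in I}\rho_i\inverse(\Perm^\pm_i).
\]

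Second, by Corollary \ref{coro-topoflat} each pseudo-maximal $\RM^\pm_i$ is topologically flat, and therefore shares its special fiber (as a topological space) with the flat closure $\RM^{\pm\loc}_i$. By Proposition \ref{propiso}, the latter is a Pappas--Zhu local model, whose special-fiber Schubert cells are precisely those indexed by the $\mu_\pm$-admissible subset. This is the content of Lemma \ref{lem-perm-admi}, giving $\Perm^\pm_i=\Adm_i(\mu_\pm)$ on $W_i$-double cosets. Substituting converts the right-hand side above into $\bigcap_{i\in I}\rho_i\inverse(\Adm_i(\mu_\pm))$.

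Third, apply the Haines--He vertexwise criterion Theorem \ref{thm-vertexcri} to the facet $\ff_I$, whose vertex set $J\sset \sI$ was described before \eqref{perm46}. This yields $W_I\backslash \Adm(\mu_\pm)/W_I = \bigcap_{j\in J}\rho_j\inverse(\Adm_j(\mu_\pm))$. A small bookkeeping step is required to see that the intersection over $i\in I$ agrees with the intersection over $j\in J$: for $i\in I\cap\{2,\ldots,n-2\}$ the vertex $a_i$ is a single element of $\sI$ and both sides contain the same term, while for $i\in\{1,n-1\}\cap I$ the vertex set of $\ov{\ff}_{\{i\}}$ is $\{0,0'\}$ or $\{n,n'\}$ and a second application of Theorem \ref{thm-vertexcri} to the sub-facet $\ov{\ff}_{\{i\}}$ decomposes $\rho_i\inverse(\Adm_i(\mu_\pm))$ into the joint conditions at these two hyperspecial vertices. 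Combining, $\Perm^\pm_I=\Adm(\mu_\pm)$ on $W_I$-double cosets. Since $\RM^{\pm\loc}_I\hookrightarrow \RM^\pm_I$ is a closed immersion with the same generic fiber and, by Proposition \ref{propiso}, with special-fiber cells indexed by the same set $W_I\backslash\Adm(\mu_\pm)/W_I$, it is a set-theoretic equality; the $\CO$-flatness of $\RM^{\pm\loc}_I$ then forces $\RM^\pm_{I,F}$ to be dense in $\RM^\pm_I$.

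The only non-formal step is the bookkeeping in the third paragraph: matching the lattice-theoretic indexing $I\sset [0,n]$ used to define the projections $q_i$ with the vertex-theoretic indexing $J\sset \sI$ of the Bruhat--Tits facet, at the two points where they disagree. This is a routine double application of Haines--He and does not introduce new ideas; the substantive content of the theorem lies entirely in the pseudo-maximal case Theorem \ref{intro-thmpseumax}(1) and in the vertexwise criterion itself.
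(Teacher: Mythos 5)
Your proposal is correct and follows essentially the same route as the paper: reduce to the pseudo-maximal case via Lemma \ref{lem-intersection} and \eqref{eq48}, convert $\Perm_i^\pm$ to $\Adm_i(\mu_\pm)$ using Lemma \ref{lem-perm-admi}, and apply the Haines--He vertexwise criterion (Theorem \ref{thm-vertexcri}) to collapse the intersection back to $W_I\backslash\Adm(\mu_\pm)/W_I$. Your explicit unpacking of the index-set bookkeeping (translating $I\sset[0,n]$ to $J\sset\sI$, with the extra step at $i\in\{1,n-1\}$ handled by a second application of Theorem \ref{thm-vertexcri} to the sub-facet $\ov{\ff}_{\{i\}}$) is precisely what the paper's single line ``by Theorem \ref{thm-vertexcri}'' is tacitly invoking, and spelling it out is a genuine improvement in exposition; just note that the trivial cases $i\in\{0,n\}\cap I$, where $a_i$ is itself a vertex in $\sI$, should be listed alongside $I\cap[2,n-2]$ for completeness.
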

\begin{proof}
	By the previous discussion, the Schubert cells in $\RM^\pm_{I,k}$ are indexed by the admissible set $W_I\backslash\Adm(\mu_\pm)/W_I$. In particular, $\RM^\pm_I$ and $\RM^{\pm\loc}_I$ have the same topological space, and hence $\RM^\pm_I$ is topologically flat. 
\end{proof}

By Remark \ref{rmk-toI}, this completes the proof of Theorem \ref{intro-thmtopo}.


\end{document}